\documentclass[12pt,a4paper, oneside, bold,secthm,seceqn,amsthm,ussrhead,reqno]{article}
\usepackage[utf8]{inputenc}
\usepackage[english]{babel}
\usepackage[symbol]{footmisc}
\usepackage{amssymb,amsmath,amsthm,amsfonts,xcolor,enumerate,hyperref, comment,longtable, cleveref}
\usepackage{verbatim}
\usepackage{times}
\usepackage{cite}
\usepackage{pdflscape}
\usepackage[normalem]{ulem}
\usepackage[mathcal]{euscript}
\usepackage{tikz}
\usepackage{hyperref}
 
\usepackage{cancel}
\usepackage{stmaryrd}
 
\usepackage{amsfonts}
\usepackage{amssymb}
\usepackage{times}
\usepackage{xcolor}
\usepackage{tkz-graph}
\usepackage{url}
\usepackage{float}
\usepackage{tasks}
\usepackage{array}

\usepackage{cite}
\usepackage{hyperref}

 \usepackage{fancyhdr} 
\usepackage{amsfonts}
\usepackage{amsmath}
\usepackage{eurosym}
\usepackage{geometry}

\usepackage{caption,booktabs}

\theoremstyle{plain}
\newtheorem{theorem}{Theorem}
\newtheorem{lemma}[theorem]{Lemma}
\newtheorem{proposition}[theorem]{Proposition}
\newtheorem{remark}[theorem]{Remark}

\newcommand{\unnumberedsection}[2]{\phantomsection\section*{#1}\addcontentsline{toc}{section}{#1}\label{#2}}

\usetikzlibrary{arrows}

\newtheorem*{theoremA}{Theorem A}
\newtheorem*{theoremB}{Theorem B}
\newtheorem*{corollaryA}{Corollary A}
\newtheorem*{corollaryB}{Corollary B}
\usepackage{fouriernc}
\usepackage{mathalfa}
\usepackage{adjustbox}

\usepackage{graphicx}
\usetikzlibrary{arrows.meta,backgrounds}
\begin{document}
 
 \bigskip

\noindent{\Large
Degenerations of Jordan algebras}\footnote{
The work is supported by
FCT 2023.08031.CEECIND and UID/00212/2025.}\footnote{The original concept, theoretical framework, and primary results were developed by V.B., I.K., Y.P., A.S. and Y.V.; 
while Aristotle (https://aristotle.harmonic.fun/)   acted as a non-human research assistant, assisting in error verification and proof simplification.}

 \bigskip

\begin{center}

 {\bf
   Victor Bovdi\footnote{United Arab Emirates University, UAE; \ vbovdi@gmail.com}, 
   Ivan Kaygorodov\footnote{CMA-UBI, University of  Beira Interior, Portugal; \    kaygorodov.ivan@gmail.com},
   Yury Popov\footnote{Independent scientist; \ yuri.ppv@gmail.com}, \\
   Azamat Saydaliyev\footnote{V.\,I.~Romanovskiy Institute of Mathematics, Uzbekistan; \ azamatsaydaliyev6@gmail.com} \&
   Yury Volkov\footnote{Independent scientist; \ wolf86\_666@list.ru}
}

\end{center}

\ 

\noindent {\bf Abstract:}
{\it
We describe all degenerations of complex $4$-dimensional Jordan algebras and of complex $5$-dimensional nilpotent Jordan algebras.
The proofs do not use other papers on degenerations, and all degenerations and non-degenerations were formally verified in the proof assistant Lean~4. The graph of primary degenerations of $4$-dimensional Jordan algebras has $73$ vertices and $160$ edges.
The variety of $5$-dimensional nilpotent Jordan algebras is stratified into $63$ normal forms, five of which are one- or two-parameter families; together with two special members of a family this gives $65$ objects, and the graph of primary degenerations has $154$ edges.
In particular, we describe all irreducible components of the corresponding varieties: the variety of $4$-dimensional Jordan algebras has $10$ irreducible components, and the variety of $5$-dimensional nilpotent Jordan algebras has dimension $22$ and $5$ irreducible components.}

 \bigskip 

\noindent {\bf Keywords}:
{\it
Jordan algebra,
nilpotent algebra,
degeneration,
irreducible component,
formal verification.}

\bigskip 

 \
 
\noindent {\bf MSC2020}:
17C55 (primary);
17C10,
14D06,
14L30,
68V15 (secondary).

	 \bigskip

\ 

\

\tableofcontents 
\newpage
\unnumberedsection{Introduction}{sec_intro}
Deformations and geometric properties of a variety of algebras defined by a family of polynomial identities have been an object of study since the 1970's, see \cite{akt25,ben,fkkv,fks-sl, gkp, GRH,GRH2,GRH3,BB14, BC99, KS07, KE14, km18, M79,M80,S90,kv19,klp20}  and references in \cite{k23,ls,MS}. 
In particular, there are many results concerning degenerations of algebras of low dimensions in a variety defined by a set of identities.
One of important problems in this direction is the description of so-called rigid algebras. These algebras are of big interest, since the closures of their orbits under the action of generalized linear group form irreducible components of a variety under consideration
(with respect to the Zariski topology). For example, the rigid algebras were classified in the varieties of
low-dimensional associative (see, \cite{M79,M80}) and Jordan (see, \cite{KS07,KE14,km18}) algebras.
There are fewer works in which the full information about degenerations was found for some variety of algebras.

  \medskip

The Jordan algebras appeared as one interesting tool for studies in quantum mechanics in
the paper of Jordan, von Neumann and Wigner \cite{jnw}.
A commutative algebra $A$ is called a {\it  Jordan  algebra}  if it satisfies the following identity
$$\big((xx)y\big)x=(xx)(yx).$$ 
The study of the structure theory and other properties of Jordan algebras was initiated by Albert in \cite{albert}.  Due to the wide popularity of Jordan algebras, we do not present the main results known about this variety and suggest only to get acquainted with the works citing the articles of \cite{jnw} and \cite{albert}.

  \medskip

In this paper we give the full information about degenerations of $4$-dimensional Jordan algebras (Section~\ref{sec4}) and of $5$-dimensional nilpotent Jordan algebras (Section~\ref{sec5}).
More precisely, in both cases we construct the graph of primary degenerations. 
The vertices of this graph are the isomorphism classes of algebras (and, in dimension $5$, the families of algebras) in the variety under consideration.
An algebra $A$ degenerates to an algebra $B$ if and only if there is a path from the vertex corresponding to $A$ to the vertex corresponding to $B$.
Also we describe rigid algebras and irreducible components in the varieties of algebras under consideration (Corollaries~A and~B).

\subsection*{Previous results}
The present paper relies only on the algebraic classifications of $4$-dimensional Jordan algebras and of $5$-dimensional nilpotent Jordan algebras. Let us briefly recall which types of results on these (and smaller) Jordan algebras were obtained before.
\begin{itemize}
\item {\it Algebraic classification.} $4$-dimensional nilpotent Jordan algebras were classified in \cite{contr11}, and all $4$-dimensional Jordan algebras over an algebraically closed field were described in \cite{meg}. This list (Table~1 below) is the starting point of Section~\ref{sec4}. The $5$-dimensional nilpotent Jordan algebras were classified by Hegazi and Abdelwahab \cite{ha} (the associative ones, i.e. the commutative associative nilpotent algebras, go back to Mazzola \cite{M80}); this list (Table~4 below, see Remark~\ref{rem:ha}) is the starting point of Section~\ref{sec5}.
\item {\it Geometric classification}, i.e. the description of the rigid algebras and of the irreducible components of the variety: for $3$-dimensional Jordan algebras in \cite{KS07}, for $4$-dimensional Jordan algebras in \cite{KE14} and for $5$-dimensional nilpotent Jordan algebras in \cite{km18}; see also \cite{gkm}.
\item {\it Degenerations of $3$-dimensional Jordan algebras} \cite{gkp}; in particular, a degeneration $\mathfrak{A}\to\mathfrak{B}$ of $3$-dimensional algebras gives the degeneration $\mathfrak{A}\oplus\mathfrak{D}\to\mathfrak{B}\oplus\mathfrak{D}$ of $4$-dimensional algebras.
\item {\it Degenerations of nilpotent algebras} of small dimension, in particular of $4$-dimensional nilpotent Jordan algebras \cite{fkkv}, and of nilpotent commutative associative algebras \cite{klp20}.
\item Degenerations of Jordan superalgebras \cite{ahk19}, irreducible components of the varieties of Jordan superalgebras \cite{fks-js}, and the algebraic and geometric classification of noncommutative Jordan algebras \cite{aak25}.
\item General methods for proving degenerations and non-degenerations \cite{GRH,GRH2,GRH3,BC99,S90}; see also the surveys \cite{MS,k23}.
\end{itemize}
Unfortunately, some of the results of the papers \cite{contr11,gkp,KS07,KE14} on degenerations of $3$- and $4$-dimensional Jordan algebras are wrong or can not be applied to $4$-dimensional algebras (see Remark~\ref{rem_papers}).
For this reason we do not use the results of other papers on degenerations in the proof of our Theorem~A: we give a complete description of degenerations and non-degenerations of $4$-dimensional Jordan algebras ``from scratch''. Every primary degeneration is given by an explicit parametrized basis (Table~2), every primary non-degeneration is proved by an explicit closed set (Table~3), and all these computations were verified in the proof assistant Lean~4.
Theorem~A in particular gives a new, complete proof of the geometric classification of $4$-dimensional Jordan algebras (Corollary~A).
In the same way, for $5$-dimensional nilpotent Jordan algebras every primary degeneration is given by an explicit parametrized basis (Table~5) and every primary non-degeneration by an explicit closed set (Table~6), and all these computations were also verified in Lean~4; Theorem~B gives the complete graph of degenerations, which was not known before: \cite{km18} gives the degenerations needed to determine the irreducible components, but not the closure order between all algebras. Corollary~B agrees with the result of Kashuba and Martin \cite{km18} (see Remark~\ref{rem:km}).

\medskip

\noindent{\it Formalization.} All results of this paper were formally verified in the proof assistant Lean~4 with the Mathlib library; in the formal proofs no result of other papers on degenerations is used. Namely, for both Section~\ref{sec4} and Section~\ref{sec5} it is formally checked that every multiplication table of Tables~1 and~4 (for the families: for all values of the parameters) defines a commutative algebra satisfying the Jordan identity; that every parametrized basis of Tables~2 and~5 gives the stated degeneration; that every non-degeneration of Tables~3 and~6 holds (for the sets $\mathcal{R}$: $\mathcal{R}$ is closed, stable under the Borel subgroup, contains the structure of the first algebra and does not meet the orbits of the second ones, the last condition by explicit Nullstellensatz certificates on the Bruhat cells of ${\rm GL}(\mathbb{V})$; for the polynomial identities and ${\rm Ann}$: directly); the dimensions of the algebras of derivations in Table~1; and the general principles (Lemma~\ref{lem_inv}, transitivity of degenerations, passage from primary degenerations and non-degenerations to the whole graph). The algebraic classifications (Lemmas~\ref{lem_table} and~\ref{lem_table5}) and the classical Lemma~\ref{lem_der} are used as known results.

\section{Preliminaries}\label{prelim}

All spaces in this paper are considered over $\mathbb{C}$, and we write simply ${\rm dim}$, ${\rm Hom}$ and $\otimes$ instead of ${\rm dim}_{\mathbb{C}}$, ${\rm Hom}_{\mathbb{C}}$ and $\otimes_{\mathbb{C}}$. 
Given an $n$-dimensional vector space $\mathbb{V}$, the set ${\rm Hom}(\mathbb{V} \otimes \mathbb{V},\mathbb{V}) \cong \mathbb{V}^* \otimes \mathbb{V}^* \otimes \mathbb{V}$ 
is a vector space of dimension $n^3$. This space has a structure of the affine variety $\mathbb{C}^{n^3}.$ Indeed, let us fix a basis $e_1,\dots,e_n$ of $\mathbb{V}$. Then any $\mu\in {\rm Hom}(\mathbb{V} \otimes \mathbb{V},\mathbb{V})$ is determined by $n^3$ structure constants $c_{i,j}^k\in\mathbb{C}$ such that
$\mu(e_i\otimes e_j)=\sum\limits_{k=1}^nc_{i,j}^ke_k$. A subset of ${\rm Hom}(\mathbb{V} \otimes \mathbb{V},\mathbb{V})$ is {\it Zariski-closed} if it can be defined by a set of polynomial equations in the variables $c_{i,j}^k$ ($1\le i,j,k\le n$).

Let $\mathfrak{Jord}_n$ be the set of all Jordan algebra structures on $\mathbb{V}$; it is a Zariski-closed subset of ${\rm Hom}(\mathbb{V} \otimes \mathbb{V},\mathbb{V})$. 
The general linear group ${\rm GL}(\mathbb{V})$ acts on $\mathfrak{Jord}_n$ by conjugations:
$$ (g * \mu )(x\otimes y) = g\mu(g^{-1}x\otimes g^{-1}y)$$ 
for $x,y\in \mathbb{V}$, $\mu\in \mathfrak{Jord}_n$ and $g\in {\rm GL}(\mathbb{V})$.
Thus, $\mathfrak{Jord}_n$ is decomposed into ${\rm GL}(\mathbb{V})$-orbits that correspond to the isomorphism classes of algebras. 
Let ${\mathcal O}(\mu)$ denote the orbit of $\mu\in \mathfrak{Jord}_n$ under the action of ${\rm GL}(\mathbb{V})$ and $\overline{{\mathcal O}(\mu)}$ denote the Zariski closure of ${\mathcal O}(\mu)$.

Let $A$ and $B$ be two $n$-dimensional Jordan algebras and let $\mu,\lambda \in \mathfrak{Jord}_n$ represent $A$ and $B$ respectively.
We say that $A$ degenerates to $B$ and write $A\to B$ if $\lambda\in\overline{{\mathcal O}(\mu)}$.
Note that in this case we have $\overline{{\mathcal O}(\lambda)}\subset\overline{{\mathcal O}(\mu)}$. 
Hence, the definition of a degeneration does not depend on the choice of $\mu$ and $\lambda$. 
If $A\not\cong B$, then the assertion $A\to B$ is called a {\it proper degeneration}. We write $A\not\to B$ if $\lambda\not\in\overline{{\mathcal O}(\mu)}$.

Let $A$ be represented by $\mu\in \mathfrak{Jord}_n.$ Then  $A$ is  {\it rigid} in $\mathfrak{Jord}_n$ if ${\mathcal O}(\mu)$ is an open subset of $\mathfrak{Jord}_n$.
 Recall that a subset of a variety is called irreducible if it can't be represented as a union of two non-trivial closed subsets. A maximal irreducible closed subset of a variety is called {\it irreducible component}.
 In particular, $A$ is rigid in $\mathfrak{Jord}_n$ iff $\overline{{\mathcal O}(\mu)}$ is an irreducible component of $\mathfrak{Jord}_n$. 
 It is well known that any affine variety can be represented as a finite union of its irreducible components in a unique way. We denote by $Rig(\mathfrak{Jord}_n)$ the set of rigid algebras in $\mathfrak{Jord}_n.$

We denote by ${\rm Ann}(A)$ the annihilator of $A$, by $A^2$ the square of $A$ and by ${\rm Der}(A)$ the algebra of derivations of $A$.
Given spaces $\mathbb{U}$ and $\mathbb{W}$, we write simply $\mathbb{U}>\mathbb{W}$ instead of ${\rm dim}\,\mathbb{U}>{\rm dim}\,\mathbb{W}$.

\begin{lemma}\label{lem_der}
If $A\to B$ is a proper degeneration, then ${\rm Der}(A)<{\rm Der}(B)$.
\end{lemma}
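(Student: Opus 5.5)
The plan is to compare orbit dimensions and stabilizer dimensions, using the standard facts about algebraic group actions. Let $\mu,\lambda\in\mathfrak{Jord}_n$ represent $A$ and $B$, and let $G={\rm GL}(\mathbb{V})$, so $\dim G=n^2$. The stabilizer of $\mu$ is $G_\mu={\rm Aut}(A)$, a closed subgroup of $G$. Its Lie algebra is ${\rm Der}(A)$: differentiate the condition $g\mu(g^{-1}x\otimes g^{-1}y)=\mu(x\otimes y)$ along $g=1+\varepsilon D$. In characteristic zero the scheme-theoretic stabilizer is reduced (Cartier), so $\dim {\rm Aut}(A)=\dim{\rm Der}(A)$. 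Hence
$$\dim\mathcal O(\mu)=n^2-\dim{\rm Der}(A),\qquad \dim\mathcal O(\lambda)=n^2-\dim{\rm Der}(B).$$
It therefore suffices to show $\dim\mathcal O(\lambda)<\dim\mathcal O(\mu)$.

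For that, I will use the classical fact that an orbit $\mathcal O(\mu)$ of an algebraic group acting on a variety is open in its closure $\overline{\mathcal O(\mu)}$. This holds because the orbit is constructible by Chevalley, so it contains a dense open subset $U$ of its closure, and $\mathcal O(\mu)=\bigcup_{g\in G} gU$ is then open. Consequently $\overline{\mathcal O(\mu)}\setminus\mathcal O(\mu)$ is a closed, $G$-stable proper subset of the irreducible variety $\overline{\mathcal O(\mu)}$; irreducibility follows since $G$ is connected. Every irreducible component of this boundary therefore has dimension strictly less than $\dim\overline{\mathcal O(\mu)}=\dim\mathcal O(\mu)$.

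Now suppose $A\to B$ is proper. Then $\lambda\in\overline{\mathcal O(\mu)}$ and $\lambda\notin\mathcal O(\mu)$, since $A\not\cong B$. So $\mathcal O(\lambda)$ lies in the boundary, and
$$\dim\mathcal O(\lambda)\le\dim\big(\overline{\mathcal O(\mu)}\setminus\mathcal O(\mu)\big)<\dim\mathcal O(\mu).$$
Substituting the orbit dimension formulas gives $\dim{\rm Der}(A)<\dim{\rm Der}(B)$.

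The only genuinely delicate step is identifying $\dim{\rm Aut}(A)$ with $\dim{\rm Der}(A)$, that is, the smoothness of stabilizers over $\mathbb C$. If I want to avoid quoting Cartier's theorem, I would argue directly as follows. The tangent space at $1$ of the closed subgroup ${\rm Aut}(A)$ is contained in ${\rm Der}(A)$. Conversely, for $D\in{\rm Der}(A)$ the exponential $\exp(tD)$ is an automorphism for all $t$, by the Leibniz rule and the binomial expansion of $D^k(xy)$. So ${\rm Der}(A)$ is the Lie algebra of ${\rm Aut}(A)$, and the two dimensions agree. Since the lemma is classical, I would present it citing these standard facts, with this exponential argument as a brief justification.
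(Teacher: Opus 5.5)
Your proposal is correct and follows the same route as the paper. Both identify the stabilizer with ${\rm Aut}(A)$, whose Lie algebra is ${\rm Der}(A)$, so that $\dim\mathcal O(\mu)=n^2-\dim{\rm Der}(A)$, and both use that the orbit is open in its closure, so the boundary is a closed ${\rm GL}(\mathbb{V})$-stable set of strictly smaller dimension; your extra justifications (Chevalley for openness, Cartier or the exponential argument for $\dim{\rm Aut}(A)=\dim{\rm Der}(A)$) just fill in the standard facts the paper states without proof.
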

\begin{proof}
The stabilizer of $\mu\in\mathfrak{Jord}_n$ in ${\rm GL}(\mathbb{V})$ is the automorphism group of the algebra, whose Lie algebra is the algebra of derivations; hence ${\rm dim}\, {\mathcal O}(\mu)=n^2-{\rm dim}\,{\rm Der}(\mu)$. The orbit ${\mathcal O}(\mu)$ is open in its closure, and the boundary $\overline{{\mathcal O}(\mu)}\setminus {\mathcal O}(\mu)$ is a closed ${\rm GL}(\mathbb{V})$-stable set of smaller dimension; thus ${\rm dim}\, {\mathcal O}(\lambda)<{\rm dim}\, {\mathcal O}(\mu)$ for $\lambda\in\overline{{\mathcal O}(\mu)}\setminus {\mathcal O}(\mu)$.
\end{proof}

We compute the dimensions of algebras of derivations and check the assertion $A\to B$ only for such $A$ and $B$ that ${\rm Der}(A)<{\rm Der}(B)$. Secondly, if $A\to C$ and $C\to B$ then $A\to B$. If there is no $C$ such that $A\to C$ and $C\to B$ are proper degenerations, then the assertion $A\to B$ is called a {\it primary degeneration}. If ${\rm Der}(A)<{\rm Der}(B)$ and there are no $C$ and $D$ such that $C\to A$, $B\to D$, $C\not\to D$ and one of the assertions $C\to A$ and $B\to D$ is a proper degeneration,  then the assertion $A \not\to B$ is called a {\it primary non-degeneration}. It suffices to prove only primary degenerations and non-degenerations to describe degenerations in the variety under consideration. It is easy to see that any algebra degenerates to the algebra with zero multiplication.
Since the number of orbits of ${\rm GL}(\mathbb{V})$ on $\mathfrak{Jord}_4$ is finite (Table~1), and the variety of $5$-dimensional nilpotent Jordan algebras is a finite union of orbits and of five families of orbits (Table~4), the graph of primary degenerations gives the whole picture; in particular, the rigid algebras and the irreducible components can be easily obtained from it.

\medskip

{\it Degenerations.} To prove primary degenerations, we construct families of matrices parametrized by $t$. Namely, let $A$ and $B$ be two algebras represented by the structures $\mu$ and $\lambda$ from $\mathfrak{Jord}_n$ respectively. Let $e_1,\dots, e_n$ be a basis of $\mathbb{V}$ and $c_{i,j}^k$ ($1\le i,j,k\le n$) be the structure constants of $\lambda$ in this basis. If there exist $a_i^j(t)\in\mathbb{C}$ ($1\le i,j\le n$, $t\in\mathbb{C}^*$) such that ${\rm E}^t_i=\sum\limits_{j=1}^na_i^j(t)e_j$ ($1\le i\le n$) form a basis of $\mathbb{V}$ for any small $t\neq 0$, and the structure constants of $\mu$ in the basis ${\rm E}^t_1,\dots, {\rm E}^t_n$ are such functions $c_{i,j}^k(t)$ that $c_{i,j}^k(t)\to c_{i,j}^k$ as $t\to 0$, then $A\to B$. In this case  ${\rm E}^t_1,\dots, {\rm E}^t_n$ is called a {\it parametrized basis} for $A\to B$.
In Tables~2 and~5 the vectors ${\rm E}^t_i$ are written in the basis $e_1,\dots,e_n$ of the algebra $A$ from Tables~1 and~4.

{\it Families of algebras.} In dimension $5$ the variety contains one- and two-parameter families of algebras $\{\mu^{\alpha}\}_{\alpha}$; a family is always written with its parameter, i.e. as $\mu^{\alpha}$ (or $\mu^{\alpha,\beta}$). We put $\overline{\{{\mathcal O}(\mu^{\alpha})\}}=\overline{\bigcup_\alpha {\mathcal O}(\mu^{\alpha})}$. A degeneration $\lambda\in\overline{\{{\mathcal O}(\mu^{\alpha})\}}$ is written as $\mu^{\alpha(t)}\to\lambda$, where $\mu^{\alpha(t)}$ is a member of the family depending on $t$ such that the structure constants of $\mu^{\alpha(t)}$ in a parametrized basis ${\rm E}^t_1,\dots,{\rm E}^t_n$ tend to the structure constants of $\lambda$ (this is a parametrized basis for the degeneration of the family). For a family target, $\mu\to\lambda^{\alpha}$ (resp. $\mu^{\beta(t)}\to\lambda^{\alpha}$) means that the degeneration holds for every value of $\alpha$. In non-degenerations and in the description of the irreducible components a family stands for the whole family: $\mu^{\alpha}\not\to\lambda$ means $\lambda\notin\overline{\{{\mathcal O}(\mu^{\alpha})\}}$. The definitions of primary degenerations and primary non-degenerations are the same for families. Since an algebra and a family can have different orbit dimensions, in Lemma~\ref{lem_der} one has to compare the dimension of the orbit of $\lambda$ with the dimension of the orbits of the members $\mu^\alpha$: if ${\rm Der}(\mu^\alpha)\ge {\rm Der}(\lambda)$ for all $\alpha$ and $\lambda$ is not isomorphic to a member of the family, then no single member $\mu^{\alpha}$ degenerates to $\lambda$, although $\lambda\in\overline{\{{\mathcal O}(\mu^{\alpha})\}}$ may hold.

\medskip

{\it Non-degenerations.} The main tools are the following two lemmas.

\begin{lemma}\label{main}
Let $\mathcal{B}$ be a Borel subgroup of ${\rm GL}(\mathbb{V})$ and $\mathcal{R}\subset \mathfrak{Jord}_n$ be a closed subset.
If $A \to B$ and $A$ can be represented by a structure $\mu$ such that $b\cdot\mu\in\mathcal{R}$ for all $b\in\mathcal{B}$ (for example, $\mu\in\mathcal{R}$ and $\mathcal{R}$ is $\mathcal{B}$-stable), then there is $\lambda\in \mathcal{R}$ that represents $B$.
\end{lemma}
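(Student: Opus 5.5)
The argument is the standard one from Grunewald–O'Halloran: pass from $\mathcal{B}$ to $\mathrm{GL}(\mathbb{V})$ using completeness of the flag variety $\mathrm{GL}(\mathbb{V})/\mathcal{B}$. Put $G=\mathrm{GL}(\mathbb{V})$ and let $\lambda_0$ be any structure representing $B$. Since $A\to B$, we have $\lambda_0\in\overline{G\cdot\mu}$. The aim is to find $g\in G$ with $g\cdot\lambda_0\in\mathcal{R}$; then $\lambda=g\cdot\lambda_0$ represents $B$ and lies in $\mathcal{R}$.

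\textbf{Step 1.} Set $\mathcal{R}'=\{\nu\in\mathfrak{Jord}_n : b\cdot\nu\in\mathcal{R}\ \text{for all } b\in\mathcal{B}\}=\bigcap_{b\in\mathcal{B}} b^{-1}\cdot\mathcal{R}$. This is an intersection of closed sets, so it is closed. It is $\mathcal{B}$-stable by construction, it lies in $\mathcal{R}$ because $e\in\mathcal{B}$, and $\mu\in\mathcal{R}'$ by hypothesis. So we may replace $\mathcal{R}$ by $\mathcal{R}'$ and assume $\mathcal{R}$ is $\mathcal{B}$-stable with $\mu\in\mathcal{R}$. Then $\mathcal{B}\cdot\mu\subset\mathcal{R}$.

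\textbf{Step 2.} We show that $G\cdot\mathcal{R}$ is closed in $\mathfrak{Jord}_n$. Consider
$$Z=\{(g\mathcal{B},\nu)\in G/\mathcal{B}\times\mathfrak{Jord}_n : g^{-1}\cdot\nu\in\mathcal{R}\}.$$
This condition is well defined because $\mathcal{R}$ is $\mathcal{B}$-stable. The set $Z$ is closed: its preimage in $G\times\mathfrak{Jord}_n$ is the preimage of $\mathcal{R}$ under the morphism $(g,\nu)\mapsto g^{-1}\cdot\nu$, and $G\times\mathfrak{Jord}_n\to G/\mathcal{B}\times\mathfrak{Jord}_n$ is a quotient map (open and surjective). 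Now $G/\mathcal{B}$ is a projective variety, hence complete. Therefore the projection $G/\mathcal{B}\times\mathfrak{Jord}_n\to\mathfrak{Jord}_n$ is closed, and the image of $Z$, which is exactly $G\cdot\mathcal{R}$, is closed.

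\textbf{Step 3.} Since $\mu\in\mathcal{R}$, we get $G\cdot\mu\subset G\cdot\mathcal{R}$. By Step 2 this gives $\overline{G\cdot\mu}\subset G\cdot\mathcal{R}$, so $\lambda_0\in G\cdot\mathcal{R}$. Hence $\lambda_0=g\cdot\rho$ for some $g\in G$ and $\rho\in\mathcal{R}$. Then $\lambda=\rho=g^{-1}\cdot\lambda_0$ represents $B$ and lies in $\mathcal{R}$.

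The main obstacle is Step 2, the closedness of $G\cdot\mathcal{R}$. It rests on the completeness of the flag variety and on checking carefully that $Z$ is closed. For the latter, one can work on $G\times\mathfrak{Jord}_n$, where the preimage of $Z$ is closed and $\mathcal{B}$-saturated, and then use that $G\to G/\mathcal{B}$ is a locally trivial quotient, so its image $Z$ is closed. If one prefers to avoid quotients, there is an elementary alternative: write $G=K\mathcal{B}$ with $K=U(n)$ compact (Iwasawa decomposition), take a sequence $g_k\cdot\mu\to\lambda_0$ in the Euclidean topology, write $g_k=k_kb_k$, and pass to a convergent subsequence $k_k\to k$. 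Then $b_k\cdot\mu\in\mathcal{R}$, so $k^{-1}\cdot\lambda_0=\lim k_k^{-1}g_k\cdot\mu=\lim b_k\cdot\mu\in\mathcal{R}$, because Zariski-closed sets are Euclidean-closed and, over $\mathbb{C}$, the orbit has the same closure in both topologies. This version is probably the cleanest to present.
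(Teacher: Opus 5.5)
Your Steps 1--3 are essentially the paper's proof. Both use the auxiliary set $\mathcal{R}'=\bigcap_{b\in\mathcal{B}}b^{-1}\cdot\mathcal{R}$ and the closed incidence set in ${\rm GL}(\mathbb{V})/\mathcal{B}\times\mathfrak{Jord}_n$, whose projection is closed by completeness of the flag variety. You also justify why that incidence set is closed ($\mathcal{B}$-saturation plus local triviality of ${\rm GL}(\mathbb{V})\to{\rm GL}(\mathbb{V})/\mathcal{B}$), which the paper only asserts. Your Iwasawa-decomposition variant is correct as well: it avoids $\mathcal{R}'$ and the flag variety, at the cost of using compactness of $U(n)$ and the fact that Zariski and Euclidean closures of orbits coincide over $\mathbb{C}$.
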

\begin{proof}
The set $\mathcal{R}'=\{\nu\mid b\cdot\nu\in\mathcal{R}\mbox{ for all }b\in\mathcal{B}\}=\bigcap_{b\in\mathcal{B}}b^{-1}\cdot\mathcal{R}$ is closed, $\mathcal{B}$-stable, contains $\mu$ and is contained in $\mathcal{R}$. The set ${\rm GL}(\mathbb{V})\cdot\mathcal{R}'$ is the image of the closed subset $\big\{(g\mathcal{B},\nu)\mid g^{-1}\nu\in\mathcal{R}'\big\}$ of ${\rm GL}(\mathbb{V})/\mathcal{B}\times\mathfrak{Jord}_n$ under the projection to the second factor. Since the flag variety ${\rm GL}(\mathbb{V})/\mathcal{B}$ is complete, this projection is closed, so ${\rm GL}(\mathbb{V})\cdot\mathcal{R}'$ is a closed set containing ${\mathcal O}(\mu)$; hence it contains $\overline{{\mathcal O}(\mu)}\ni\lambda'$ for every structure $\lambda'$ representing $B$, i.e. $g\cdot\lambda'\in\mathcal{R}'\subseteq\mathcal{R}$ for some $g\in {\rm GL}(\mathbb{V})$.
\end{proof}

In the description of the sets $\mathcal{R}$ we write $A_i=\langle f_i,\dots,f_n\rangle$ ($1\le i\le n$) and $A_{n+1}=0$, where $f_1,\dots,f_n$ is a basis of $\mathbb{V}$ and $c_{ij}^k$ are the structure constants in this basis; thus $A_iA_j\subseteq A_k$ means that $c_{i'j'}^{l}=0$ for all $i'\ge i$, $j'\ge j$, $l<k$. The other conditions are the vanishing of some structure constants and polynomial relations between them. Let $\mathcal{B}$ be the Borel subgroup of upper triangular matrices with respect to the basis $f_1,\dots,f_n$, i.e. of the changes of basis $f_i\mapsto \sum_{j\ge i} b_{ij}f_j$ with $b_{ii}\neq 0$. We omit the verification of the facts that $b\cdot\mu\in\mathcal{R}$ for all $b\in\mathcal{B}$ (where $\mu$ is the structure of $A$ in the indicated basis) and that $\lambda\not\in\mathcal{R}$ for any choice of a basis of $\mathbb{V}$; these verifications are direct calculations, and for Table~3 all of them were verified in Lean.

\begin{lemma}\label{lem_inv}
Let $\Phi(x_1,\dots,x_m)$ be a linear combination of (nonassociative) products of elements $x_1,\dots,x_m$ of an algebra (a polynomial identity).
Then the set $\mathcal{R}$ of the structures $\mu\in\mathfrak{Jord}_n$ such that $\Phi$ vanishes identically on $(\mathbb{V},\mu)$ is closed and ${\rm GL}(\mathbb{V})$-stable. The sets $\big\{\mu \mid {\rm dim}\,{\rm Ann}(\mu)\geq k\big\}$ and $\big\{\mu\mid {\rm dim}\,\mu(\mathbb{V},\mathbb{V})\leq k\big\}$ are also closed and ${\rm GL}(\mathbb{V})$-stable.
\end{lemma}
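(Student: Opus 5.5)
The plan is to handle each of the three sets separately: write it as the zero set (or a rank condition) of polynomials in the structure constants $c_{i,j}^k$, and then check ${\rm GL}(\mathbb{V})$-stability from the basis-free nature of the defining condition.

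\emph{The identity $\Phi$.} Expand $\Phi$ multilinearly. Substituting $x_r=\sum_i a_{r,i}e_i$ with indeterminate coefficients $a_{r,i}$, each nonassociative monomial in $x_1,\dots,x_m$ evaluated in $(\mathbb{V},\mu)$ becomes a vector. Its coordinates are polynomials in the $a_{r,i}$ whose coefficients are polynomials in the $c_{i,j}^k$; this follows by induction on the bracketing, since $\mu(u,v)$ has coordinates $\sum c_{i,j}^k u_iv_j$. The condition that $\Phi$ vanishes identically is equivalent to the vanishing of all these coefficient polynomials in the $c_{i,j}^k$. Here we use that $\mathbb{C}$ is infinite, so a polynomial vanishing at all points is zero. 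Hence $\mathcal{R}$ is the intersection of $\mathfrak{Jord}_n$ with a Zariski-closed set, and is therefore closed. For stability, $g$ is an isomorphism $(\mathbb{V},\mu)\to(\mathbb{V},g*\mu)$, and the identity is preserved because $\Phi_{g*\mu}(x_1,\dots,x_m)=g\,\Phi_\mu(g^{-1}x_1,\dots,g^{-1}x_m)$.

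\emph{The annihilator.} ${\rm Ann}(\mu)$ is the kernel of the linear map $\mathbb{V}\to{\rm Hom}(\mathbb{V},\mathbb{V})$, $x\mapsto\mu(x,\cdot)$. Its matrix $M(\mu)$ is of size $n^2\times n$ with entries $c_{i,j}^k$ (row indexed by $(j,k)$, column by $i$). Since $\dim{\rm Ann}\ge k$ iff ${\rm rank}\,M(\mu)\le n-k$, the set is cut out by the vanishing of all $(n-k+1)$-minors, hence is closed.

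\emph{The square.} Similarly, $\mu(\mathbb{V},\mathbb{V})$ is the image of $\mu\colon\mathbb{V}\otimes\mathbb{V}\to\mathbb{V}$. Its matrix is $n\times n^2$ with entries $c_{i,j}^k$, and $\dim\le k$ is given by the vanishing of the $(k+1)$-minors. Stability of both sets follows because $g$ maps ${\rm Ann}(\mu)$ onto ${\rm Ann}(g*\mu)$ and $\mu(\mathbb{V},\mathbb{V})$ onto $(g*\mu)(\mathbb{V},\mathbb{V})$, so the dimensions are invariant.

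No step is a real obstacle. The only point needing care is the first: one must state precisely that vanishing identically reduces to polynomial conditions on the $c_{i,j}^k$. I would do this cleanly by linearizing, i.e.\ requiring the vanishing for all basis substitutions of the full multilinearization. Alternatively, one can keep the polynomial-in-$a$ argument above, which works directly without linearization.
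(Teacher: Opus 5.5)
Your proof is correct and follows the paper's argument. Closedness comes from polynomial conditions on the structure constants; the paper skips your coefficient extraction and simply intersects, over all tuples $(x_1,\dots,x_m)$, the zero sets of the polynomial maps $\mu\mapsto\Phi_\mu(x_1,\dots,x_m)$. The other two sets are cut out by minors of structure-constant matrices, and ${\rm GL}(\mathbb{V})$-stability follows by transport of structure along $g$.

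One point is left implicit: the kernel of $x\mapsto\mu(x,\cdot)$ is only the left annihilator, and it equals ${\rm Ann}(\mu)$ here because every structure in $\mathfrak{Jord}_n$ is commutative.
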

\begin{proof}
The set of structures $\mu$ such that $\Phi$ vanishes identically on $(\mathbb{V},\mu)$ is the intersection, over all $x_1,\dots,x_m\in\mathbb{V}$, of the zero sets of polynomial functions of $\mu$; hence it is closed. It is ${\rm GL}(\mathbb{V})$-stable, since an isomorphism of algebras maps products to products. The last two sets are given by the vanishing of minors of matrices whose entries are structure constants, and they are clearly ${\rm GL}(\mathbb{V})$-stable.
\end{proof}

In particular, the sets of Lemma~\ref{lem_inv} can be used in Lemma~\ref{main}; for example, $A\not\to B$ if ${\rm Ann}(A)>{\rm Ann}(B)$ or $A^{2}<B^{2}$.

{\it The proofs of non-degenerations.} The non-degenerations are proved in the same way in both dimensions. By Lemma~\ref{lem_der}, we only have to consider the pairs $A$, $B$ with ${\rm Der}(A)<{\rm Der}(B)$. Moreover, it is enough to prove the primary non-degenerations, since all the other ones follow by transitivity: if $A\not\to B$, then $C\not\to B$ for every $C$ with $A\to C$, and $A\not\to D$ for every $D$ with $D\to B$. In the tables of primary non-degenerations (Tables~3 and~6) they are grouped by the first algebra: the row of an algebra $A$ lists algebras $B$ such that $A\not\to B$ is a primary non-degeneration, and gives one closed set $\mathcal{R}$ which contains a structure $\mu$ representing $A$ with $b\cdot\mu\in\mathcal{R}$ for all $b\in\mathcal{B}$ (for a family: such a structure for every member of the family), but no structure representing any of these algebras $B$; by Lemma~\ref{main}, $A\not\to B$. The set $\mathcal{R}$ is of one of the following two kinds.
\begin{itemize}
\item $\mathcal{R}$ is one of the ${\rm GL}(\mathbb{V})$-stable sets of Lemma~\ref{lem_inv}, defined by a polynomial identity or by a condition on ${\rm Ann}$. Then $A\in\mathcal{R}$ in any basis, and the indicated elements of $B$ show that $B\not\in\mathcal{R}$.
\item $\mathcal{R}$ is defined in a basis $f_1,\dots,f_n$ of $\mathbb{V}$ as explained above. The condition $A=\langle f_1,\dots,f_n\rangle$ is included in every such set $\mathcal{R}$ and is not repeated in the tables. The basis of $A$ in which $A\in\mathcal{R}$ is indicated in terms of the basis $e_1,\dots,e_n$ of the multiplication table of $A$ (Tables~1 and~4); if no basis is indicated, then $f_i=e_i$.
\end{itemize}
In the second case, to show that $B$ admits no basis in which its structure lies in $\mathcal{R}$, we use the Bruhat decomposition: every invertible matrix can be written as $P_wUb$, where $P_w$ is one of the $n!$ permutation matrices, $U$ is unitriangular and $b\in\mathcal{B}$. Since $\mathcal{R}$ is $\mathcal{B}$-stable, it is enough to show that for every $w$ the structure constants of $B$ in the basis given by $P_wU$, which are polynomials in the entries of $U$, do not satisfy the equations of $\mathcal{R}$; this is done by Nullstellensatz certificates or by Gr\"obner basis computations.

\section{Degenerations of $4$-dimensional Jordan algebras}\label{sec4}

Thanks to the Wedderburn decomposition and the Albert--Penico--Taft theorem, every finite-dimensional Jordan algebra is constructed from a nilpotent radical and a semisimple subalgebra.
In Table 1 we list all $4$-dimensional Jordan algebras.

\begin{lemma}\label{lem_table}
Every complex $4$-dimensional Jordan algebra is isomorphic to one of the algebras of Table~1, where $\mathfrak{J}_{73}=\mathbb{C}^4$ is the algebra with zero multiplication (the algebraic classification {\rm \cite{meg}}, based on {\rm \cite{contr11}}). Here $e_1,\dots,e_4$ is a basis of the algebra, the products of basis elements which are not listed are zero, and the columns $\mathfrak{Der}$, ${\rm Ann}$, $\mathbb{A}^2$, ${\rm Rad}$ give the dimensions of the algebra of derivations, of the annihilator, of the square and of the radical.
\end{lemma}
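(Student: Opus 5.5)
This is a classification statement, so I would not rederive it from first principles. The plan is to reduce it to the classification of \cite{meg}, which itself rests on the nilpotent list of \cite{contr11}, and to check the accompanying invariants. The structure theory recalled before the lemma gives the reduction: every $4$-dimensional Jordan algebra $A$ splits as $A=S\oplus N$ with $N={\rm Rad}(A)$ nilpotent and $S$ semisimple, by the Wedderburn decomposition and the Albert--Penico--Taft theorem. Over $\mathbb{C}$, a semisimple Jordan algebra of dimension at most $4$ is a direct sum of copies of $\mathbb{C}$ and spin factors $\mathbb{C}1\oplus V$ with ${\rm dim}\,V\ge 2$.

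The cases would be organized by ${\rm dim}\,S\in\{0,1,2,3,4\}$. For ${\rm dim}\,S=0$, $A$ is nilpotent, and one takes the list of nilpotent algebras of dimension at most $4$ from \cite{contr11}. For ${\rm dim}\,S\ge 1$, write $S$ as a sum of orthogonal idempotents $e$. Each $N$ then carries a Peirce decomposition $N=\bigoplus N_{ij}$ with respect to these idempotents, with eigenvalues $0,\tfrac12,1$ under the left multiplications $L_e$. The possible bimodule structures of $S$ on $N$, together with the compatible nilpotent multiplications on $N$, are exactly what \cite{meg} enumerates up to isomorphism. So for the existence part I would only indicate how each such configuration lands on a row of Table~1, and I would not redo \cite{meg}. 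For example, unital algebras arise from a unit idempotent, and non-unital ones carry a nonzero Peirce-$0$ or Peirce-$\tfrac12$ part.

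The genuinely checkable content is twofold. First, every table in Table~1 must be commutative and satisfy $\big((xx)y\big)x=(xx)(yx)$. It suffices to verify the linearized identity on basis elements, which is a finite computation done in Lean. Second, the columns must be right: $\mathfrak{Der}$, ${\rm Ann}$, $\mathbb{A}^2$ and ${\rm Rad}$. The dimension of ${\rm Der}$ is the nullity of the linear system $D(e_ie_j)=D(e_i)e_j+e_iD(e_j)$ in the $16$ unknowns of $D$. ${\rm Ann}$ and $\mathbb{A}^2$ are read off from ranks of the structure-constant matrices. ${\rm Rad}$ is found as the maximal nilpotent ideal, equivalently the radical of the trace form $\operatorname{tr}L_{xy}$.

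The main obstacle is pairwise non-isomorphism of the $73$ rows, and with it the correctness of the list as transcribed. I would handle it by first separating algebras by the invariant vector $(\mathfrak{Der},{\rm Ann},\mathbb{A}^2,{\rm Rad})$. Within each remaining cluster I would use finer invariants: the isomorphism type of $S$, the Peirce dimensions, ${\rm dim}\,(A^2)^2$ and ${\rm dim}\,A^3$, and whether the algebra is associative. Any pair not separated this way would need an explicit argument. For such a pair I would show that no basis change satisfies the relevant polynomial system, using a Gröbner basis computation in the spirit of the Bruhat-cell certificates described above.
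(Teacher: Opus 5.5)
Your proposal takes essentially the same approach as the paper, which also uses Lemma~\ref{lem_table} as a known result from \cite{meg} (based on \cite{contr11}). The paper verifies formally only two things: that every table of Table~1 is a commutative algebra satisfying the Jordan identity, and that the $\mathfrak{Der}$ column is correct, which is exactly your ``genuinely checkable content''. Your extra pairwise non-isomorphism check goes beyond what the paper does for Table~1; a comparable check appears only for the $5$-dimensional list in Remark~\ref{rem:ha}.
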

\begingroup
\small
\setlength{\tabcolsep}{4pt}
\renewcommand{\arraystretch}{1.2}
\begin{longtable}{|l|>{\raggedright\arraybackslash}p{10.6cm}|c|c|c|c|}
\multicolumn{6}{c}{Table 1. {\it $4$-dimensional Jordan algebras.}}\\
\hline
$\mathbb{A}$ & \mbox{multiplication table} & $\mathfrak{Der}$ & ${\rm Ann}$ & $\mathbb{A}^2$ & ${\rm Rad}$\\
\hline
\hline
\endhead

$\mathfrak{J}_{01}$ &
$e_1e_1  = e_1$, $e_1e_3 = \frac{1}{2}e_3$, $e_2e_2  =e_2$,\newline $e_2e_3 = \frac{1}{2}e_3$, $e_3e_3 =e_1+e_2$, $e_4e_4 = e_4$
& 1 & 0 & 4 & 0\\
\hline
$\mathfrak{J}_{02}$ &
$e_1e_1  = e_1$, $e_1e_2 = e_2$, $e_1e_3 = e_3$,\newline $e_1e_4 = e_4$, $e_2e_2  = e_1$, $e_3e_3  = e_1$, $e_4e_4  = e_1$
& 3 & 0 & 4 & 0\\
\hline
$\mathfrak{J}_{03}$ &
$e_1e_1  = e_1$, $e_2e_2  = e_2$, $e_3e_3  = e_3$, $e_4e_4  = e_4$
& 0 & 0 & 4 & 0\\
\hline
$\mathfrak{J}_{04}$ &
$e_1e_1 =e_1$, $e_1e_4 =e_4$, $e_2e_2 =e_2$, $e_3e_3 =e_3$
& 1 & 0 & 4 & 1\\
\hline
$\mathfrak{J}_{05}$ &
$e_1e_1  = e_1$, $e_2e_2  = e_2$, $e_3e_3  = e_3$
& 1 & 1 & 3 & 1\\
\hline
$\mathfrak{J}_{06}$ &
$e_1e_1 =e_1$, $e_1e_4 = \frac{1}{2}e_4$, $e_2e_2 =e_2$, $e_3e_3 =e_3$
& 2 & 0 & 4 & 1\\
\hline
$\mathfrak{J}_{07}$ &
$e_1e_1  = e_1$, $e_1e_4 = \frac{1}{2}e_4$, $e_2e_2  = e_2$, $e_2e_4 = \frac{1}{2}e_4$, $e_3e_3 =e_3$
& 2 & 0 & 4 & 1\\
\hline
$\mathfrak{J}_{08}$ &
$e_1e_1  = e_1$, $e_1e_3= \frac{1}{2}e_3$, $e_2e_2  = e_2$, $e_2e_3 = \frac{1}{2}e_3$, $e_3e_3 =e_1+e_2$
& 2 & 1 & 3 & 1\\
\hline
$\mathfrak{J}_{09}$ &
$e_1e_1  = e_1$, $e_1e_3 = \frac{1}{2}e_3$, $e_1e_4=\frac{1}{2}e_4$, $e_2e_2  = e_2$,\newline $e_2e_3= \frac{1}{2}e_3$, $e_2e_4  = \frac{1}{2}e_4$, $e_3e_3 =e_1+e_2$
& 4 & 0 & 4 & 1\\
\hline
$\mathfrak{J}_{10}$ &
$e_1e_1  = e_1$, $e_1e_3 = \frac{1}{2}e_3$, $e_2e_2 =e_2$
& 3 & 1 & 3 & 2\\
\hline
$\mathfrak{J}_{11}$ &
$e_1e_1  = e_1$, $e_1e_3 = \frac{1}{2}e_3$, $e_2e_2 =e_2$, $e_2e_3 = \frac{1}{2}e_3$
& 3 & 1 & 3 & 2\\
\hline
$\mathfrak{J}_{12}$ &
$e_1e_1  = e_1$, $e_1e_3 = \frac{1}{2}e_3$, $e_1e_4 = \frac{1}{2}e_4$, $e_2e_2 =e_2$
& 6 & 0 & 4 & 2\\
\hline
$\mathfrak{J}_{13}$ &
$e_1e_1  = e_1$, $e_1e_3 =\frac{1}{2}e_3$, $e_2e_2 =e_2$, $e_2e_4 =\frac{1}{2}e_4$
& 4 & 0 & 4 & 2\\
\hline
$\mathfrak{J}_{14}$ &
$e_1e_1  = e_1$, $e_1e_3 =\frac{1}{2}e_3$, $e_1e_4 = e_4$, $e_2e_2 =e_2$
& 3 & 0 & 4 & 2\\
\hline
$\mathfrak{J}_{15}$ &
$e_1e_1  = e_1$, $e_1e_3 =e_3$, $e_2e_2 =e_2$, $e_2e_4 =\frac{1}{2}e_4$
& 3 & 0 & 4 & 2\\
\hline
$\mathfrak{J}_{16}$ &
$e_1e_1  = e_1$, $e_1 e_3 =  \frac{1}{2}e_3$, $e_1e_4 = \frac{1}{2}e_4$, $e_2e_2 =e_2$, $e_2e_3 = \frac{1}{2}e_3$
& 4 & 0 & 4 & 2\\
\hline
$\mathfrak{J}_{17}$ &
$e_1e_1 =e_1$, $e_1e_3 = \frac{1}{2}e_3$, $e_1e_4 = e_4$, $e_2e_2 =e_2$, $e_2e_3 = \frac{1}{2}e_3$
& 3 & 0 & 4 & 2\\
\hline
$\mathfrak{J}_{18}$ &
$e_1e_1  = e_1$, $e_1e_3 =  \frac{1}{2}e_3$, $e_1e_4 = \frac{1}{2}e_4$,\newline $e_2e_2  = e_2$, $e_2e_3 = \frac{1}{2}e_3$, $e_2e_4 = \frac{1}{2}e_4$
& 6 & 0 & 4 & 2\\
\hline
$\mathfrak{J}_{19}$ &
$e_1e_1  = e_1$, $e_2e_2  = e_2$
& 4 & 2 & 2 & 2\\
\hline
$\mathfrak{J}_{20}$ &
$e_1e_1  = e_1$, $e_1e_3 =e_3$, $e_2e_2  = e_2$
& 2 & 1 & 3 & 2\\
\hline
$\mathfrak{J}_{21}$ &
$e_1e_1  = e_1$, $e_1e_3 =e_3$, $e_1e_4=e_4$, $e_2e_2  = e_2$
& 4 & 0 & 4 & 2\\
\hline
$\mathfrak{J}_{22}$ &
$e_1e_1  = e_1$, $e_1e_3 =e_3$, $e_2e_2  = e_2$, $e_2e_4 = e_4$
& 2 & 0 & 4 & 2\\
\hline
$\mathfrak{J}_{23}$ &
$e_1e_1  = e_1$, $e_1e_3 = \frac{1}{2}e_3$, $e_2e_2  = e_2$, $e_3e_3  = e_4$
& 2 & 1 & 4 & 2\\
\hline
$\mathfrak{J}_{24}$ &
$e_1e_1  = e_1$, $e_1e_3 = \frac{1}{2}e_3$, $e_1e_4 = e_4$, $e_2e_2  = e_2$, $e_3e_3  = e_4$
& 2 & 0 & 4 & 2\\
\hline
$\mathfrak{J}_{25}$ &
$e_1e_1  = e_1$, $e_1e_4  = e_4$, $e_1e_3=\frac{1}{2}e_3$, $e_2e_2  = e_2$, $e_2e_3 = \frac{1}{2}e_3$, $e_3e_3  = e_4$
& 2 & 0 & 4 & 2\\
\hline
$\mathfrak{J}_{26}$ &
$e_1e_1  = e_1$, $e_2e_2  = e_2$, $e_3e_3  = e_4$
& 2 & 1 & 3 & 2\\
\hline
$\mathfrak{J}_{27}$ &
$e_1e_1  = e_1$, $e_1e_3 = e_3$, $e_1e_4=e_4$, $e_2e_2  = e_2$, $e_3e_3  = e_4$
& 2 & 0 & 4 & 2\\
\hline
$\mathfrak{J}_{28}$ &
$e_1e_1  = e_1$, $e_1e_2 =\frac{1}{2}e_2$
& 6 & 2 & 2 & 3\\
\hline
$\mathfrak{J}_{29}$ &
$e_1e_1  = e_1$, $e_1e_2 =\frac{1}{2}e_2$, $e_1e_3 = e_3$
& 4 & 1 & 3 & 3\\
\hline
$\mathfrak{J}_{30}$ &
$e_1e_1  = e_1$, $e_1e_2  = \frac{1}{2}e_2$, $e_1e_3 = \frac{1}{2}e_3$
& 7 & 1 & 3 & 3\\
\hline
$\mathfrak{J}_{31}$ &
$e_1e_1  = e_1$, $e_1e_2 = e_2$, $e_1e_3 = e_3$, $e_1e_4 = \frac{1}{2}e_4$
& 6 & 0 & 4 & 3\\
\hline
$\mathfrak{J}_{32}$ &
$e_1e_1  = e_1$, $e_1e_2 = \frac{1}{2}e_2$, $e_1e_3 = \frac{1}{2}e_3$, $e_1e_4 = e_4$
& 7 & 0 & 4 & 3\\
\hline
$\mathfrak{J}_{33}$ &
$e_1e_1  = e_1$, $e_1e_2 = \frac{1}{2}e_2$, $e_1e_3 = \frac{1}{2}e_3$, $e_1e_4 = \frac{1}{2}e_4$
& 12 & 0 & 4 & 3\\
\hline
$\mathfrak{J}_{34}$ &
$e_1e_1  = e_1$
& 9 & 3 & 1 & 3\\
\hline
$\mathfrak{J}_{35}$ &
$e_1e_1  = e_1$, $e_1e_2 =e_2$
& 5 & 2 & 2 & 3\\
\hline
$\mathfrak{J}_{36}$ &
$e_1e_1  = e_1$, $e_1e_2 = e_2$, $e_1e_3 = e_3$, $e_1e_4 = e_4$
& 9 & 0 & 4 & 3\\
\hline
$\mathfrak{J}_{37}$ &
$e_1e_1  = e_1$, $e_1e_2 = e_2$, $e_1e_3=e_3$
& 5 & 1 & 3 & 3\\
\hline
$\mathfrak{J}_{38}$ &
$e_1e_1  = e_1$, $e_2e_2  = e_3$, $e_2e_3 =e_4$
& 3 & 1 & 3 & 3\\
\hline
$\mathfrak{J}_{39}$ &
$e_1e_1  = e_1$, $e_1e_2 = e_2$, $e_1e_3 = e_3$, $e_1e_4 = e_4$, $e_2e_2  = e_3$, $e_2e_3 = e_4$
& 3 & 0 & 4 & 3\\
\hline
$\mathfrak{J}_{40}$ &
$e_1e_1  = e_1$, $e_2e_2  = e_3$
& 5 & 2 & 2 & 3\\
\hline
$\mathfrak{J}_{41}$ &
$e_1e_1  = e_1$, $e_2e_2  =e_3$, $e_4e_4  = e_3$
& 4 & 1 & 2 & 3\\
\hline
$\mathfrak{J}_{42}$ &
$e_1e_1  = e_1$, $e_1e_2 = e_2$, $e_1e_3 = e_3$, $e_1e_4 =e_4$, $e_2e_2  = e_3$
& 5 & 0 & 4 & 3\\
\hline
$\mathfrak{J}_{43}$ &
$e_1e_1  = e_1$, $e_1e_2 = e_2$, $e_1e_3 = e_3$, $e_1e_4 = e_4$, $e_2e_2  = e_3$, $e_2e_4 = e_3$
& 4 & 0 & 4 & 3\\
\hline
$\mathfrak{J}_{44}$ &
$e_1e_1  = e_1$, $e_1e_2 = \frac{1}{2}e_2$, $e_2e_2  = e_3$
& 4 & 2 & 3 & 3\\
\hline
$\mathfrak{J}_{45}$ &
$e_1e_1  = e_1$, $e_1e_2 = \frac{1}{2}e_2$, $e_2e_2  = e_3$, $e_4e_4  = e_3$
& 3 & 1 & 3 & 3\\
\hline
$\mathfrak{J}_{46}$ &
$e_1e_1  = e_1$, $e_1e_2 =\frac{1}{2}e_2$, $e_3e_3  = e_4$
& 4 & 1 & 3 & 3\\
\hline
$\mathfrak{J}_{47}$ &
$e_1e_1  = e_1$, $e_1e_2 =e_2$, $e_3e_3  = e_4$
& 3 & 1 & 3 & 3\\
\hline
$\mathfrak{J}_{48}$ &
$e_1e_1  = e_1$, $e_1e_3 = \frac{1}{2}e_3$, $e_1e_4 = \frac{1}{2}e_4$, $e_4e_4  = e_2$
& 5 & 1 & 4 & 3\\
\hline
$\mathfrak{J}_{49}$ &
$e_1e_1  = e_1$, $e_1e_3 = \frac{1}{2}e_3$, $e_1e_4 = \frac{1}{2}e_4$, $e_4e_4  =e_2$, $e_3e_4 =  e_2$
& 4 & 1 & 4 & 3\\
\hline
$\mathfrak{J}_{50}$ &
$e_1e_1  = e_1$, $e_1e_3 = \frac{1}{2}e_3$, $e_1e_4 = \frac{1}{2}e_4$, $e_2e_3 = e_4$
& 5 & 0 & 3 & 3\\
\hline
$\mathfrak{J}_{51}$ &
$e_1e_1  = e_1$, $e_1e_2 = \frac{1}{2}e_2$, $e_1e_3 = e_3$, $e_2e_2  = e_3$
& 3 & 1 & 3 & 3\\
\hline
$\mathfrak{J}_{52}$ &
$e_1e_1  = e_1$, $e_1e_2 = \frac{1}{2}e_2$, $e_1e_3 = e_3$, $e_2e_2  = e_4$
& 3 & 1 & 4 & 3\\
\hline
$\mathfrak{J}_{53}$ &
$e_1e_1  = e_1$, $e_1e_2 = \frac{1}{2}e_2$, $e_1e_3 = e_3$, $e_2e_2  = e_3 + e_4$
& 2 & 1 & 4 & 3\\
\hline
$\mathfrak{J}_{54}$ &
$e_1e_1  = e_1$, $e_1e_2 = e_2$, $e_1e_3=e_3$, $e_2e_2  = e_3$
& 3 & 1 & 3 & 3\\
\hline
$\mathfrak{J}_{55}$ &
$e_1e_1  = e_1$, $e_1e_2 = e_2$, $e_1e_3 = e_3$, $e_1e_4 = \frac{1}{2}e_4$, $e_4e_4  = e_3$
& 4 & 0 & 4 & 3\\
\hline
$\mathfrak{J}_{56}$ &
$e_1e_1  = e_1$, $e_1e_2 = e_2$, $e_1e_3 = e_3$, $e_1e_4 = \frac{1}{2}e_4$, $e_2e_2  = e_3$
& 4 & 0 & 4 & 3\\
\hline
$\mathfrak{J}_{57}$ &
$e_1e_1  = e_1$, $e_1e_2 = e_2$, $e_1e_3 = e_3$, $e_1e_4 = \frac{1}{2}e_4$, $e_2e_2  = e_3$, $e_4e_4  = e_3$
& 3 & 0 & 4 & 3\\
\hline
$\mathfrak{J}_{58}$ &
$e_1e_1  = e_1$, $e_1e_2 =e_2$, $e_4e_4  = e_2$, $e_1e_3 = \frac{1}{2}e_3$, $e_1e_4 = \frac{1}{2}e_4$
& 5 & 0 & 4 & 3\\
\hline
$\mathfrak{J}_{59}$ &
$e_1e_1  = e_1$, $e_1e_2 =e_2$, $e_4e_4  = e_2$, $e_3e_4 = e_2$, $e_1e_3 = \frac{1}{2}e_3$, $e_1e_4 = \frac{1}{2}e_4$
& 4 & 0 & 4 & 3\\
\hline
$\mathfrak{J}_{60}$ &
$e_1e_1  = e_1$, $e_1e_2 = e_2$, $e_1e_3 = \frac{1}{2}e_3$, $e_1e_4 = \frac{1}{2}e_4$, $e_2e_3 = e_4$
& 5 & 0 & 4 & 3\\
\hline
$\mathfrak{J}_{61}$ &
$e_1e_1  = e_2$, $e_1e_2 = e_3$, $e_1e_3 = e_4$, $e_2e_2  = e_4$
& 4 & 1 & 3 & 4\\
\hline
$\mathfrak{J}_{62}$ &
$e_1e_1  = e_2$, $e_1e_2 = e_3$, $e_4e_4  = e_2$
& 3 & 1 & 2 & 4\\
\hline
$\mathfrak{J}_{63}$ &
$e_1e_1  = e_2$, $e_1e_2 = e_3$, $e_2e_4 = e_3$, $e_4e_4  = -e_2-e_3$
& 4 & 1 & 2 & 4\\
\hline
$\mathfrak{J}_{64}$ &
$e_1e_1  = e_2$, $e_4e_4  = -e_2$, $e_1e_2 = e_3$, $e_2e_4 = e_3$
& 5 & 1 & 2 & 4\\
\hline
$\mathfrak{J}_{65}$ &
$e_1e_1  = e_2$, $e_1e_2 =e_3$, $e_2e_4 = e_3$
& 4 & 1 & 2 & 4\\
\hline
$\mathfrak{J}_{66}$ &
$e_1e_1  = e_2$, $e_1e_2 = e_3$, $e_4e_4  = e_3$
& 5 & 1 & 2 & 4\\
\hline
$\mathfrak{J}_{67}$ &
$e_1e_1  = e_2$, $e_1e_2 = e_3$
& 6 & 2 & 2 & 4\\
\hline
$\mathfrak{J}_{68}$ &
$e_1e_1  =e_2$, $e_3e_3  = e_4$
& 6 & 2 & 2 & 4\\
\hline
$\mathfrak{J}_{69}$ &
$e_1e_1  =  e_2$, $e_1e_3 = e_4$
& 7 & 2 & 2 & 4\\
\hline
$\mathfrak{J}_{70}$ &
$e_1e_1  = e_2$, $e_3e_4 = e_2$
& 7 & 1 & 1 & 4\\
\hline
$\mathfrak{J}_{71}$ &
$e_1e_1 =e_2$, $e_3e_3  = e_2$
& 8 & 2 & 1 & 4\\
\hline
$\mathfrak{J}_{72}$ &
$e_1e_1  = e_2$
& 10 & 3 & 1 & 4\\
\hline
\end{longtable}
\endgroup

\begin{theoremA}\label{third}\label{theorem}
The graph of primary degenerations for $4$-dimensional Jordan algebras has the form given in the figure below (the rows of the graph correspond to the dimension of the algebra of derivations): an algebra $\mathfrak{A}$ degenerates to an algebra $\mathfrak{B}$ if and only if there is a path from $\mathfrak{A}$ to $\mathfrak{B}$ in the graph. The order consists of $749$ degenerations and $4507$ non-degenerations between the $73$ algebras: the $160$ primary degenerations (Table~2) give the other $589$ degenerations by transitivity; the $106$ primary non-degenerations (Table~3) give $1407$ further non-degenerations, and the remaining $2994$ non-degenerations follow from Lemma~\ref{lem_der}.
\end{theoremA}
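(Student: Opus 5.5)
The plan is the standard bookkeeping argument over the finite poset of orbits, using Lemma~\ref{lem_table}, Lemma~\ref{lem_der}, Lemma~\ref{main} and Lemma~\ref{lem_inv}. By Lemma~\ref{lem_table} there are exactly $73$ orbits, so there are $73\cdot 72=5256$ ordered pairs $(\mathfrak{A},\mathfrak{B})$ of non-isomorphic algebras. The claim $749+4507=5256$ says every such pair must be decided, and I would decide each one in three layers.

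\emph{Layer 1: cheap non-degenerations.} First I would verify the $\mathfrak{Der}$ column of Table~1. For each algebra this means solving the linear system $D(e_ie_j)=D(e_i)e_j+e_iD(e_j)$ for $D\in\mathfrak{gl}_4$. By Lemma~\ref{lem_der}, every pair with ${\rm Der}(\mathfrak{A})\ge{\rm Der}(\mathfrak{B})$ is a non-degeneration. Counting these pairs directly from the column should give $2994$.

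\emph{Layer 2: primary degenerations.} For each of the $160$ edges of the figure, I would exhibit a parametrized basis ${\rm E}^t_i$ (Table~2). Typical choices are rescalings $e_i\mapsto t^{k_i}e_i$ that kill an idempotent or a Peirce action. Others are combinations such as $e_1+e_2$ that merge two idempotents, or $t^{-1}(\dots)$ shifts that turn a semisimple part into a nilpotent square. For each, I check that the structure constants in the new basis have limits equal to those of $\mathfrak{B}$ as $t\to0$. I then compute the transitive closure of the graph and confirm that it contains exactly $749$ pairs, i.e.\ that the edges generate $589$ further degenerations.

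\emph{Layer 3: remaining non-degenerations.} The remaining pairs have ${\rm Der}(\mathfrak{A})<{\rm Der}(\mathfrak{B})$ and are not reachable in the graph. Among them I would isolate the primary ones: those pairs $\mathfrak{A}\not\to\mathfrak{B}$ for which no $C\to\mathfrak{A}$ and $\mathfrak{B}\to D$, at least one of them proper, already yield the non-degeneration $C\not\to D$. This should leave the $106$ pairs of Table~3. For each such pair I would first try the invariants of Lemma~\ref{lem_inv}: ${\rm Ann}(\mathfrak{A})>{\rm Ann}(\mathfrak{B})$, $\mathfrak{A}^2<\mathfrak{B}^2$, or an identity such as associativity, $(xy)z$ vanishing on the radical, or the radical being of a given nilpotency index. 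A semicontinuity argument on the radical dimension is not available in general, so where these fail I build a Borel-stable set $\mathcal{R}$. Such a set consists of flag conditions $A_iA_j\subseteq A_k$ together with polynomial relations among the structure constants, chosen to encode the Peirce-type structure of $\mathfrak{A}$ in an adapted flag, for instance ``$f_1f_1\in\alpha f_1+A_2$ with $f_1A_2\subseteq A_2$''. I would then apply Lemma~\ref{main}. The fact that $\mathfrak{B}\notin\mathcal{R}$ is shown cell by cell in the Bruhat decomposition, via Nullstellensatz or Gröbner certificates, since $\mathcal{R}$ is $\mathcal{B}$-stable. Propagating these by transitivity should give the $1407$ further non-degenerations, completing $2994+106+1407=4507$.

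The main obstacle is Layer 3. The hard cases are pairs among the non-nilpotent algebras with close $\mathfrak{Der}$ values, for example among $\mathfrak{J}_{20}$--$\mathfrak{J}_{27}$ or $\mathfrak{J}_{51}$--$\mathfrak{J}_{60}$, where the naive invariants coincide. There one must guess a set $\mathcal{R}$ that is fine enough to exclude $\mathfrak{B}$ but is still satisfied by all Borel translates of $\mathfrak{A}$. My first attempt in those cases would be conditions mixing the eigenvalue pattern of multiplication by $f_1$ on the flag (with eigenvalues $0,\frac12,1$) with nilpotency relations on $A_2$.
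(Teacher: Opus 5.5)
Your plan is essentially the paper's proof. Lemma~\ref{lem_der} settles the $2994$ pairs with ${\rm Der}(\mathfrak{A})\ge{\rm Der}(\mathfrak{B})$, and the $160$ parametrized bases of Table~2 together with transitivity give the $749$ degenerations. The $106$ primary non-degenerations of Table~3 are proved by Lemma~\ref{main}, using either Borel-stable sets $\mathcal{R}$ checked on the Bruhat cells or the ${\rm GL}(\mathbb{V})$-stable sets of Lemma~\ref{lem_inv}, and the remaining $1407$ follow by transitivity.

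One correction: the conditions you phrase in terms of the radical are not closed, because the radical jumps under degeneration (e.g.\ $\mathfrak{J}_{03}\to\mathfrak{J}_{04}$, and $\mathfrak{J}_{03}$ even degenerates to nilpotent algebras such as $\mathfrak{J}_{61}$). So they cannot be fed into Lemma~\ref{main}, and the ${\rm GL}(\mathbb{V})$-stable sets must instead come from identities on the whole algebra or from the ${\rm Ann}$ condition, exactly as in Table~3. Examples of such identities are $(xy)x=(xx)y$ for $\mathfrak{J}_{03}$ and $((xx)y)y+(xy)(xy)=2((xy)x)y$ for $\mathfrak{J}_{07}$.
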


\begin{proof}
By Lemma~\ref{lem_der} and transitivity, it is enough to prove that the arrows of the graph are degenerations (the primary degenerations) and that $A\not\to B$ for all primary non-degenerations $A\not\to B$.

\medskip

\noindent{\bf Degenerations.} The list of primary degenerations has $160$ elements. For each of them a parametrized basis is given in Table~2 (for the degenerations $\mathfrak{J}_{33}\to\mathfrak{J}_{73}$ and $\mathfrak{J}_{72}\to\mathfrak{J}_{73}$ to the algebra with zero multiplication one can take ${\rm E}^t_i=te_i$).

\begingroup
\small
\begin{longtable}{|c|>{\raggedright\arraybackslash}p{12.6cm}|}
\caption*{Table 2. {\it Primary degenerations of $4$-dimensional Jordan algebras.}}\\
\hline
\mbox{Degeneration} & \mbox{Parametrized basis} \\
\hline
\endfirsthead
\hline
\mbox{Degeneration} & \mbox{Parametrized basis} \\
\hline
\endhead
$\mathfrak{J}_{01} \to \mathfrak{J}_{07}$ & ${\rm E}^t_1=e_1$, \ ${\rm E}^t_2=e_2$, \ ${\rm E}^t_3=e_4$, \ ${\rm E}^t_4=te_3$ \\
\hline
$\mathfrak{J}_{01} \to \mathfrak{J}_{08}$ & ${\rm E}^t_1=e_1$, \ ${\rm E}^t_2=e_2$, \ ${\rm E}^t_3=e_3$, \ ${\rm E}^t_4=te_4$ \\
\hline
$\mathfrak{J}_{01} \to \mathfrak{J}_{23}$ & ${\rm E}^t_1=e_1$, \ ${\rm E}^t_2=e_4$, \ ${\rm E}^t_3=te_3$, \ ${\rm E}^t_4=t^{2}e_2$ \\
\hline
$\mathfrak{J}_{01} \to \mathfrak{J}_{25}$ & ${\rm E}^t_1=e_1 + e_4$, \ ${\rm E}^t_2=e_2$, \ ${\rm E}^t_3=te_3$, \ ${\rm E}^t_4=-t^{2}e_4$ \\
\hline
$\mathfrak{J}_{01} \to \mathfrak{J}_{53}$ & ${\rm E}^t_1=e_1 + e_4$, \ ${\rm E}^t_2=te_3$, \ ${\rm E}^t_3=-t^{2}e_4$, \ ${\rm E}^t_4=t^{2}e_2$ \\
\hline
$\mathfrak{J}_{02} \to \mathfrak{J}_{09}$ & ${\rm E}^t_1=\frac{1}{2}e_1 + \frac{1}{2}e_2$, \ ${\rm E}^t_2=\frac{1}{2}e_1 - \frac{1}{2}e_2$, \ ${\rm E}^t_3=e_3$, \ ${\rm E}^t_4=te_4$ \\
\hline
$\mathfrak{J}_{02} \to \mathfrak{J}_{49}$ & ${\rm E}^t_1=\frac{1}{2}e_1 + \frac{1}{2}e_2$, \ ${\rm E}^t_2=(\frac{1}{2}t^{4}+{\bf i}t)e_1$,\newline ${\rm E}^t_3=t^{2}e_4$, \ ${\rm E}^t_4=t^{-1}e_3 + (\frac{1}{2}t^{2}+{\bf i}t^{-1})e_4$ \\
\hline
$\mathfrak{J}_{03} \to \mathfrak{J}_{04}$ & ${\rm E}^t_1=e_1 + e_2$, \ ${\rm E}^t_2=e_3$, \ ${\rm E}^t_3=e_4$, \ ${\rm E}^t_4=te_2$ \\
\hline
\end{longtable}
\endgroup

\medskip

\noindent{\bf Non-degenerations.} The list of primary non-degenerations has $106$ elements; they are given in Table~3, and the proofs follow the scheme described in Section~\ref{prelim} (all of them were verified in Lean). For $\mathfrak{J}_{03}$, $\mathfrak{J}_{05}$, $\mathfrak{J}_{07}$, $\mathfrak{J}_{19}$ and $\mathfrak{J}_{49}$ the set $\mathcal{R}$ is ${\rm GL}(\mathbb{V})$-stable (Lemma~\ref{lem_inv}); in all other rows it is defined in a basis $f_1,\dots,f_4$ of $\mathbb{V}$.

\begingroup
\footnotesize
\renewcommand{\arraystretch}{1.3}\setlength{\tabcolsep}{2pt}
\begin{longtable}{|l|l|}
\caption*{Table 3. {\it Primary non-degenerations of $4$-dimensional Jordan algebras.}}\\
\hline
\mbox{Non-degeneration}  &  \mbox{Reason}\\
\hline
\endfirsthead
\hline
\mbox{Non-degeneration}  &  \mbox{Reason}\\
\hline
\endhead
$\mathfrak{J}_{01}\not\to \left\{\begin{array}{l}\mathfrak{J}_{30}, \mathfrak{J}_{31},\\ \mathfrak{J}_{32}, \mathfrak{J}_{33},\\ \mathfrak{J}_{35}, \mathfrak{J}_{61}\end{array}\right\}$ & $\begin{array}{@{}l@{}}
\mathcal{R}=\left\{A\ \middle|\ \begin{array}{l}A_{1}A_{4}\subseteq A_{4},\ A_{2}^2\subseteq A_{2},\ c_{23}^{2}=0,\\
2c_{12}^{2}+2c_{13}^{3}=c_{11}^{1},\ c_{22}^{2}+c_{23}^{3}=2c_{12}^{1},\ c_{33}^{3}=2c_{13}^{1},\\
c_{11}^{2}c_{24}^{4}+c_{11}^{3}c_{34}^{4}+c_{11}^{4}c_{44}^{4}+2c_{12}^{2}c_{14}^{4}+2c_{13}^{3}c_{14}^{4}=(c_{14}^{4})^{2},\\
c_{11}^{2}c_{12}^{2}+c_{11}^{3}c_{13}^{2}=0,\ c_{11}^{2}c_{12}^{3}+c_{11}^{3}c_{13}^{3}=0,\\
c_{22}^{2}c_{24}^{4}+c_{22}^{3}c_{34}^{4}+c_{22}^{4}c_{44}^{4}=(c_{24}^{4})^{2},\ c_{33}^{3}c_{34}^{4}+c_{33}^{4}c_{44}^{4}=(c_{34}^{4})^{2}\end{array}\right\}\\
\mathfrak{J}_{01}\in\mathcal{R} \mbox{ (take $f_1=e_{3}$, $f_2=e_{1}$, $f_3=e_{2}$, $f_4=e_{4}$)},\\
\mbox{but } \mathfrak{B}\cap\mathcal{R}=\varnothing \mbox{ for all $\mathfrak{B}$ in the left column}
\end{array}$ \\
\hline
$\mathfrak{J}_{02}\not\to \left\{\begin{array}{l}\mathfrak{J}_{32}, \mathfrak{J}_{33},\\ \mathfrak{J}_{34}, \mathfrak{J}_{69}\end{array}\right\}$ & $\begin{array}{@{}l@{}}
\mathcal{R}=\left\{A\ \middle|\ \begin{array}{l}A_{2}^2\subseteq A_{2},\ c_{11}^{2}=c_{11}^{3}=c_{12}^{3}=c_{13}^{2}=c_{14}^{2}=c_{14}^{4}=0,\\
c_{22}^{3}=c_{24}^{4}=c_{33}^{2}=c_{34}^{4}=c_{44}^{2}=0,\ 2c_{13}^{3}=c_{11}^{1},\\
c_{22}^{2}=2c_{12}^{1},\ c_{33}^{3}=2c_{13}^{1},\ c_{34}^{3}=c_{14}^{1},\ c_{44}^{4}=c_{14}^{1}\end{array}\right\}\\
\mathfrak{J}_{02}\in\mathcal{R} \mbox{ (take $f_1=e_{2}$, $f_2=e_{3}$, $f_3=e_{4}$, $f_4=e_{1}$)},\\
\mbox{but } \mathfrak{B}\cap\mathcal{R}=\varnothing \mbox{ for all $\mathfrak{B}$ in the left column}
\end{array}$ \\
\hline
$\mathfrak{J}_{03}\not\to \left\{\begin{array}{l}\mathfrak{J}_{28}, \mathfrak{J}_{30},\\ \mathfrak{J}_{31}, \mathfrak{J}_{32},\\ \mathfrak{J}_{33}, \mathfrak{J}_{64},\\ \mathfrak{J}_{65}\end{array}\right\}$ & $\begin{array}{@{}l@{}}
\mathcal{R}=\big\{A\mid (xy)x=(xx)y \mbox{ for all $x,y\in A$}\big\}\\
\mathfrak{J}_{03}\in\mathcal{R},\\
\mbox{but } \mathfrak{B}\cap\mathcal{R}=\varnothing \mbox{ for all $\mathfrak{B}$ in the left column:}\\
\mbox{take $x=e_1$, $y=e_2$ for $\mathfrak{J}_{28}$, $\mathfrak{J}_{30}$, $\mathfrak{J}_{32}$, $\mathfrak{J}_{33}$;}\\
\mbox{take $x=e_1$, $y=e_4$ for $\mathfrak{J}_{31}$, $\mathfrak{J}_{64}$, $\mathfrak{J}_{65}$}
\end{array}$ \\
\hline
$\mathfrak{J}_{05}\not\to \mathfrak{J}_{36}$ & $\begin{array}{@{}l@{}}
\mathcal{R}=\big\{A\mid {\rm dim}\,{\rm Ann}(A)\geq 1\big\}\\
\mathfrak{J}_{05}\in\mathcal{R},\\
\mbox{but } \mathfrak{J}_{36}\cap\mathcal{R}=\varnothing, \mbox{ as ${\rm dim}\,{\rm Ann}(\mathfrak{J}_{36})=0$}
\end{array}$ \\
\hline
$\mathfrak{J}_{06}\not\to \left\{\begin{array}{l}\mathfrak{J}_{30}, \mathfrak{J}_{32},\\ \mathfrak{J}_{33}, \mathfrak{J}_{36},\\ \mathfrak{J}_{37}, \mathfrak{J}_{44},\\ \mathfrak{J}_{51}, \mathfrak{J}_{52},\\ \mathfrak{J}_{55}, \mathfrak{J}_{61}\end{array}\right\}$ & $\begin{array}{@{}l@{}}
\mathcal{R}=\left\{A\ \middle|\ \begin{array}{l}A_{1}A_{3}\subseteq A_{3},\ c_{12}^{1}=c_{14}^{3}=c_{22}^{1}=c_{22}^{3}=c_{33}^{3}=c_{34}^{3}=0,\\
2c_{13}^{3}=c_{12}^{2},\ c_{11}^{1}c_{14}^{4}+c_{11}^{2}c_{24}^{4}+c_{11}^{3}c_{34}^{4}+c_{11}^{4}c_{44}^{4}=(c_{14}^{4})^{2},\\
c_{11}^{2}c_{12}^{3}=c_{11}^{3}c_{13}^{3},\ c_{13}^{3}c_{33}^{4}+c_{13}^{4}c_{34}^{4}=c_{14}^{4}c_{33}^{4},\\
c_{22}^{4}c_{44}^{4}+2c_{23}^{3}c_{24}^{4}=(c_{24}^{4})^{2},\ c_{23}^{3}c_{33}^{4}+c_{23}^{4}c_{34}^{4}=c_{24}^{4}c_{33}^{4}\end{array}\right\}\\
\mathfrak{J}_{06}\in\mathcal{R} \mbox{ (take $f_1=e_{2}$, $f_2=e_{1}$, $f_3=e_{4}$, $f_4=e_{3}$)},\\
\mbox{but } \mathfrak{B}\cap\mathcal{R}=\varnothing \mbox{ for all $\mathfrak{B}$ in the left column}
\end{array}$ \\
\hline
$\mathfrak{J}_{07}\not\to \left\{\begin{array}{l}\mathfrak{J}_{44}, \mathfrak{J}_{51},\\ \mathfrak{J}_{52}\end{array}\right\}$ & $\begin{array}{@{}l@{}}
\mathcal{R}=\big\{A\mid \big((xx)y\big)y+(xy)(xy)=2\big((xy)x\big)y \mbox{ for all $x,y\in A$}\big\}\\
\mathfrak{J}_{07}\in\mathcal{R},\\
\mbox{but } \mathfrak{B}\cap\mathcal{R}=\varnothing \mbox{ for all $\mathfrak{B}$ in the left column (take $x=e_1$, $y=e_2$)}
\end{array}$ \\
\hline
$\mathfrak{J}_{08}\not\to \left\{\begin{array}{l}\mathfrak{J}_{29}, \mathfrak{J}_{34},\\ \mathfrak{J}_{36}\end{array}\right\}$ & $\begin{array}{@{}l@{}}
\mathcal{R}=\left\{A\ \middle|\ \begin{array}{l}A_{1}A_{4}=0,\ c_{23}^{2}=0,\ 2c_{12}^{2}+2c_{13}^{3}=c_{11}^{1},\\
c_{22}^{2}+c_{23}^{3}=2c_{12}^{1},\ c_{33}^{3}=2c_{13}^{1}\end{array}\right\}\\
\mathfrak{J}_{08}\in\mathcal{R} \mbox{ (take $f_1=e_{3}$, $f_2=e_{1}$, $f_3=e_{2}$, $f_4=e_{4}$)},\\
\mbox{but } \mathfrak{B}\cap\mathcal{R}=\varnothing \mbox{ for all $\mathfrak{B}$ in the left column}
\end{array}$ \\
\hline
$\mathfrak{J}_{11}\not\to \mathfrak{J}_{64}$ & $\begin{array}{@{}l@{}}
\mathcal{R}=\left\{A\ \middle|\ \begin{array}{l}A_{1}A_{3}\subseteq A_{3},\ c_{14}^{3}=c_{24}^{3}=0,\\
c_{11}^{2}c_{22}^{4}+2c_{11}^{3}c_{23}^{4}+2c_{12}^{4}c_{13}^{3}=2c_{12}^{2}c_{12}^{4}+2c_{12}^{3}c_{13}^{4}\end{array}\right\}\\
\mathfrak{J}_{11}\in\mathcal{R} \mbox{ (take $f_i=e_i$)},\\
\mbox{but } \mathfrak{J}_{64}\cap\mathcal{R}=\varnothing
\end{array}$ \\
\hline
$\mathfrak{J}_{12}\not\to \left\{\begin{array}{l}\mathfrak{J}_{33}, \mathfrak{J}_{36},\\ \mathfrak{J}_{69}, \mathfrak{J}_{70}\end{array}\right\}$ & $\begin{array}{@{}l@{}}
\mathcal{R}=\left\{A\ \middle|\ \begin{array}{l}A_{1}A_{2}\subseteq A_{2},\ A_{1}A_{4}\subseteq A_{4},\ c_{22}^{3}=c_{22}^{4}=0,\\
c_{23}^{4}=c_{33}^{2}=c_{33}^{4}=c_{44}^{4}=0,\ c_{12}^{2}+c_{13}^{3}=3c_{14}^{4},\\
c_{11}^{4}c_{13}^{2}=c_{11}^{2}c_{13}^{4},\ c_{11}^{4}c_{12}^{3}=c_{11}^{3}c_{12}^{4}\end{array}\right\}\\
\mathfrak{J}_{12}\in\mathcal{R} \mbox{ (take $f_1=e_{2}$, $f_2=e_{3}$, $f_3=e_{1}$, $f_4=e_{4}$)},\\
\mbox{but } \mathfrak{B}\cap\mathcal{R}=\varnothing \mbox{ for all $\mathfrak{B}$ in the left column}
\end{array}$ \\
\hline
$\mathfrak{J}_{13}\not\to \left\{\begin{array}{l}\mathfrak{J}_{30}, \mathfrak{J}_{31},\\ \mathfrak{J}_{33}, \mathfrak{J}_{34},\\ \mathfrak{J}_{36}, \mathfrak{J}_{67},\\ \mathfrak{J}_{68}, \mathfrak{J}_{70}\end{array}\right\}$ & $\begin{array}{@{}l@{}}
\mathcal{R}=\left\{A\ \middle|\ \begin{array}{l}A_{1}A_{4}\subseteq A_{4},\ A_{3}^2=0,\ c_{12}^{2}=c_{13}^{2}=c_{22}^{1}=c_{22}^{3}=c_{22}^{4}=c_{23}^{4}=0,\\
2c_{13}^{3}+2c_{14}^{4}=c_{11}^{1},\ c_{22}^{2}=c_{12}^{1},\ 2c_{23}^{3}=c_{12}^{1},\ 2c_{24}^{4}=c_{12}^{1},\\
c_{11}^{2}c_{12}^{3}+c_{11}^{3}c_{13}^{3}=0,\ c_{11}^{2}c_{12}^{4}+c_{11}^{3}c_{13}^{4}+c_{11}^{4}c_{14}^{4}=0\end{array}\right\}\\
\mathfrak{J}_{13}\in\mathcal{R} \mbox{ (take $f_1=e_{1}$, $f_2=e_{1}+e_{2}$, $f_3=e_{3}$, $f_4=e_{4}$)},\\
\mbox{but } \mathfrak{B}\cap\mathcal{R}=\varnothing \mbox{ for all $\mathfrak{B}$ in the left column}
\end{array}$ \\
\hline
$\mathfrak{J}_{15}\not\to \left\{\begin{array}{l}\mathfrak{J}_{34}, \mathfrak{J}_{64}\end{array}\right\}$ & $\begin{array}{@{}l@{}}
\mathcal{R}=\left\{A\ \middle|\ \begin{array}{l}A_{1}A_{3}\subseteq A_{3},\ c_{14}^{3}=c_{24}^{3}=c_{33}^{3}=c_{44}^{4}=0,\ c_{13}^{3}=c_{11}^{1},\ 2c_{24}^{4}=c_{22}^{2},\\
c_{11}^{2}c_{22}^{4}+2c_{11}^{3}c_{23}^{4}+2c_{11}^{4}c_{24}^{4}+2c_{12}^{4}c_{13}^{3}=2c_{12}^{3}c_{13}^{4}+4c_{12}^{4}c_{14}^{4}\end{array}\right\}\\
\mathfrak{J}_{15}\in\mathcal{R} \mbox{ (take $f_i=e_i$)},\\
\mbox{but } \mathfrak{B}\cap\mathcal{R}=\varnothing \mbox{ for all $\mathfrak{B}$ in the left column}
\end{array}$ \\
\hline
$\mathfrak{J}_{16}\not\to \left\{\begin{array}{l}\mathfrak{J}_{32}, \mathfrak{J}_{33},\\ \mathfrak{J}_{34}, \mathfrak{J}_{36},\\ \mathfrak{J}_{67}, \mathfrak{J}_{68},\\ \mathfrak{J}_{70}\end{array}\right\}$ & $\begin{array}{@{}l@{}}
\mathcal{R}=\left\{A\ \middle|\ \begin{array}{l}A_{1}A_{4}\subseteq A_{4},\ A_{3}^2=0,\ c_{22}^{3}=c_{22}^{4}=c_{23}^{4}=0,\\
c_{12}^{1}=c_{13}^{2}=c_{22}^{1}=0,\ c_{11}^{1}+c_{12}^{2}=2c_{13}^{3},\\
2c_{14}^{4}=c_{12}^{2},\ c_{11}^{2}c_{12}^{4}+c_{11}^{3}c_{13}^{4}=c_{11}^{4}c_{14}^{4},\\
2c_{24}^{4}=c_{22}^{2},\ c_{11}^{2}c_{12}^{3}+c_{11}^{3}c_{13}^{3}=2c_{11}^{3}c_{14}^{4}\end{array}\right\}\\
\mathfrak{J}_{16}\in\mathcal{R} \mbox{ (take $f_1=e_{2}$, $f_2=e_{1}$, $f_3=e_{3}$, $f_4=e_{4}$)},\\
\mbox{but } \mathfrak{B}\cap\mathcal{R}=\varnothing \mbox{ for all $\mathfrak{B}$ in the left column}
\end{array}$ \\
\hline
$\mathfrak{J}_{18}\not\to \mathfrak{J}_{70}$ & $\begin{array}{@{}l@{}}
\mathcal{R}=\big\{A\mid A_{3}^2=0,\ c_{22}^{3}=c_{22}^{4}=c_{23}^{4}=c_{24}^{3}=0\big\}\\
\mathfrak{J}_{18}\in\mathcal{R} \mbox{ (take $f_i=e_i$)},\\
\mbox{but } \mathfrak{J}_{70}\cap\mathcal{R}=\varnothing
\end{array}$ \\
\hline
$\mathfrak{J}_{19}\not\to \mathfrak{J}_{70}$ & $\begin{array}{@{}l@{}}
\mathcal{R}=\big\{A\mid {\rm dim}\,{\rm Ann}(A)\geq 2\big\}\\
\mathfrak{J}_{19}\in\mathcal{R},\\
\mbox{but } \mathfrak{J}_{70}\cap\mathcal{R}=\varnothing, \mbox{ as ${\rm dim}\,{\rm Ann}(\mathfrak{J}_{70})=1$}
\end{array}$ \\
\hline
$\mathfrak{J}_{20}\not\to \mathfrak{J}_{19}$ & $\begin{array}{@{}l@{}}
\mathcal{R}=\big\{A\mid c_{22}^{2}=c_{44}^{4}=0,\ c_{12}^{2}=c_{11}^{1}\big\}\\
\mathfrak{J}_{20}\in\mathcal{R} \mbox{ (take $f_1=e_{1}$, $f_2=e_{3}$, $f_3=e_{2}$, $f_4=e_{4}$)},\\
\mbox{but } \mathfrak{J}_{19}\cap\mathcal{R}=\varnothing
\end{array}$ \\
\hline
$\mathfrak{J}_{21}\not\to \mathfrak{J}_{70}$ & $\begin{array}{@{}l@{}}
\mathcal{R}=\big\{A\mid A_{2}A_{3}=0,\ c_{13}^{4}=c_{14}^{3}=0\big\}\\
\mathfrak{J}_{21}\in\mathcal{R} \mbox{ (take $f_i=e_i$)},\\
\mbox{but } \mathfrak{J}_{70}\cap\mathcal{R}=\varnothing
\end{array}$ \\
\hline
$\mathfrak{J}_{22}\not\to \left\{\begin{array}{l}\mathfrak{J}_{34}, \mathfrak{J}_{37}\end{array}\right\}$ & $\begin{array}{@{}l@{}}
\mathcal{R}=\big\{A\mid c_{12}^{1}=c_{23}^{3}=c_{33}^{3}=c_{44}^{4}=0,\ c_{13}^{3}=c_{11}^{1},\ c_{14}^{4}=c_{12}^{2},\ c_{24}^{4}=c_{22}^{2}\big\}\\
\mathfrak{J}_{22}\in\mathcal{R} \mbox{ (take $f_i=e_i$)},\\
\mbox{but } \mathfrak{B}\cap\mathcal{R}=\varnothing \mbox{ for all $\mathfrak{B}$ in the left column}
\end{array}$ \\
\hline
$\mathfrak{J}_{23}\not\to \left\{\begin{array}{l}\mathfrak{J}_{36}, \mathfrak{J}_{37},\\ \mathfrak{J}_{51}, \mathfrak{J}_{65}\end{array}\right\}$ & $\begin{array}{@{}l@{}}
\mathcal{R}=\left\{A\ \middle|\ \begin{array}{l}A_{1}A_{2}\subseteq A_{2},\ A_{1}A_{4}=0,\ c_{22}^{3}=c_{33}^{2}=c_{33}^{3}=0,\\
2c_{13}^{3}=c_{12}^{2},\ c_{11}^{1}c_{12}^{4}+c_{11}^{2}c_{22}^{4}+c_{11}^{3}c_{23}^{4}=2c_{12}^{4}c_{13}^{3},\\
c_{11}^{1}c_{13}^{4}+c_{11}^{2}c_{23}^{4}+c_{11}^{3}c_{33}^{4}=2c_{13}^{3}c_{13}^{4}\end{array}\right\}\\
\mathfrak{J}_{23}\in\mathcal{R} \mbox{ (take $f_1=e_{2}$, $f_2=e_{1}$, $f_3=e_{3}$, $f_4=e_{4}$)},\\
\mbox{but } \mathfrak{B}\cap\mathcal{R}=\varnothing \mbox{ for all $\mathfrak{B}$ in the left column}
\end{array}$ \\
\hline
$\mathfrak{J}_{24}\not\to \left\{\begin{array}{l}\mathfrak{J}_{28}, \mathfrak{J}_{30},\\ \mathfrak{J}_{32}, \mathfrak{J}_{33},\\ \mathfrak{J}_{35}, \mathfrak{J}_{36},\\ \mathfrak{J}_{37}, \mathfrak{J}_{52},\\ \mathfrak{J}_{61}, \mathfrak{J}_{65}\end{array}\right\}$ & $\begin{array}{@{}l@{}}
\mathcal{R}=\left\{A\ \middle|\ \begin{array}{l}A_{1}A_{2}\subseteq A_{2},\ A_{1}A_{4}\subseteq A_{4},\ c_{22}^{3}=c_{33}^{2}=c_{33}^{3}=c_{44}^{4}=0,\\
2c_{13}^{3}=c_{12}^{2},\ c_{14}^{4}=c_{12}^{2},\ 2c_{23}^{3}=c_{22}^{2},\ c_{24}^{4}=c_{22}^{2},\\
c_{11}^{1}c_{12}^{4}+c_{11}^{2}c_{22}^{4}+c_{11}^{3}c_{23}^{4}+c_{11}^{4}c_{24}^{4}=2c_{12}^{3}c_{13}^{4}+2c_{12}^{4}c_{14}^{4},\\
c_{11}^{1}c_{13}^{4}+c_{11}^{2}c_{23}^{4}+c_{11}^{3}c_{33}^{4}=c_{13}^{4}c_{14}^{4}\end{array}\right\}\\
\mathfrak{J}_{24}\in\mathcal{R} \mbox{ (take $f_1=e_{2}$, $f_2=e_{1}$, $f_3=e_{3}$, $f_4=e_{4}$)},\\
\mbox{but } \mathfrak{B}\cap\mathcal{R}=\varnothing \mbox{ for all $\mathfrak{B}$ in the left column}
\end{array}$ \\
\hline
$\mathfrak{J}_{25}\not\to \left\{\begin{array}{l}\mathfrak{J}_{34}, \mathfrak{J}_{37},\\ \mathfrak{J}_{52}, \mathfrak{J}_{65}\end{array}\right\}$ & $\begin{array}{@{}l@{}}
\mathcal{R}=\left\{A\ \middle|\ \begin{array}{l}A_{1}A_{3}\subseteq A_{3},\ c_{44}^{4}=0,\ c_{11}^{1}+c_{12}^{2}=2c_{13}^{3},\\
c_{12}^{1}=c_{14}^{3}=c_{22}^{1}=0,\ 2c_{23}^{3}=c_{22}^{2},\\
c_{22}^{3}=c_{24}^{3}=c_{33}^{3}=0,\ c_{14}^{4}=c_{11}^{1},\\
c_{11}^{2}c_{22}^{4}+c_{11}^{3}c_{23}^{4}+c_{12}^{4}c_{14}^{4}=2c_{12}^{4}c_{13}^{3},\\
c_{11}^{2}c_{23}^{4}+c_{11}^{3}c_{33}^{4}+c_{13}^{4}c_{14}^{4}=2c_{13}^{3}c_{13}^{4}\end{array}\right\}\\
\mathfrak{J}_{25}\in\mathcal{R} \mbox{ (take $f_i=e_i$)},\\
\mbox{but } \mathfrak{B}\cap\mathcal{R}=\varnothing \mbox{ for all $\mathfrak{B}$ in the left column}
\end{array}$ \\
\hline
$\mathfrak{J}_{26}\not\to \mathfrak{J}_{37}$ & $\begin{array}{@{}l@{}}
\mathcal{R}=\big\{A\mid A_{1}A_{3}\subseteq A_{4},\ c_{14}^{4}=c_{24}^{4}=0\big\}\\
\mathfrak{J}_{26}\in\mathcal{R} \mbox{ (take $f_i=e_i$)},\\
\mbox{but } \mathfrak{J}_{37}\cap\mathcal{R}=\varnothing
\end{array}$ \\
\hline
$\mathfrak{J}_{27}\not\to \mathfrak{J}_{35}$ & $\begin{array}{@{}l@{}}
\mathcal{R}=\big\{A\mid A_{1}A_{4}\subseteq A_{4},\ c_{22}^{2}=c_{33}^{3}=0,\ c_{12}^{2}=c_{11}^{1},\ c_{13}^{3}=c_{11}^{1}\big\}\\
\mathfrak{J}_{27}\in\mathcal{R} \mbox{ (take $f_1=e_{1}$, $f_2=e_{3}$, $f_3=e_{4}$, $f_4=e_{2}$)},\\
\mbox{but } \mathfrak{J}_{35}\cap\mathcal{R}=\varnothing
\end{array}$ \\
\hline
$\mathfrak{J}_{29}\not\to \left\{\begin{array}{l}\mathfrak{J}_{68}, \mathfrak{J}_{70}\end{array}\right\}$ & $\begin{array}{@{}l@{}}
\mathcal{R}=\big\{A\mid A_{2}^2=0\big\}\\
\mathfrak{J}_{29}\in\mathcal{R} \mbox{ (take $f_i=e_i$)},\\
\mbox{but } \mathfrak{B}\cap\mathcal{R}=\varnothing \mbox{ for all $\mathfrak{B}$ in the left column}
\end{array}$ \\
\hline
$\mathfrak{J}_{35}\not\to \mathfrak{J}_{68}$ & $\begin{array}{@{}l@{}}
\mathcal{R}=\big\{A\mid A_{2}^2=0\big\}\\
\mathfrak{J}_{35}\in\mathcal{R} \mbox{ (take $f_i=e_i$)},\\
\mbox{but } \mathfrak{J}_{68}\cap\mathcal{R}=\varnothing
\end{array}$ \\
\hline
$\mathfrak{J}_{37}\not\to \mathfrak{J}_{68}$ & $\begin{array}{@{}l@{}}
\mathcal{R}=\big\{A\mid A_{2}^2=0\big\}\\
\mathfrak{J}_{37}\in\mathcal{R} \mbox{ (take $f_i=e_i$)},\\
\mbox{but } \mathfrak{J}_{68}\cap\mathcal{R}=\varnothing
\end{array}$ \\
\hline
$\mathfrak{J}_{44}\not\to \left\{\begin{array}{l}\mathfrak{J}_{67}, \mathfrak{J}_{70}\end{array}\right\}$ & $\begin{array}{@{}l@{}}
\mathcal{R}=\big\{A\mid A_{1}A_{3}=0,\ c_{11}^{2}=c_{22}^{1}=0\big\}\\
\mathfrak{J}_{44}\in\mathcal{R} \mbox{ (take $f_i=e_i$)},\\
\mbox{but } \mathfrak{B}\cap\mathcal{R}=\varnothing \mbox{ for all $\mathfrak{B}$ in the left column}
\end{array}$ \\
\hline
$\mathfrak{J}_{49}\not\to \mathfrak{J}_{36}$ & $\begin{array}{@{}l@{}}
\mathcal{R}=\big\{A\mid {\rm dim}\,{\rm Ann}(A)\geq 1\big\}\\
\mathfrak{J}_{49}\in\mathcal{R},\\
\mbox{but } \mathfrak{J}_{36}\cap\mathcal{R}=\varnothing, \mbox{ as ${\rm dim}\,{\rm Ann}(\mathfrak{J}_{36})=0$}
\end{array}$ \\
\hline
$\mathfrak{J}_{50}\not\to \left\{\begin{array}{l}\mathfrak{J}_{28}, \mathfrak{J}_{31}\end{array}\right\}$ & $\begin{array}{@{}l@{}}
\mathcal{R}=\big\{A\mid c_{22}^{2}=c_{33}^{3}=c_{44}^{4}=0,\ 2c_{13}^{3}=c_{11}^{1},\ 2c_{14}^{4}=c_{11}^{1}\big\}\\
\mathfrak{J}_{50}\in\mathcal{R} \mbox{ (take $f_i=e_i$)},\\
\mbox{but } \mathfrak{B}\cap\mathcal{R}=\varnothing \mbox{ for all $\mathfrak{B}$ in the left column}
\end{array}$ \\
\hline
$\mathfrak{J}_{53}\not\to \left\{\begin{array}{l}\mathfrak{J}_{28}, \mathfrak{J}_{34},\\ \mathfrak{J}_{36}, \mathfrak{J}_{37}\end{array}\right\}$ & $\begin{array}{@{}l@{}}
\mathcal{R}=\big\{A\mid c_{22}^{2}=c_{33}^{3}=c_{44}^{4}=0,\ 2c_{12}^{2}=c_{11}^{1},\ c_{13}^{3}=c_{11}^{1}\big\}\\
\mathfrak{J}_{53}\in\mathcal{R} \mbox{ (take $f_i=e_i$)},\\
\mbox{but } \mathfrak{B}\cap\mathcal{R}=\varnothing \mbox{ for all $\mathfrak{B}$ in the left column}
\end{array}$ \\
\hline
$\mathfrak{J}_{54}\not\to \mathfrak{J}_{34}$ & $\begin{array}{@{}l@{}}
\mathcal{R}=\big\{A\mid c_{22}^{2}=c_{33}^{3}=c_{44}^{4}=0,\ c_{13}^{3}=c_{11}^{1}\big\}\\
\mathfrak{J}_{54}\in\mathcal{R} \mbox{ (take $f_i=e_i$)},\\
\mbox{but } \mathfrak{J}_{34}\cap\mathcal{R}=\varnothing
\end{array}$ \\
\hline
$\mathfrak{J}_{55}\not\to \left\{\begin{array}{l}\mathfrak{J}_{67}, \mathfrak{J}_{70}\end{array}\right\}$ & $\begin{array}{@{}l@{}}
\mathcal{R}=\big\{A\mid A_{2}A_{3}=0,\ c_{11}^{2}=c_{13}^{2}=c_{13}^{4}=c_{14}^{2}=c_{14}^{3}=c_{22}^{1}=0\big\}\\
\mathfrak{J}_{55}\in\mathcal{R} \mbox{ (take $f_1=e_{1}$, $f_2=e_{4}$, $f_3=e_{2}$, $f_4=e_{3}$)},\\
\mbox{but } \mathfrak{B}\cap\mathcal{R}=\varnothing \mbox{ for all $\mathfrak{B}$ in the left column}
\end{array}$ \\
\hline
$\mathfrak{J}_{57}\not\to \left\{\begin{array}{l}\mathfrak{J}_{29}, \mathfrak{J}_{34}\end{array}\right\}$ & $\begin{array}{@{}l@{}}
\mathcal{R}=\big\{A\mid c_{22}^{2}=c_{33}^{3}=c_{44}^{4}=0,\ c_{12}^{2}=c_{11}^{1},\ c_{14}^{4}=c_{11}^{1}\big\}\\
\mathfrak{J}_{57}\in\mathcal{R} \mbox{ (take $f_1=e_{1}$, $f_2=e_{2}$, $f_3=e_{4}$, $f_4=e_{3}$)},\\
\mbox{but } \mathfrak{B}\cap\mathcal{R}=\varnothing \mbox{ for all $\mathfrak{B}$ in the left column}
\end{array}$ \\
\hline
$\mathfrak{J}_{59}\not\to \left\{\begin{array}{l}\mathfrak{J}_{30}, \mathfrak{J}_{33},\\ \mathfrak{J}_{34}, \mathfrak{J}_{36},\\ \mathfrak{J}_{69}\end{array}\right\}$ & $\begin{array}{@{}l@{}}
\mathcal{R}=\left\{A\ \middle|\ \begin{array}{l}A_{2}^2\subseteq A_{4},\ c_{11}^{2}=c_{11}^{3}=c_{12}^{3}=c_{13}^{2}=0,\\
c_{14}^{2}=c_{14}^{3}=c_{44}^{4}=0,\ 2c_{13}^{3}=c_{11}^{1},\ c_{14}^{4}=c_{11}^{1}\end{array}\right\}\\
\mathfrak{J}_{59}\in\mathcal{R} \mbox{ (take $f_1=e_{1}$, $f_2=e_{3}$, $f_3=e_{4}$, $f_4=e_{2}$)},\\
\mbox{but } \mathfrak{B}\cap\mathcal{R}=\varnothing \mbox{ for all $\mathfrak{B}$ in the left column}
\end{array}$ \\
\hline
$\mathfrak{J}_{60}\not\to \mathfrak{J}_{28}$ & $\begin{array}{@{}l@{}}
\mathcal{R}=\big\{A\mid c_{22}^{2}=c_{33}^{3}=c_{44}^{4}=0,\ 2c_{13}^{3}=c_{11}^{1},\ 2c_{14}^{4}=c_{11}^{1}\big\}\\
\mathfrak{J}_{60}\in\mathcal{R} \mbox{ (take $f_i=e_i$)},\\
\mbox{but } \mathfrak{J}_{28}\cap\mathcal{R}=\varnothing
\end{array}$ \\
\hline
\end{longtable}
\endgroup

\noindent This completes the proof of Theorem~A.
\end{proof}

\begin{remark}\label{rem_papers}
We have checked the results on degenerations of $3$- and $4$-dimensional Jordan algebras obtained in {\rm \cite{contr11,KS07,gkp,KE14}}. The following results can not be used in our work, because they are wrong (or, for $3$-dimensional algebras, they do not imply the corresponding results in dimension $4$).
\begin{enumerate}
\item {\it $4$-dimensional nilpotent Jordan algebras} {\rm \cite{contr11}}. For every $k$ with $1\le k\le 13$, the algebra $\varphi_k$ of {\rm \cite{contr11}} is $\mathfrak{J}_{60+k}$ (with the same multiplication table). The following statements of {\rm \cite{contr11}} are false.
\begin{enumerate}
\item The non-degenerations $\varphi_1\not\to\varphi_6$ and $\varphi_1\not\to\varphi_8$, i.e. $\mathfrak{J}_{61}\not\to\mathfrak{J}_{66}$ and $\mathfrak{J}_{61}\not\to\mathfrak{J}_{68}$ (proof of Theorem~3.6 of {\rm \cite{contr11}}). In fact, $\mathfrak{J}_{61}\to\mathfrak{J}_{66}$ with the parametrized basis ${\rm E}^t_1=e_1$, ${\rm E}^t_2=e_2$, ${\rm E}^t_3=e_3$, ${\rm E}^t_4=t^{-1}e_2-t^{-3}e_3+t^{-5}e_4$ (Table~2), and $\mathfrak{J}_{66}\to\mathfrak{J}_{68}$ ($\varphi_6\to\varphi_8$ is proved in {\rm \cite{contr11}} itself).
\item The contraction $\varphi_5\to\varphi_4$, i.e. $\mathfrak{J}_{65}\to\mathfrak{J}_{64}$ (Figure~1 of {\rm \cite{contr11}}). In fact, $\mathfrak{J}_{11}\to\mathfrak{J}_{65}$ (Table~2) and $\mathfrak{J}_{11}\not\to\mathfrak{J}_{64}$ (by Theorem~A), hence $\mathfrak{J}_{65}\not\to\mathfrak{J}_{64}$.
\item The contraction $\varphi_4\to\varphi_{10}$, i.e. $\mathfrak{J}_{64}\to\mathfrak{J}_{70}$. In fact, $\mathfrak{J}_{29}\to\mathfrak{J}_{64}$ (Table~2) and $\mathfrak{J}_{29}\not\to\mathfrak{J}_{70}$ (by Theorem~A), hence $\mathfrak{J}_{64}\not\to\mathfrak{J}_{70}$; the structure constants of the family given in {\rm \cite{contr11}} are not bounded as $t\to0$.
\item The family given for $\varphi_{10}\to\varphi_{11}$ has the limit $E_1E_1=E_1E_3=E_3E_3=E_2$, which is isomorphic to $\varphi_{12}=\mathfrak{J}_{72}$ and not to $\varphi_{11}$ (the degeneration $\mathfrak{J}_{70}\to\mathfrak{J}_{71}$ itself is true, see Table~2).
\end{enumerate}
Consequently, the description of degenerations of $4$-dimensional nilpotent Jordan algebras (Figure~1 and Corollary~3.7 of {\rm \cite{contr11}}) is not correct; the statement that the variety of $4$-dimensional nilpotent Jordan algebras has two irreducible components (Theorem~3.6 of {\rm \cite{contr11}}) is true.
\item {\it $3$-dimensional Jordan algebras} {\rm \cite{KS07,gkp}}. The degenerations and the irreducible components of the variety of $3$-dimensional Jordan algebras found in {\rm \cite{KS07,gkp}} are correct, but some details are wrong: in {\rm \cite{gkp}} the parametrized basis for $\mathbb{T}_{03}\to\mathbb{T}_{15}$ does not give this degeneration, since the structure constant $c_{22}^3(t)=-t^{-1}$ is unbounded, and the basis for $\mathbb{T}_{16}\to\mathbb{T}_{19}$ gives $E_1^2=-E_2$; in {\rm \cite{KS07}} the dimensions of the automorphism groups of $\mathcal{J}_7$, $\mathcal{A}_7$, $\mathcal{A}_8$ are given as $1$, $6$, $8$ (the correct values are $2$, $5$, $9$), and in the degeneration $\mathcal{J}_1\to\mathcal{J}_6$ the limit algebra is written as $\mathcal{J}_4$.
More importantly, a degeneration $\mathfrak{A}\to\mathfrak{B}$ of $3$-dimensional algebras gives the degeneration $\mathfrak{A}\oplus\mathfrak{D}\to\mathfrak{B}\oplus\mathfrak{D}$, but a non-degeneration $\mathfrak{A}\not\to\mathfrak{B}$ does {\bf not} imply $\mathfrak{A}\oplus\mathfrak{D}\not\to\mathfrak{B}\oplus\mathfrak{D}$. For example, $\mathbb{T}_{10}\not\to\mathbb{T}_{17}$ {\rm \cite{gkp}}, but $\mathfrak{J}_{51}=\mathbb{T}_{10}\oplus\mathbb{C}n_3\to\mathfrak{J}_{29}\to\mathfrak{J}_{64}\to\mathfrak{J}_{67}=\mathbb{T}_{17}\oplus\mathbb{C}n_4$; there are $10$ such pairs of $3$-dimensional algebras. Hence the non-degenerations of {\rm \cite{KS07,gkp}} can not be used for $4$-dimensional algebras.
\item {\it $4$-dimensional Jordan algebras} {\rm \cite{KE14}}. All $63$ degenerations stated in {\rm \cite{KE14}} are true, all explicit parametrized bases given there are correct, and the list of rigid algebras (Theorem~4.1 of {\rm \cite{KE14}}) agrees with Corollary~A. However, {\rm \cite{KE14}} gives only a part of the degenerations (a family of degenerations which is sufficient to find the irreducible components) and almost no non-degenerations; the degenerations of nilpotent algebras are quoted there from {\rm \cite{contr11}}, whose description is not correct (see above); and the degenerations $\mathfrak{J}_{38}\to\mathfrak{J}_{66}$ and $\mathfrak{J}_{61}\to\mathfrak{J}_{67}$ are not primary. So the results of {\rm \cite{KE14}} do not give the graph of degenerations.
\end{enumerate}
For these reasons we give the complete description of degenerations and non-degenerations of $4$-dimensional Jordan algebras ``from scratch'': all primary degenerations (Table~2) and all primary non-degenerations (Table~3) are proved directly, and the results of {\rm \cite{contr11,KS07,gkp,KE14}} are not used in the proofs.
\end{remark}

\clearpage
\newgeometry{layoutwidth=297mm,layoutheight=210mm,left=1in,right=1in,top=1in,bottom=1in}
\pdfpagewidth=297mm \pdfpageheight=210mm \setlength{\headwidth}{\textwidth}
{\scriptsize
\begin{center} 
\resizebox{!}{0.86\textheight}{%
\begin{tikzpicture}[-, >=stealth', shorten >=0.005cm, auto,node distance=0.9cm, thick,
                    main  node/.style={rectangle,draw,fill=gray!12,rounded corners=1.5ex,font=\sffamily \tiny \bfseries },
                    rigid node/.style={rectangle,draw,fill=black!20,rounded corners=1.5ex,font=\sffamily \bf \bfseries }, 
                    lev0/.style={draw=red!85!black},
                    lev1/.style={draw=blue!85!black},
                    lev2/.style={draw=green!55!black},
                    lev3/.style={draw=orange!95!black},
                    lev4/.style={draw=violet},
                    lev5/.style={draw=cyan!65!black},
                    lev6/.style={draw=brown},
                    lev7/.style={draw=magenta!85!black},
                    lev8/.style={draw=olive!90!black},
                    lev9/.style={draw=black},
                    lev10/.style={draw=gray!65!black},
                    lev12/.style={draw=pink!85!black},
                    lev16/.style={draw=black},
                    style={draw,font=\sffamily \scriptsize \bfseries }]
\node [text=red!85!black] (00) at (0,0.00) {0};
\node [text=blue!85!black] (01) at (0,-0.90) {1};
\node [text=green!55!black] (02) at (0,-1.80) {2};
\node [text=orange!95!black] (03) at (0,-3.60) {3};
\node [text=violet] (04) at (0,-5.40) {4};
\node [text=cyan!65!black] (05) at (0,-7.20) {5};
\node [text=brown] (06) at (0,-9.00) {6};
\node [text=magenta!85!black] (07) at (0,-10.17) {7};
\node [text=olive!90!black] (08) at (0,-11.34) {8};
\node [text=black] (09) at (0,-12.24) {9};
\node [text=gray!65!black] (010) at (0,-13.14) {10};
\node [text=pink!85!black] (012) at (0,-14.04) {12};
\node [text=black] (016) at (0,-14.94) {16};
\node  [rigid node] (j01) at (13.61,-0.90) {${01}$};
\node  [rigid node] (j02) at (21.43,-3.60) {${02}$};
\node  [rigid node] (j03) at (12.30,0.00) {${03}$};
\node  [main node] (j04) at (11.00,-0.90) {${04}$};
\node  [main node] (j05) at (12.30,-0.90) {${05}$};
\node  [rigid node] (j06) at (13.61,-1.80) {${06}$};
\node  [main node] (j07) at (11.00,-1.80) {${07}$};
\node  [main node] (j08) at (12.30,-1.80) {${08}$};
\node  [main node] (j09) at (20.13,-5.40) {${09}$};
\node  [main node] (j10) at (12.30,-3.60) {${10}$};
\node  [main node] (j11) at (8.38,-3.60) {${11}$};
\node  [rigid node] (j12) at (14.26,-9.00) {${12}$};
\node  [rigid node] (j13) at (18.82,-5.40) {${13}$};
\node  [main node] (j14) at (16.22,-3.60) {${14}$};
\node  [main node] (j15) at (13.61,-3.60) {${15}$};
\node  [rigid node] (j16) at (16.22,-5.40) {${16}$};
\node  [main node] (j17) at (11.00,-3.60) {${17}$};
\node  [main node] (j18) at (15.56,-9.00) {${18}$};
\node  [main node] (j19) at (5.78,-5.40) {${19}$};
\node  [main node] (j20) at (8.39,-1.80) {${20}$};
\node  [main node] (j21) at (1.86,-5.40) {${21}$};
\node  [main node] (j22) at (7.08,-1.80) {${22}$};
\node  [main node] (j23) at (17.52,-1.80) {${23}$};
\node  [rigid node] (j24) at (18.82,-1.80) {${24}$};
\node  [main node] (j25) at (14.91,-1.80) {${25}$};
\node  [main node] (j26) at (9.69,-1.80) {${26}$};
\node  [main node] (j27) at (5.78,-1.80) {${27}$};
\node  [main node] (j28) at (11.65,-9.00) {${28}$};
\node  [main node] (j29) at (12.30,-5.40) {${29}$};
\node  [main node] (j30) at (12.95,-10.17) {${30}$};
\node  [main node] (j31) at (12.95,-9.00) {${31}$};
\node  [main node] (j32) at (14.26,-10.17) {${32}$};
\node  [rigid node] (j33) at (12.30,-14.04) {${33}$};
\node  [main node] (j34) at (11.65,-12.24) {${34}$};
\node  [main node] (j35) at (11.65,-7.20) {${35}$};
\node  [main node] (j36) at (12.95,-12.24) {${36}$};
\node  [main node] (j37) at (9.04,-7.20) {${37}$};
\node  [main node] (j38) at (5.78,-3.60) {${38}$};
\node  [main node] (j39) at (3.17,-3.60) {${39}$};
\node  [main node] (j40) at (7.73,-7.20) {${40}$};
\node  [main node] (j41) at (7.08,-5.40) {${41}$};
\node  [main node] (j42) at (6.43,-7.20) {${42}$};
\node  [main node] (j43) at (4.47,-5.40) {${43}$};
\node  [main node] (j44) at (14.91,-5.40) {${44}$};
\node  [main node] (j45) at (14.91,-3.60) {${45}$};
\node  [main node] (j46) at (9.69,-5.40) {${46}$};
\node  [main node] (j47) at (7.08,-3.60) {${47}$};
\node  [main node] (j48) at (16.87,-7.20) {${48}$};
\node  [main node] (j49) at (21.43,-5.40) {${49}$};
\node  [main node] (j50) at (14.26,-7.20) {${50}$};
\node  [main node] (j51) at (17.52,-3.60) {${51}$};
\node  [main node] (j52) at (18.82,-3.60) {${52}$};
\node  [main node] (j53) at (16.22,-1.80) {${53}$};
\node  [main node] (j54) at (4.47,-3.60) {${54}$};
\node  [main node] (j55) at (17.52,-5.40) {${55}$};
\node  [main node] (j56) at (13.61,-5.40) {${56}$};
\node  [main node] (j57) at (20.13,-3.60) {${57}$};
\node  [main node] (j58) at (18.17,-7.20) {${58}$};
\node  [rigid node] (j59) at (22.74,-5.40) {${59}$};
\node  [main node] (j60) at (15.56,-7.20) {${60}$};
\node  [main node] (j61) at (3.17,-5.40) {${61}$};
\node  [main node] (j62) at (9.69,-3.60) {${62}$};
\node  [main node] (j63) at (11.00,-5.40) {${63}$};
\node  [main node] (j64) at (12.95,-7.20) {${64}$};
\node  [main node] (j65) at (8.39,-5.40) {${65}$};
\node  [main node] (j66) at (10.34,-7.20) {${66}$};
\node  [main node] (j67) at (9.04,-9.00) {${67}$};
\node  [main node] (j68) at (10.34,-9.00) {${68}$};
\node  [main node] (j69) at (10.34,-10.17) {${69}$};
\node  [main node] (j70) at (11.65,-10.17) {${70}$};
\node  [main node] (j71) at (12.30,-11.34) {${71}$};
\node  [main node] (j72) at (12.30,-13.14) {${72}$};
\node  [main node] (j73) at (12.30,-14.94) {$\mathbb{C}^4$};

  \path[every node/.style={font=\sffamily\small}]

(j01) edge   [lev1] node[above] {} (j07)
(j01) edge   [lev1] node[above] {} (j08)
(j01) edge   [lev1] node[above] {} (j23)
(j01) edge   [lev1] node[above] {} (j25)
(j01) edge   [lev1] node[above] {} (j53)

(j02) edge   [lev3] node[above] {} (j09)
(j02) edge   [lev3] node[above] {} (j49)

(j03) edge   [lev0] node[above] {} (j04)
(j03) edge   [lev0] node[above] {} (j05)

(j04) edge   [lev1] node[above] {} (j20)
(j04) edge   [lev1] node[above] {} (j22)
(j04) edge   [lev1] node[above] {} (j26)
(j04) edge   [bend right=6, lev1] node[above] {} (j27)

(j05) edge   [lev1] node[above] {} (j20)
(j05) edge   [lev1] node[above] {} (j26)

(j06) edge   [lev2] node[above] {} (j10)
(j06) edge   [lev2] node[above] {} (j14)
(j06) edge   [lev2] node[above] {} (j15)
(j06) edge   [bend left=6, lev2] node[above] {} (j19)
(j06) edge   [lev2] node[above] {} (j62)

(j07) edge   [lev2] node[above] {} (j10)
(j07) edge   [lev2] node[above] {} (j11)
(j07) edge   [lev2] node[above] {} (j17)
(j07) edge   [bend left=10, lev2] node[above] {} (j21)
(j07) edge   [lev2] node[above] {} (j62)

(j08) edge   [lev2] node[above] {} (j11)
(j08) edge   [lev2] node[above] {} (j45)
(j08) edge   [lev2] node[above] {} (j62)

(j09) edge   [bend right=6, lev4] node[above] {} (j18)
(j09) edge   [lev4] node[above] {} (j48)

(j10) edge   [lev3] node[above] {} (j29)
(j10) edge   [lev3] node[above] {} (j41)
(j10) edge   [lev3] node[above] {} (j46)
(j10) edge   [lev3] node[above] {} (j63)

(j11) edge   [bend left=6, lev3] node[above] {} (j37)
(j11) edge   [lev3] node[above] {} (j46)
(j11) edge   [lev3] node[above] {} (j65)

(j12) edge   [lev6] node[above] {} (j30)
(j12) edge   [lev6] node[above] {} (j32)
(j12) edge   [bend left=15, lev6] node[above] {} (j34)

(j13) edge   [bend left=10, lev4] node[above] {} (j28)
(j13) edge   [lev4] node[above] {} (j60)

(j14) edge   [lev3] node[above] {} (j29)
(j14) edge   [lev3] node[above] {} (j41)
(j14) edge   [lev3] node[above] {} (j56)
(j14) edge   [lev3] node[above] {} (j63)

(j15) edge   [bend left=6, lev3] node[above] {} (j35)
(j15) edge   [lev3] node[above] {} (j46)
(j15) edge   [lev3] node[above] {} (j56)
(j15) edge   [lev3] node[above] {} (j65)

(j16) edge   [bend right=6, lev4] node[above] {} (j28)
(j16) edge   [bend left=6, lev4] node[above] {} (j31)
(j16) edge   [lev4] node[above] {} (j50)

(j17) edge   [lev3] node[above] {} (j29)
(j17) edge   [lev3] node[above] {} (j43)
(j17) edge   [lev3] node[above] {} (j46)
(j17) edge   [lev3] node[above] {} (j63)

(j18) edge   [lev6] node[above] {} (j30)
(j18) edge   [bend left=6, lev6] node[above] {} (j36)

(j19) edge   [lev4] node[above] {} (j35)
(j19) edge   [lev4] node[above] {} (j40)

(j20) edge   [lev2] node[above] {} (j38)
(j20) edge   [lev2] node[above] {} (j47)
(j20) edge   [lev2] node[above] {} (j54)

(j21) edge   [lev4] node[above] {} (j37)
(j21) edge   [lev4] node[above] {} (j40)
(j21) edge   [lev4] node[above] {} (j42)

(j22) edge   [lev2] node[above] {} (j39)
(j22) edge   [lev2] node[above] {} (j47)

(j23) edge   [lev2] node[above] {} (j10)
(j23) edge   [lev2] node[above] {} (j45)
(j23) edge   [lev2] node[above] {} (j52)

(j24) edge   [lev2] node[above] {} (j14)
(j24) edge   [lev2] node[above] {} (j51)
(j24) edge   [lev2] node[above] {} (j57)

(j25) edge   [lev2] node[above] {} (j17)
(j25) edge   [lev2] node[above] {} (j45)
(j25) edge   [lev2] node[above] {} (j51)

(j26) edge   [lev2] node[above] {} (j19)
(j26) edge   [lev2] node[above] {} (j38)
(j26) edge   [lev2] node[above] {} (j47)

(j27) edge   [lev2] node[above] {} (j21)
(j27) edge   [lev2] node[above] {} (j38)
(j27) edge   [lev2] node[above] {} (j39)
(j27) edge   [lev2] node[above] {} (j54)

(j28) edge   [lev6] node[above] {} (j69)

(j29) edge   [lev4] node[above] {} (j64)

(j30) edge   [lev7] node[above] {} (j71)

(j31) edge   [lev6] node[above] {} (j69)

(j32) edge   [lev7] node[above] {} (j71)

(j33) edge   [lev12] node[above] {} (j73)

(j34) edge   [lev9] node[above] {} (j72)

(j35) edge   [lev5] node[above] {} (j67)

(j36) edge   [lev9] node[above] {} (j72)

(j37) edge   [lev5] node[above] {} (j67)

(j38) edge   [lev3] node[above] {} (j41)
(j38) edge   [lev3] node[above] {} (j61)

(j39) edge   [lev3] node[above] {} (j43)
(j39) edge   [lev3] node[above] {} (j61)

(j40) edge   [bend right=15, lev5] node[above] {} (j34)
(j40) edge   [lev5] node[above] {} (j67)
(j40) edge   [lev5] node[above] {} (j68)

(j41) edge   [lev4] node[above] {} (j40)
(j41) edge   [lev4] node[above] {} (j66)

(j42) edge   [bend right=6, lev5] node[above] {} (j36)
(j42) edge   [lev5] node[above] {} (j67)
(j42) edge   [lev5] node[above] {} (j68)

(j43) edge   [lev4] node[above] {} (j42)
(j43) edge   [lev4] node[above] {} (j66)

(j44) edge   [bend left=6, lev4] node[above] {} (j28)
(j44) edge   [bend right=6, lev4] node[above] {} (j68)

(j45) edge   [lev3] node[above] {} (j44)
(j45) edge   [lev3] node[above] {} (j46)
(j45) edge   [lev3] node[above] {} (j63)

(j46) edge   [bend left=6, lev4] node[above] {} (j28)
(j46) edge   [lev4] node[above] {} (j66)

(j47) edge   [bend right=6, lev3] node[above] {} (j35)
(j47) edge   [lev3] node[above] {} (j61)

(j48) edge   [bend left=6, lev5] node[above] {} (j30)
(j48) edge   [lev5] node[above] {} (j70)

(j49) edge   [lev4] node[above] {} (j48)

(j50) edge   [lev5] node[above] {} (j30)
(j50) edge   [bend left=6, lev5] node[above] {} (j69)

(j51) edge   [lev3] node[above] {} (j29)
(j51) edge   [lev3] node[above] {} (j63)

(j52) edge   [lev3] node[above] {} (j29)
(j52) edge   [lev3] node[above] {} (j63)

(j53) edge   [lev2] node[above] {} (j51)
(j53) edge   [lev2] node[above] {} (j52)
(j53) edge   [lev2] node[above] {} (j62)

(j54) edge   [bend right=6, lev3] node[above] {} (j37)
(j54) edge   [lev3] node[above] {} (j61)

(j55) edge   [bend right=6, lev4] node[above] {} (j31)
(j55) edge   [bend left=10, lev4] node[above] {} (j68)

(j56) edge   [bend left=6, lev4] node[above] {} (j31)
(j56) edge   [lev4] node[above] {} (j66)

(j57) edge   [lev3] node[above] {} (j55)
(j57) edge   [lev3] node[above] {} (j56)
(j57) edge   [lev3] node[above] {} (j63)

(j58) edge   [bend left=6, lev5] node[above] {} (j32)
(j58) edge   [bend right=10, lev5] node[above] {} (j70)

(j59) edge   [lev4] node[above] {} (j58)

(j60) edge   [lev5] node[above] {} (j32)
(j60) edge   [lev5] node[above] {} (j69)

(j61) edge   [lev4] node[above] {} (j66)

(j62) edge   [lev3] node[above] {} (j63)
(j62) edge   [lev3] node[above] {} (j65)

(j63) edge   [lev4] node[above] {} (j64)
(j63) edge   [lev4] node[above] {} (j66)

(j64) edge   [lev5] node[above] {} (j67)

(j65) edge   [lev4] node[above] {} (j66)

(j66) edge   [lev5] node[above] {} (j67)
(j66) edge   [lev5] node[above] {} (j68)
(j66) edge   [lev5] node[above] {} (j70)

(j67) edge   [lev6] node[above] {} (j69)

(j68) edge   [lev6] node[above] {} (j69)

(j69) edge   [lev7] node[above] {} (j71)

(j70) edge   [lev7] node[above] {} (j71)

(j71) edge   [lev8] node[above] {} (j72)

(j72) edge   [bend right=38, lev10] node[above] {} (j73);

\node [anchor=west, align=left, font=\sffamily\scriptsize] at (24.04,0.60) {arrows from\\ level};
\draw [red!85!black, line width=1.2pt] (24.04,0.00) -- (24.64,0.00) node [right, text=red!85!black] {0};
\draw [blue!85!black, line width=1.2pt] (24.04,-0.90) -- (24.64,-0.90) node [right, text=blue!85!black] {1};
\draw [green!55!black, line width=1.2pt] (24.04,-1.80) -- (24.64,-1.80) node [right, text=green!55!black] {2};
\draw [orange!95!black, line width=1.2pt] (24.04,-3.60) -- (24.64,-3.60) node [right, text=orange!95!black] {3};
\draw [violet, line width=1.2pt] (24.04,-5.40) -- (24.64,-5.40) node [right, text=violet] {4};
\draw [cyan!65!black, line width=1.2pt] (24.04,-7.20) -- (24.64,-7.20) node [right, text=cyan!65!black] {5};
\draw [brown, line width=1.2pt] (24.04,-9.00) -- (24.64,-9.00) node [right, text=brown] {6};
\draw [magenta!85!black, line width=1.2pt] (24.04,-10.17) -- (24.64,-10.17) node [right, text=magenta!85!black] {7};
\draw [olive!90!black, line width=1.2pt] (24.04,-11.34) -- (24.64,-11.34) node [right, text=olive!90!black] {8};
\draw [black, line width=1.2pt] (24.04,-12.24) -- (24.64,-12.24) node [right, text=black] {9};
\draw [gray!65!black, line width=1.2pt] (24.04,-13.14) -- (24.64,-13.14) node [right, text=gray!65!black] {10};
\draw [pink!85!black, line width=1.2pt] (24.04,-14.04) -- (24.64,-14.04) node [right, text=pink!85!black] {12};
\end{tikzpicture}
}

\medskip
\normalsize Graph of primary degenerations of $4$-dimensional Jordan algebras (the numbers on the left are the dimensions of the algebras of derivations; the rigid algebras are shown in dark boxes; the colour of an arrow is the colour of the level of its initial vertex).
\end{center}
}
\clearpage
\restoregeometry
\pdfpagewidth=210mm \pdfpageheight=297mm \setlength{\headwidth}{\textwidth}

\begin{corollaryA}\label{ir_LZ} The irreducible components  of
$\mathfrak{Jord}_4$ are
$$
\begin{aligned}
\mathcal{C}_{01}   &=\overline{ {\mathcal O}(\mathfrak{J}_{01})  }= 
\left\{\begin{array}{l}
\mathfrak{J}_{01}, \mathfrak{J}_{07}, \mathfrak{J}_{08}, \mathfrak{J}_{10}, \mathfrak{J}_{11}, \mathfrak{J}_{17}, \mathfrak{J}_{21}, \mathfrak{J}_{23}, \mathfrak{J}_{25}, \mathfrak{J}_{28}, \mathfrak{J}_{29}, \mathfrak{J}_{34}, \mathfrak{J}_{36}, \mathfrak{J}_{37}, \\ \mathfrak{J}_{40}, \mathfrak{J}_{41}, \mathfrak{J}_{42}, \mathfrak{J}_{43}, \mathfrak{J}_{44}, \mathfrak{J}_{45}, \mathfrak{J}_{46}, \mathfrak{J}_{51}, \mathfrak{J}_{52}, \mathfrak{J}_{53}, \mathfrak{J}_{62}, \mathfrak{J}_{63}, \mathfrak{J}_{64}, \mathfrak{J}_{65}, \\ \mathfrak{J}_{66}, \mathfrak{J}_{67}, \mathfrak{J}_{68}, \mathfrak{J}_{69}, \mathfrak{J}_{70}, \mathfrak{J}_{71}, \mathfrak{J}_{72}, \mathbb{C}^4
\end{array} \right\};\\
\mathcal{C}_{02}   &=\overline{ {\mathcal O}(\mathfrak{J}_{02})  }= 
\left\{\begin{array}{l}
\mathfrak{J}_{02}, \mathfrak{J}_{09}, \mathfrak{J}_{18}, \mathfrak{J}_{30}, \mathfrak{J}_{36}, \mathfrak{J}_{48}, \mathfrak{J}_{49}, \mathfrak{J}_{70}, \mathfrak{J}_{71}, \mathfrak{J}_{72}, \mathbb{C}^4
\end{array} \right\};\\
\mathcal{C}_{03}   &=\overline{ {\mathcal O}(\mathfrak{J}_{03})  }= 
\left\{\begin{array}{l}
\mathfrak{J}_{03}, \mathfrak{J}_{04}, \mathfrak{J}_{05}, \mathfrak{J}_{19}, \mathfrak{J}_{20}, \mathfrak{J}_{21}, \mathfrak{J}_{22}, \mathfrak{J}_{26}, \mathfrak{J}_{27}, \mathfrak{J}_{34}, \mathfrak{J}_{35}, \mathfrak{J}_{36}, \mathfrak{J}_{37}, \mathfrak{J}_{38}, \\ \mathfrak{J}_{39}, \mathfrak{J}_{40}, \mathfrak{J}_{41}, \mathfrak{J}_{42}, \mathfrak{J}_{43}, \mathfrak{J}_{47}, \mathfrak{J}_{54}, \mathfrak{J}_{61}, \mathfrak{J}_{66}, \mathfrak{J}_{67}, \mathfrak{J}_{68}, \mathfrak{J}_{69}, \mathfrak{J}_{70}, \mathfrak{J}_{71}, \\ \mathfrak{J}_{72}, \mathbb{C}^4
\end{array} \right\};\\
\mathcal{C}_{04}   &=\overline{ {\mathcal O}(\mathfrak{J}_{06})  }= 
\left\{\begin{array}{l}
\mathfrak{J}_{06}, \mathfrak{J}_{10}, \mathfrak{J}_{14}, \mathfrak{J}_{15}, \mathfrak{J}_{19}, \mathfrak{J}_{28}, \mathfrak{J}_{29}, \mathfrak{J}_{31}, \mathfrak{J}_{34}, \mathfrak{J}_{35}, \mathfrak{J}_{40}, \mathfrak{J}_{41}, \mathfrak{J}_{46}, \mathfrak{J}_{56}, \\ \mathfrak{J}_{62}, \mathfrak{J}_{63}, \mathfrak{J}_{64}, \mathfrak{J}_{65}, \mathfrak{J}_{66}, \mathfrak{J}_{67}, \mathfrak{J}_{68}, \mathfrak{J}_{69}, \mathfrak{J}_{70}, \mathfrak{J}_{71}, \mathfrak{J}_{72}, \mathbb{C}^4
\end{array} \right\};\\
\mathcal{C}_{05}   &=\overline{ {\mathcal O}(\mathfrak{J}_{12})  }= 
\left\{\begin{array}{l}
\mathfrak{J}_{12}, \mathfrak{J}_{30}, \mathfrak{J}_{32}, \mathfrak{J}_{34}, \mathfrak{J}_{71}, \mathfrak{J}_{72}, \mathbb{C}^4
\end{array} \right\};\\
\mathcal{C}_{06}   &=\overline{ {\mathcal O}(\mathfrak{J}_{13})  }= 
\left\{\begin{array}{l}
\mathfrak{J}_{13}, \mathfrak{J}_{28}, \mathfrak{J}_{32}, \mathfrak{J}_{60}, \mathfrak{J}_{69}, \mathfrak{J}_{71}, \mathfrak{J}_{72}, \mathbb{C}^4
\end{array} \right\};\\
\mathcal{C}_{07}   &=\overline{ {\mathcal O}(\mathfrak{J}_{16})  }= 
\left\{\begin{array}{l}
\mathfrak{J}_{16}, \mathfrak{J}_{28}, \mathfrak{J}_{30}, \mathfrak{J}_{31}, \mathfrak{J}_{50}, \mathfrak{J}_{69}, \mathfrak{J}_{71}, \mathfrak{J}_{72}, \mathbb{C}^4
\end{array} \right\};\\
\mathcal{C}_{08}   &=\overline{ {\mathcal O}(\mathfrak{J}_{24})  }= 
\left\{\begin{array}{l}
\mathfrak{J}_{14}, \mathfrak{J}_{24}, \mathfrak{J}_{29}, \mathfrak{J}_{31}, \mathfrak{J}_{34}, \mathfrak{J}_{40}, \mathfrak{J}_{41}, \mathfrak{J}_{51}, \mathfrak{J}_{55}, \mathfrak{J}_{56}, \mathfrak{J}_{57}, \mathfrak{J}_{63}, \mathfrak{J}_{64}, \mathfrak{J}_{66}, \\ \mathfrak{J}_{67}, \mathfrak{J}_{68}, \mathfrak{J}_{69}, \mathfrak{J}_{70}, \mathfrak{J}_{71}, \mathfrak{J}_{72}, \mathbb{C}^4
\end{array} \right\};\\
\mathcal{C}_{09}   &=\overline{ {\mathcal O}(\mathfrak{J}_{33})  }= 
\left\{\begin{array}{l}
\mathfrak{J}_{33}, \mathbb{C}^4
\end{array} \right\};\\
\mathcal{C}_{10}   &=\overline{ {\mathcal O}(\mathfrak{J}_{59})  }= 
\left\{\begin{array}{l}
\mathfrak{J}_{32}, \mathfrak{J}_{58}, \mathfrak{J}_{59}, \mathfrak{J}_{70}, \mathfrak{J}_{71}, \mathfrak{J}_{72}, \mathbb{C}^4
\end{array} \right\}.\\
\end{aligned}
$$ 
In particular, 
${\rm Rig}(\mathfrak{Jord}_4)= 
\big\{ \mathfrak{J}_{01}, \ \mathfrak{J}_{02}, \ \mathfrak{J}_{03}, \ \mathfrak{J}_{06}, \ \mathfrak{J}_{12}, \ \mathfrak{J}_{13}, \ \mathfrak{J}_{16}, \ \mathfrak{J}_{24}, \ \mathfrak{J}_{33}, \ \mathfrak{J}_{59} \big\}.$
\end{corollaryA}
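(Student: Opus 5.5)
Corollary~A is a purely combinatorial consequence of Theorem~A, which we take as given. By Lemma~\ref{lem_table}, $\mathfrak{Jord}_4$ is the union of the $73$ orbits of Table~1. Hence it is the union of the orbit closures $\overline{{\mathcal O}(\mathfrak{J}_{i})}$, and each of these is irreducible, being the closure of the image of the irreducible group ${\rm GL}(\mathbb{V})$. The irreducible components are therefore exactly the maximal orbit closures with respect to inclusion. By Theorem~A, $\overline{{\mathcal O}(\mathfrak{J}_{i})}\subseteq\overline{{\mathcal O}(\mathfrak{J}_{j})}$ holds if and only if there is a directed path from $\mathfrak{J}_j$ to $\mathfrak{J}_i$ in the graph of primary degenerations. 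So an algebra is rigid if and only if it is a source of the graph, i.e. no arrow ends at it. Moreover, the component it generates consists of the orbits of all vertices reachable from it.

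The first step is to read off the sources from the edge list of the figure. We would list every vertex that occurs as the head of some arrow and take the complement. This should give exactly $\mathfrak{J}_{01},\mathfrak{J}_{02},\mathfrak{J}_{03},\mathfrak{J}_{06},\mathfrak{J}_{12},\mathfrak{J}_{13},\mathfrak{J}_{16},\mathfrak{J}_{24},\mathfrak{J}_{33},\mathfrak{J}_{59}$.

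Some vertices are not heads of any arrow drawn from rows with smaller $\mathfrak{Der}$, for instance $\mathfrak{J}_{33}$ at level $12$. For these we need that no algebra degenerates to them. This is exactly what the graph encodes, since Theorem~A asserts that the path relation is the full degeneration order: all remaining pairs are non-degenerations, proved either by Table~3 with transitivity or by Lemma~\ref{lem_der}. So no further argument is needed beyond the statement of Theorem~A. Also, every non-source vertex has a source above it, because the graph is finite and acyclic ($\mathfrak{Der}$ strictly increases along arrows by Lemma~\ref{lem_der}). Hence the ten closures listed in Corollary~A cover $\mathfrak{Jord}_4$.

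The second step is, for each of the ten sources, to compute the set of descendants by a breadth-first search through the edge list. For $\mathfrak{J}_{33}$ this is immediate: it has the single edge to $\mathbb{C}^4$. For $\mathfrak{J}_{12}$ it gives $\mathfrak{J}_{30},\mathfrak{J}_{32},\mathfrak{J}_{34}$, and then $\mathfrak{J}_{71},\mathfrak{J}_{72},\mathbb{C}^4$. The same routine applies to the other sources, and we would compare each result with the listed set $\mathcal{C}_{0k}$.

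Since distinct sources are mutually non-comparable, no closure of a source contains another source. Thus none of the ten closures lies in another, all ten are components, and the decomposition is irredundant.

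The main obstacle is only bookkeeping: computing the transitive closures correctly for the large components $\mathcal{C}_{01}$, $\mathcal{C}_{03}$, $\mathcal{C}_{04}$ and $\mathcal{C}_{08}$. A single missed edge there would drop or add vertices. I would do this mechanically, ideally by reusing the Lean-checked graph, and cross-check the results against the Corollary's lists.
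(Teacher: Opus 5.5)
Your proposal is correct and follows essentially the paper's route: Corollary~A is read off from Theorem~A by taking the sources of the graph (the vertices with no incoming arrow) as the rigid algebras and their sets of descendants as the components, which is the same argument the paper writes out for Corollary~B. Running the breadth-first search on the $160$ edges does give exactly the ten sources $\mathfrak{J}_{01},\mathfrak{J}_{02},\mathfrak{J}_{03},\mathfrak{J}_{06},\mathfrak{J}_{12},\mathfrak{J}_{13},\mathfrak{J}_{16},\mathfrak{J}_{24},\mathfrak{J}_{33},\mathfrak{J}_{59}$ and the listed sets $\mathcal{C}_{01},\dots,\mathcal{C}_{10}$, so the bookkeeping step you flag goes through.
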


\section{Degenerations of $5$-dimensional nilpotent Jordan algebras}\label{sec5}

Let $\mathfrak{NJord}_5$ be the set of all nilpotent Jordan algebra structures on $\mathbb{V}=\mathbb{C}^5$; it is a Zariski-closed ${\rm GL}(\mathbb{V})$-stable subset of $\mathfrak{Jord}_5$, and ${\rm dim}\, {\mathcal O}(\mu)=25-{\rm dim}\,{\rm Der}(\mu)$.
In Table~4 we list all $5$-dimensional nilpotent Jordan algebras.

\begin{lemma}\label{lem_table5}
Every complex $5$-dimensional nilpotent Jordan algebra is isomorphic to one of the algebras of Table~4, where $\mathfrak{N}_{39}^{\alpha}$, $\mathfrak{N}_{47}^{\alpha}$, $\mathfrak{N}_{48}^{\alpha}$, $\mathfrak{N}_{50}^{\alpha}$ are one-parameter families and $\mathfrak{N}_{51}^{\alpha,\beta}$ is a two-parameter family (the algebraic classification {\rm \cite{M80,ha}}, see Remark~\ref{rem:ha}). Here the algebras $\mathfrak{N}_{k}$ with $k\le24$ are the commutative associative nilpotent algebras of Mazzola {\rm \cite{M80}}: $\mathfrak{N}_{k}=\varepsilon_k$ for $1\le k\le3$ and $\mathfrak{N}_{k-1}=\varepsilon_k$ for $5\le k\le25$; the remaining algebra $\varepsilon_4$ of Mazzola is the member $\mathfrak{N}_{48}^{1}$ of the family $\mathfrak{N}_{48}^{\alpha}$ (see Remark~\ref{rem:ha}). The algebras $\mathfrak{N}_{k}$ with $k\ge25$ are the non-associative algebras of Hegazi and Abdelwahab {\rm \cite{ha}}: $\mathfrak{N}_{k+24}=J_k$ for $1\le k\le28$, $\mathfrak{N}_{k+23}=J_k$ for $30\le k\le36$, $\mathfrak{N}_{k+22}=J_k$ for $38\le k\le40$, and $\mathfrak{N}_{63}=J_{41}^0$; $\mathfrak{N}_{24}=\mathbb{C}^5$ is the algebra with zero multiplication. As in Table~1, $e_1,\dots,e_5$ is a basis of the algebra, the products of basis elements which are not listed are zero, and the columns $\mathfrak{Der}$, ${\rm Ann}$, $\mathbb{A}^2$ give the dimensions of the algebra of derivations, of the annihilator and of the square (for a family: of a generic member).
\end{lemma}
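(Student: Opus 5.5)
The proof reduces Lemma~\ref{lem_table5} to the two published classifications, Mazzola \cite{M80} for the commutative associative case and Hegazi--Abdelwahab \cite{ha} for the non-associative case. What remains is to check that Table~4 is exactly their union, correctly relabelled, with no omissions and no repetitions. I would proceed in three steps.

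\textbf{Step 1: reduction to the two classifications.} Every $5$-dimensional nilpotent Jordan algebra is either associative or not. In the associative case it is a nilpotent commutative associative algebra, hence isomorphic to one of Mazzola's $\varepsilon_1,\dots,\varepsilon_{25}$. In the non-associative case it is isomorphic to one of the algebras $J_k$ of \cite{ha}, including the families $J_{41}^{\alpha}$ and the others. Both lists are taken as known results, as the Introduction states.

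\textbf{Step 2: matching the lists with Table~4.} For each stated correspondence I would check that the multiplication tables coincide, or differ by an explicit change of basis. The correspondences are $\mathfrak{N}_k=\varepsilon_k$ for $k\le3$, $\mathfrak{N}_{k-1}=\varepsilon_k$ for $5\le k\le25$, $\mathfrak{N}_{k+24}=J_k$, and so on.

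\textbf{Step 3: the exceptional indices.} These are the real content, and I expect them to be the main obstacle:
\begin{itemize}
\item $\varepsilon_4$, which should be shown to be $\mathfrak{N}_{48}^{1}$;
\item $J_{29}$ and $J_{37}$, which are skipped in the numbering;
\item $J_{41}^{\alpha}$, of which only $J_{41}^0$ appears as $\mathfrak{N}_{63}$.
\end{itemize}
For each of these, I would show that the algebra is isomorphic to a member of one of the families $\mathfrak{N}_{39}^{\alpha},\mathfrak{N}_{47}^{\alpha},\mathfrak{N}_{48}^{\alpha},\mathfrak{N}_{50}^{\alpha},\mathfrak{N}_{51}^{\alpha,\beta}$, or to another listed algebra, by exhibiting an explicit basis change. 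Presumably this is what Remark~\ref{rem:ha} records, possibly correcting redundancies or parametrization issues in \cite{ha}.

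Concretely, for $\varepsilon_4$ I would look for a basis of $\mathfrak{N}_{48}^{1}$ in which the product is associative and matches Mazzola's table. For the omitted $J_k$ and the members $J_{41}^{\alpha}$ with $\alpha\neq0$, I would compare invariants first:
\begin{itemize}
\item ${\rm dim}\,{\rm Ann}$, ${\rm dim}\,A^2$ and ${\rm dim}\,{\rm Der}$, as listed in Table~4;
\item the nilpotency index;
\item whether the algebra is associative.
\end{itemize}
This identifies the candidate target, and I would then solve for the isomorphism.

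Finally, to make Table~4 a list of normal forms, I would check that its entries are pairwise non-isomorphic, apart from the identifications within the families. The invariants in the Table~4 columns separate most pairs. For the remaining ones I would use finer invariants, such as the dimensions of $A^3$ or of ${\rm Ann}(A)\cap A^2$, or whether a polynomial identity holds. The verification that every table defines a Jordan algebra is a direct computation, already formalized in Lean.
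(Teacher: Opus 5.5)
Your plan is essentially the paper's own argument. Completeness is taken from Mazzola \cite{M80} and Hegazi--Abdelwahab \cite{ha}, whose completeness the paper likewise does not re-prove. The relabelling is then reconciled by the explicit isomorphisms recorded in Remark~\ref{rem:ha}: $\varepsilon_4=J_{24}^{1}=\mathfrak{N}_{48}^{1}$, $J_{29}\cong\mathfrak{N}_{48}^{0}$, $J_{37}\cong\mathfrak{N}_{51}^{-1,-1}$, $J_{41}^{\lambda}\cong\mathfrak{N}_{51}^{\varepsilon,\varepsilon^{-1}}$ for $\lambda\neq0,1$, and $J_{41}^{1}\cong\mathfrak{N}_{50}^{0}$. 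The only difference is in the pairwise non-isomorphism check, which the lemma does not actually require: for pairs not separated by invariants, the paper uses the non-degenerations of Theorem~B, whereas you propose finer invariants.
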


The special member $\mathfrak{N}_{48}^{-\frac12}$ of the family $\mathfrak{N}_{48}^{\alpha}$ has an $8$-dimensional algebra of derivations (the generic member has $7$), so it is treated as a separate object. The member $\mathfrak{N}_{48}^{1}$ is the only associative member of the family (it is the algebra $\varepsilon_4$ of Mazzola); it is also treated as a separate object. The families $\mathfrak{N}_{39}^{\alpha}$, $\mathfrak{N}_{47}^{\alpha}$, $\mathfrak{N}_{50}^{\alpha}$, $\mathfrak{N}_{51}^{\alpha,\beta}$ have generic orbits of dimension $19$, and $\mathfrak{N}_{48}^{\alpha}$ has generic orbits of dimension $18$; the closures of the families have dimensions $20$, $20$, $20$, $21$ and $19$, respectively. Altogether we have $65$ objects: the $63$ normal forms of Table~4 and the members $\mathfrak{N}_{48}^{1}$ and $\mathfrak{N}_{48}^{-\frac12}$.

\begin{remark}[on the list of Hegazi and Abdelwahab]\label{rem:ha}
The list of {\rm \cite{ha}} contains some algebras which are isomorphic to other algebras of the list
(this was observed by Kashuba and Martin {\rm \cite[Theorem~4]{km18}}; we have rechecked all these
isomorphisms by explicit changes of basis):
\begin{enumerate}
\item $J_{24}^{1}\cong\varepsilon_4$ (the two tables coincide), i.e. the associative algebra $\varepsilon_4$ is the member $\mathfrak{N}_{48}^{1}$ of the non-associative family
$\mathfrak{N}_{48}^{\alpha};$

\item $J_{29}\cong J_{24}^0=\mathfrak{N}_{48}^{0}$: $J_{29}$ ($e_1e_1=e_2$, $e_2e_3=e_4$, $e_1e_3=e_5$) becomes
$\mathfrak{N}_{48}^{0}$ in the basis $e_1,e_3,e_2,e_5,e_4;$

\item $J_{37}\cong J_{27}^{-1,-1}=\mathfrak{N}_{51}^{-1,-1}$: $J_{37}$ ($e_1e_1=e_2e_2=e_5$, $e_1e_2=e_3$,
$e_1e_3=e_4$) becomes $\mathfrak{N}_{51}^{-1,-1}$ in the basis $-e_1+e_2,\ e_1+e_2,\ -2e_3+2e_5,\ 2e_3+2e_5,\
-2e_4;$

\item $J_{41}^{\lambda}\cong J_{27}^{\varepsilon,\varepsilon^{-1}}=\mathfrak{N}_{51}^{\varepsilon,\varepsilon^{-1}}$
for $\lambda=s^2\neq0,1$, where $\varepsilon=-\frac{(s-1)^2}{(s+1)^2}$: $J_{41}^\lambda$ ($e_1e_1=e_5$,
$e_1e_2=e_3$, $e_2e_2=\lambda e_5$, $e_1e_3=e_2e_3=e_4$) becomes
$\mathfrak{N}_{51}^{\varepsilon,\varepsilon^{-1}}$ in the basis
\begin{center}$se_1+e_2,\ \frac{s-1}{s+1}(-se_1+e_2),\ 2s(e_3+se_5),\ -\frac{2s(s-1)^2}{(s+1)^2}(e_3-se_5),\
-\frac{2s(s-1)^2}{s+1}e_4;$\end{center}

\item $J_{41}^1\cong J_{26}^0=\mathfrak{N}_{50}^{0}$: $J_{41}^1$ becomes $\mathfrak{N}_{50}^{0}$ in the basis
\begin{center}$e_1+e_2,\ {\bf i}(-e_1+e_2),\ 2e_3+2e_5,\ 2e_3-2e_5,\ 4e_4$.\end{center}
\end{enumerate}
Therefore $J_{29}$, $J_{37}$ and the members $J_{41}^\lambda$, $\lambda\neq0$, are omitted from our
list, the family $J_{41}$ is represented only by $\mathfrak{N}_{63}=J_{41}^0$, and in the family
$\mathfrak{N}_{48}^{\alpha}=J_{24}^\alpha$ the value $\alpha=1$ gives the associative algebra $\varepsilon_4$, which is therefore not listed separately. The remaining $63$
normal forms are pairwise non-isomorphic: they are separated by the invariants of
Table~4 and of {\rm \cite[Tables~2--3]{km18}} or, for the remaining pairs, by the
non-degenerations of Theorem~B. We did not find any further coincidences, and we
found no indication of algebras missing in {\rm \cite{ha}}; note, however, that the completeness of the
classification of {\rm \cite{ha}} is not re-proved here. Finally, in the multiplication table of
$\varepsilon_2$ in {\rm \cite[Table~2]{km18}} the product $n_5^2=n_4$ is missing (without it the
annihilator would be $2$-dimensional, while ${\rm dim}\,{\rm Ann}(\varepsilon_2)=1$ in
{\rm \cite[Table~2]{km18}}); our $\mathfrak{N}_{02}$ contains the product $e_5e_5=e_4$.
\end{remark}

\begingroup
\small
\setlength{\tabcolsep}{4pt}
\renewcommand{\arraystretch}{1.2}
\begin{longtable}{|l|>{\raggedright\arraybackslash}p{2.2cm}|>{\raggedright\arraybackslash}p{8.3cm}|c|c|c|}
\multicolumn{6}{c}{Table 4. {\it $5$-dimensional nilpotent Jordan algebras.}}\\
\hline
$\mathbb{A}$ & \mbox{source} & \mbox{multiplication table} & $\mathfrak{Der}$ & ${\rm Ann}$ & $\mathbb{A}^2$\\
\hline
\hline
\endhead
$\mathfrak{N}_{01}$ & \cite{M80}: $\varepsilon_{1}$ & $e_1e_1 = e_2$, $e_2e_2 = e_4$, $e_1e_3 = e_4$,\newline $e_1e_2 = e_3$, $e_1e_4 = e_5$, $e_2e_3 = e_5$ & 5 & 1 & 4\\
\hline
$\mathfrak{N}_{02}$ & \cite{M80}: $\varepsilon_{2}$ & $e_1e_1 = e_2$, $e_1e_2 = e_3$, $e_2e_2 = e_4$, $e_1e_3 = e_4$, $e_5e_5 = e_4$ & 6 & 1 & 3\\
\hline
$\mathfrak{N}_{03}$ & \cite{M80}: $\varepsilon_{3}$ & $e_1e_1 = e_2$, $e_4e_4 = e_5$, $e_1e_2 = e_3$, $e_4e_5 = e_3$ & 6 & 1 & 3\\
\hline
$\mathfrak{N}_{04}$ & \cite{M80}: $\varepsilon_{5}$ & $e_1e_1 = e_2$, $e_2e_2 = e_4$, $e_1e_2 = e_3$, $e_1e_3 = e_4$ & 7 & 2 & 3\\
\hline
$\mathfrak{N}_{05}$ & \cite{M80}: $\varepsilon_{6}$ & $e_1e_1 = e_2$, $e_1e_2 = e_3$, $e_4e_4 = e_5$ & 7 & 2 & 3\\
\hline
$\mathfrak{N}_{06}$ & \cite{M80}: $\varepsilon_{7}$ & $e_1e_3 = e_4$, $e_2e_3 = e_5$, $e_1e_2 = e_4 - e_5$ & 7 & 2 & 2\\
\hline
$\mathfrak{N}_{07}$ & \cite{M80}: $\varepsilon_{8}$ & $e_1e_1 = e_3$, $e_2e_2 = e_4$, $e_1e_3 = e_4$, $e_1e_2 = e_5$ & 8 & 2 & 3\\
\hline
$\mathfrak{N}_{08}$ & \cite{M80}: $\varepsilon_{9}$ & $e_1e_3 = e_5$, $e_1e_2 = e_4$, $e_2e_3 = -e_5$ & 8 & 2 & 2\\
\hline
$\mathfrak{N}_{09}$ & \cite{M80}: $\varepsilon_{10}$ & $e_1e_1 = e_3$, $e_1e_3 = e_4$, $e_1e_2 = e_5$ & 9 & 2 & 3\\
\hline
$\mathfrak{N}_{10}$ & \cite{M80}: $\varepsilon_{11}$ & $e_1e_1 = e_4$, $e_2e_3 = e_4$, $e_1e_3 = e_5$ & 9 & 2 & 2\\
\hline
$\mathfrak{N}_{11}$ & \cite{M80}: $\varepsilon_{12}$ & $e_1e_2 = e_4$, $e_1e_3 = e_5$ & 11 & 2 & 2\\
\hline
$\mathfrak{N}_{12}$ & \cite{M80}: $\varepsilon_{13}$ & $e_3e_3 = e_4$, $e_1e_2 = e_5$, $e_3e_4 = e_5$ & 8 & 1 & 2\\
\hline
$\mathfrak{N}_{13}$ & \cite{M80}: $\varepsilon_{14}$ & $e_1e_1 = e_2$, $e_4e_4 = e_3$, $e_1e_2 = e_3$ & 9 & 2 & 2\\
\hline
$\mathfrak{N}_{14}$ & \cite{M80}: $\varepsilon_{15}$ & $e_1e_1 = e_3$, $e_2e_2 = e_3$, $e_4e_4 = e_5$ & 9 & 2 & 2\\
\hline
$\mathfrak{N}_{15}$ & \cite{M80}: $\varepsilon_{16}$ & $e_1e_1 = e_3$, $e_2e_2 = e_5$, $e_1e_2 = e_4$ & 10 & 3 & 3\\
\hline
$\mathfrak{N}_{16}$ & \cite{M80}: $\varepsilon_{17}$ & $e_1e_1 = e_4$, $e_3e_3 = e_5$, $e_1e_2 = e_5$ & 10 & 2 & 2\\
\hline
$\mathfrak{N}_{17}$ & \cite{M80}: $\varepsilon_{18}$ & $e_1e_1 = e_2$, $e_1e_2 = e_3$ & 11 & 3 & 2\\
\hline
$\mathfrak{N}_{18}$ & \cite{M80}: $\varepsilon_{19}$ & $e_1e_1 = e_2$, $e_3e_3 = e_4$ & 11 & 3 & 2\\
\hline
$\mathfrak{N}_{19}$ & \cite{M80}: $\varepsilon_{20}$ & $e_1e_1 = e_2$, $e_1e_3 = e_4$ & 12 & 3 & 2\\
\hline
$\mathfrak{N}_{20}$ & \cite{M80}: $\varepsilon_{21}$ & $e_2e_3 = e_5$, $e_1e_4 = e_5$ & 11 & 1 & 1\\
\hline
$\mathfrak{N}_{21}$ & \cite{M80}: $\varepsilon_{22}$ & $e_1e_1 = e_2$, $e_3e_4 = e_2$ & 12 & 2 & 1\\
\hline
$\mathfrak{N}_{22}$ & \cite{M80}: $\varepsilon_{23}$ & $e_1e_1 = e_3$, $e_2e_2 = e_3$ & 14 & 3 & 1\\
\hline
$\mathfrak{N}_{23}$ & \cite{M80}: $\varepsilon_{24}$ & $e_1e_1 = e_2$ & 17 & 4 & 1\\
\hline
$\mathfrak{N}_{24}$ & \cite{M80}: $\varepsilon_{25}$ & - & 25 & 5 & 0\\
\hline
$\mathfrak{N}_{25}$ & \cite{ha}: $J_{1}$ & $e_1e_1 = e_2$, $e_1e_2 = e_3$, $e_2e_4 = e_3$ & 8 & 2 & 2\\
\hline
$\mathfrak{N}_{26}$ & \cite{ha}: $J_{2}$ & $e_1e_1 = e_2$, $e_4e_4 = -e_2$, $e_1e_2 = e_3$, $e_2e_4 = e_3$ & 9 & 2 & 2\\
\hline
$\mathfrak{N}_{27}$ & \cite{ha}: $J_{3}$ & $e_1e_1 = e_2$, $e_4e_4 = -e_2 - e_3$, $e_1e_2 = e_3$, $e_2e_4 = e_3$ & 8 & 2 & 2\\
\hline
$\mathfrak{N}_{28}$ & \cite{ha}: $J_{4}$ & $e_1e_1 = e_2$, $e_4e_4 = e_2$, $e_1e_2 = e_3$ & 7 & 2 & 2\\
\hline
$\mathfrak{N}_{29}$ & \cite{ha}: $J_{5}$ & $e_1e_1 = e_2$, $e_4e_4 = e_5$, $e_2e_3 = e_5$ & 6 & 1 & 2\\
\hline
$\mathfrak{N}_{30}$ & \cite{ha}: $J_{6}$ & $e_1e_1 = e_2$, $e_1e_4 = e_5$, $e_2e_3 = e_5$ & 7 & 1 & 2\\
\hline
$\mathfrak{N}_{31}$ & \cite{ha}: $J_{7}$ & $e_1e_2 = e_3$, $e_3e_4 = e_5$ & 6 & 1 & 2\\
\hline
$\mathfrak{N}_{32}$ & \cite{ha}: $J_{8}$ & $e_1e_2 = e_3$, $e_3e_4 = e_5$, $e_1e_1 = e_5$ & 5 & 1 & 2\\
\hline
$\mathfrak{N}_{33}$ & \cite{ha}: $J_{9}$ & $e_1e_2 = e_3$, $e_3e_4 = e_5$, $e_1e_1 = e_5$, $e_2e_2 = e_5$ & 4 & 1 & 2\\
\hline
$\mathfrak{N}_{34}$ & \cite{ha}: $J_{10}$ & $e_1e_2 = e_3$, $e_1e_3 = e_5$, $e_4e_4 = e_5$, $e_2e_3 = e_5$ & 5 & 1 & 2\\
\hline
$\mathfrak{N}_{35}$ & \cite{ha}: $J_{11}$ & $e_1e_2 = e_3$, $e_1e_3 = e_5$, $e_4e_4 = e_5$ & 7 & 1 & 2\\
\hline
$\mathfrak{N}_{36}$ & \cite{ha}: $J_{12}$ & $e_1e_2 = e_3$, $e_1e_3 = e_5$, $e_4e_4 = e_5$, $e_2e_2 = e_5$ & 6 & 1 & 2\\
\hline
$\mathfrak{N}_{37}$ & \cite{ha}: $J_{13}$ & $e_1e_2 = e_3$, $e_1e_4 = e_5$, $e_1e_3 = e_5$, $e_2e_3 = e_5$ & 6 & 1 & 2\\
\hline
$\mathfrak{N}_{38}$ & \cite{ha}: $J_{14}$ & $e_1e_2 = e_3$, $e_1e_3 = e_5$, $e_2e_4 = e_5$ & 7 & 1 & 2\\
\hline
$\mathfrak{N}_{39}^{\alpha}$ & \cite{ha}: $J_{15}$ & $e_1e_1 = e_2$, $e_1e_2 = e_3$, $e_4e_4 = \alpha e_5$,\newline $e_2e_2 = e_5$, $e_1e_3 = e_5$, $e_2e_4 = e_5$ & 6 & 1 & 3\\
\hline
$\mathfrak{N}_{40}$ & \cite{ha}: $J_{16}$ & $e_1e_2 = e_4$, $e_3e_3 = e_4$, $e_1e_4 = e_5$ & 6 & 1 & 2\\
\hline
$\mathfrak{N}_{41}$ & \cite{ha}: $J_{17}$ & $e_1e_2 = e_4$, $e_3e_3 = e_4$, $e_1e_4 = e_5$, $e_2e_3 = e_5$ & 5 & 1 & 2\\
\hline
$\mathfrak{N}_{42}$ & \cite{ha}: $J_{18}$ & $e_1e_2 = e_4$, $e_3e_3 = e_4$, $e_1e_4 = e_5$, $e_2e_2 = e_5$ & 4 & 1 & 2\\
\hline
$\mathfrak{N}_{43}$ & \cite{ha}: $J_{19}$ & $e_1e_2 = e_4$, $e_3e_3 = e_4$, $e_3e_4 = e_5$ & 5 & 1 & 2\\
\hline
$\mathfrak{N}_{44}$ & \cite{ha}: $J_{20}$ & $e_1e_2 = e_4$, $e_3e_3 = e_4$, $e_3e_4 = e_5$, $e_1e_1 = e_5$ & 4 & 1 & 2\\
\hline
$\mathfrak{N}_{45}$ & \cite{ha}: $J_{21}$ & $e_1e_2 = e_4$, $e_3e_3 = e_4$, $e_3e_4 = e_5$, $e_1e_1 = e_5$, $e_2e_2 = e_5$ & 3 & 1 & 2\\
\hline
$\mathfrak{N}_{46}$ & \cite{ha}: $J_{22}$ & $e_1e_1 = e_2$, $e_1e_2 = e_4$, $e_3e_3 = e_4$, $e_2e_2 = e_5$, $e_1e_4 = e_5$ & 4 & 1 & 3\\
\hline
$\mathfrak{N}_{47}^{\alpha}$ & \cite{ha}: $J_{23}$ & $e_1e_1 = e_3$, $e_1e_3 = e_5$, $e_2e_4 = e_5$,\newline $e_1e_2 = e_4$, $e_2e_3 = \alpha e_5$ & 6 & 1 & 3\\
\hline
$\mathfrak{N}_{48}^{\alpha}$ & \cite{ha}: $J_{24}$, \cite{M80}: $\varepsilon_4=J_{24}^1$ & $e_1e_1 = e_3$, $e_1e_2 = e_4$, $e_2e_3 = e_5$, $e_1e_4 = \alpha e_5$ & 7 & 1 & 3\\
\hline
$\mathfrak{N}_{49}$ & \cite{ha}: $J_{25}$ & $e_1e_1 = e_3$, $e_1e_2 = e_4$, $e_1e_3 = e_5$,\newline $e_1e_4 = e_5$, $e_2e_3 = -2 e_5$ & 7 & 1 & 3\\
\hline
$\mathfrak{N}_{50}^{\alpha}$ & \cite{ha}: $J_{26}$ & $e_1e_1 = e_3$, $e_2e_2 = e_4$, $e_2e_4 = \alpha e_5$,\newline $e_1e_3 = e_5$, $e_1e_4 = e_5$ & 6 & 1 & 3\\
\hline
$\mathfrak{N}_{51}^{\alpha,\beta}$ & \cite{ha}: $J_{27}$ & $e_1e_1 = e_3$, $e_2e_3 = e_5$, $e_1e_4 = e_5$,\newline $e_2e_2 = e_4$, $e_2e_4 = \alpha e_5$, $e_1e_3 = \beta e_5$ & 6 & 1 & 3\\
\hline
$\mathfrak{N}_{52}$ & \cite{ha}: $J_{28}$ & $e_1e_1 = e_2$, $e_2e_3 = e_4$, $e_3e_3 = e_5$ & 7 & 2 & 3\\
\hline
$\mathfrak{N}_{53}$ & \cite{ha}: $J_{30}$ & $e_1e_1 = e_2$, $e_2e_3 = e_4$, $e_1e_3 = e_5$, $e_3e_3 = e_5$ & 6 & 2 & 3\\
\hline
$\mathfrak{N}_{54}$ & \cite{ha}: $J_{31}$ & $e_1e_1 = e_2$, $e_2e_3 = e_4$, $e_1e_2 = e_5$ & 7 & 2 & 3\\
\hline
$\mathfrak{N}_{55}$ & \cite{ha}: $J_{32}$ & $e_1e_1 = e_2$, $e_2e_3 = e_4$, $e_1e_2 = e_5$, $e_1e_3 = e_5$ & 6 & 2 & 3\\
\hline
$\mathfrak{N}_{56}$ & \cite{ha}: $J_{33}$ & $e_1e_1 = e_2$, $e_2e_3 = e_4$, $e_1e_2 = e_5$, $e_3e_3 = e_5$ & 5 & 2 & 3\\
\hline
$\mathfrak{N}_{57}$ & \cite{ha}: $J_{34}$ & $e_1e_2 = e_3$, $e_1e_3 = e_4$, $e_1e_1 = e_5$ & 8 & 2 & 3\\
\hline
$\mathfrak{N}_{58}$ & \cite{ha}: $J_{35}$ & $e_1e_2 = e_3$, $e_1e_3 = e_4$, $e_2e_2 = e_5$ & 7 & 2 & 3\\
\hline
$\mathfrak{N}_{59}$ & \cite{ha}: $J_{36}$ & $e_1e_2 = e_3$, $e_1e_3 = e_4$, $e_2e_3 = e_5$ & 6 & 2 & 3\\
\hline
$\mathfrak{N}_{60}$ & \cite{ha}: $J_{38}$ & $e_1e_2 = e_3$, $e_1e_3 = e_4$, $e_1e_1 = e_5$, $e_2e_3 = e_5$ & 5 & 2 & 3\\
\hline
$\mathfrak{N}_{61}$ & \cite{ha}: $J_{39}$ & $e_1e_2 = e_3$, $e_1e_1 = e_5$, $e_2e_2 = e_4$, $e_1e_3 = e_4$ & 7 & 2 & 3\\
\hline
$\mathfrak{N}_{62}$ & \cite{ha}: $J_{40}$ & $e_1e_2 = e_3$, $e_2e_2 = e_4$, $e_1e_3 = e_4$, $e_1e_1 = e_5$, $e_2e_3 = e_5$ & 4 & 2 & 3\\
\hline
$\mathfrak{N}_{63}$ & \cite{ha}: $J_{41}$ & $e_1e_1 = e_5$, $e_1e_2 = e_3$, $e_1e_3 = e_4$, $e_2e_3 = e_4$ & 6 & 2 & 3\\
\hline
\end{longtable}
\endgroup

\begin{theoremB}\label{theorem5}
The graph of primary degenerations for $5$-dimensional nilpotent Jordan algebras has the form given in the figure below (the rows of the graph correspond to the dimension of the orbit closure): an algebra or a family $\mathfrak{A}$ degenerates to an algebra or a family $\mathfrak{B}$ if and only if there is a path from $\mathfrak{A}$ to $\mathfrak{B}$ in the graph. The order consists of $1127$ degenerations and $3033$ non-degenerations between the $65$ objects: the $154$ primary degenerations (Table~5) give the other $973$ degenerations by transitivity; the $122$ primary non-degenerations (Table~6: $47$ pairs listed in the rows and $75$ pairs given by the first row) give $588$ further non-degenerations, and the remaining $2323$ non-degenerations follow from Lemma~\ref{lem_der}.
\end{theoremB}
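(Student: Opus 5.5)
The plan follows the proof of Theorem~A, adapted to the presence of families. By Lemma~\ref{lem_der} and transitivity, it suffices to do three things: exhibit the $154$ primary degenerations, prove the $122$ primary non-degenerations, and check that together with the orbit-dimension obstruction they decide every ordered pair among the $65$ objects. For families, Lemma~\ref{lem_der} is applied in the refined form stated in Section~\ref{prelim}. We compare $\dim\overline{\{\mathcal O(\mu^\alpha)\}}$ (namely $20,20,20,21,19$ for $\mathfrak N_{39}^\alpha,\mathfrak N_{47}^\alpha,\mathfrak N_{50}^\alpha,\mathfrak N_{51}^{\alpha,\beta},\mathfrak N_{48}^\alpha$) with $25-{\rm Der}(\lambda)$, and we compare generic orbit dimensions when deciding whether single members degenerate. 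The special members $\mathfrak N_{48}^{1}$ and $\mathfrak N_{48}^{-\frac12}$ are handled as separate vertices, with ${\rm Der}$ equal to $7$ and $8$ respectively.

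\textbf{Degenerations.} For each arrow of the graph I would give a parametrized basis ${\rm E}^t_i$ in the basis $e_1,\dots,e_5$ of Table~4. I would look for it as diagonal scalings $t^{k_i}e_i$ combined with a few off-diagonal corrections, and then verify that all structure constants are polynomial in $t$ with the prescribed limit. For a family source $\mu^{\alpha(t)}\to\lambda$, the parameter is also allowed to depend on $t$, typically $\alpha(t)=t^{-m}c$ compensated by the scaling. For a family target $\mu\to\lambda^\alpha$, the basis is uniform in $\alpha$, so the degeneration holds for every $\alpha$. Degenerations between families, such as $\mathfrak N_{51}^{\alpha,\beta}\to\mathfrak N_{50}^{\alpha}$, require choosing $\alpha(t),\beta(t)$ so that the target parameter is arbitrary. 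The degeneration to $\mathfrak N_{24}$ is given by $te_i$.

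\textbf{Non-degenerations.} For each primary non-degeneration I would produce a closed set $\mathcal R$ and apply Lemma~\ref{main}. The cheap cases use the ${\rm GL}(\mathbb V)$-stable sets of Lemma~\ref{lem_inv}: ${\rm Ann}(A)>{\rm Ann}(B)$, $A^2<B^2$, and identities such as associativity (which separates the Mazzola algebras $\mathfrak N_{k}$, $k\le 24$, and $\mathfrak N_{48}^1$ from the non-associative ones), $A^3A=0$, or $(A^2)^2=0$. The remaining cases use Borel-stable sets defined by a flag $A_i=\langle f_i,\dots,f_5\rangle$, conditions $A_iA_j\subseteq A_k$, and linear or quadratic relations among the $c_{ij}^k$. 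For a family source, a single $\mathcal R$ must contain a structure of every member, so its defining relations must hold identically in $\alpha$ (and $\beta$). The "first row" of Table~6, covering $75$ pairs, would be one such uniform invariant. To show that no structure of $B$ lies in $\mathcal R$, I would run over the $120$ Bruhat cells $P_wU$ and certify emptiness by Gröbner bases or Nullstellensatz certificates.

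\textbf{Completeness.} Finally, I would compute the transitive closure of the $154$ arrows and check that it gives $1127$ degenerations. I would then close the $122$ non-degenerations under "$A\not\to B$, $A\to C$ implies $C\not\to B$" and "$D\to B$ implies $A\not\to D$", and check that this yields $588$ more. The remaining $2323$ pairs should be exactly those excluded by orbit dimension. The totals $1127+3033$ must account for all ordered pairs of distinct objects, which is a finite mechanical check.

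\textbf{Main obstacle.} I expect the hardest part to be the non-degenerations out of, and into, the families, above all $\mathfrak N_{51}^{\alpha,\beta}$. There the invariant set must be uniform in two parameters, and the membership of the family closure is not controlled by Lemma~\ref{lem_der} member by member. For these I would first try flag conditions adapted to the filtration $A\supset A^2\supset A^3$, together with a quadratic relation expressing that some product lies in a line, and adjust the relation until it holds for all $(\alpha,\beta)$.
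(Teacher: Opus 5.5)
Your plan is essentially the paper's proof. It uses explicit parametrized bases (with $t$-dependent parameters for family sources and bases uniform in $\alpha$ for family targets) and closed ${\rm GL}(\mathbb{V})$- or Borel-stable sets with Lemma~\ref{main}, certified on the Bruhat cells, where the first row of Table~6 is the associativity identity; it adds the refined orbit-dimension comparison for families and the mechanical transitive-closure count $1127+3033=65\cdot 64$. The one ingredient you do not mention concerns family-to-family degenerations such as $\mathfrak{N}_{39}^{1-p^{-2}}\to\mathfrak{N}_{48}^{1+p}$, which reach only all but finitely many target members $\mathfrak{N}_{48}^{\alpha}$; the paper recovers the missing members because the set of attained $\alpha$ is closed and infinite, hence all of $\mathbb{C}$ (Proposition~\ref{prop:fam}(2)).
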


\begin{proof}
By Lemma~\ref{lem_der} and transitivity, it is enough to prove that the arrows of the graph are degenerations (the primary degenerations) and that $\mathfrak{A}\not\to\mathfrak{B}$ for all primary non-degenerations $\mathfrak{A}\not\to\mathfrak{B}$.

\medskip

\noindent{\bf Degenerations.} The list of primary degenerations has $154$ elements. For each of them a parametrized basis is given in Table~5; for a family source, the member is indicated by its parameter in the superscript (it may depend on $t$ or on an auxiliary parameter $p$); for a family target, the member is indicated by its parameter $\alpha$ in the superscript, and the parametrized basis is given for an arbitrary $\alpha\in\mathbb{C}$. The degeneration $\mathfrak{N}_{48}^{1+t}\to\mathfrak{N}_{48}^{1}$ holds since $\mathfrak{N}_{48}^{1}$ is a member of the family and the multiplication table of $\mathfrak{N}_{48}^{\alpha}$ depends polynomially on $\alpha$ (take ${\rm E}^t_i=e_i$), and $\mathfrak{N}_{48}^{t-\frac12}\to\mathfrak{N}_{48}^{-\frac12}$ for the same reason.

\begingroup
\small
\begin{longtable}{|c|>{\raggedright\arraybackslash}p{11.6cm}|}
\caption*{Table 5. {\it Primary degenerations of $5$-dimensional nilpotent Jordan algebras.}}\\
\hline
\mbox{Degeneration} & \mbox{Parametrized basis} \\
\hline
\endfirsthead
\hline
\mbox{Degeneration} & \mbox{Parametrized basis} \\
\hline
\endhead
$\mathfrak{N}_{01} \to \mathfrak{N}_{02}$ & ${\rm E}^t_1=e_1$, \ ${\rm E}^t_2=e_2$, \ ${\rm E}^t_3=e_3$,\newline ${\rm E}^t_4=e_4$, \ ${\rm E}^t_5={\bf i}e_2 - {\bf i}t^{-1}e_3 + {\bf i}t^{-2}e_4 - {\bf i}t^{-3}e_5$ \\
\hline
$\mathfrak{N}_{01} \to \mathfrak{N}_{03}$ & ${\rm E}^t_1=e_1$, \ ${\rm E}^t_2=e_2$, \ ${\rm E}^t_3=e_3$, \ ${\rm E}^t_4=e_1 + t^{-1}e_2 + t^{-2}e_3 + t^{-3}e_4 + t^{-4}e_5$,\newline ${\rm E}^t_5=e_2 + 2t^{-1}e_3 + 3t^{-2}e_4 + 4t^{-3}e_5$ \\
\hline
$\mathfrak{N}_{02} \to \mathfrak{N}_{48}^{1}$ & ${\rm E}^t_1=-{\bf i}t^{2}e_1$, \ ${\rm E}^t_2=-t^{3}e_2 - {\bf i}t^{3}e_5$, \ ${\rm E}^t_3={\bf i}t^{4}e_5$,\newline ${\rm E}^t_4={\bf i}t^{5}e_3$, \ ${\rm E}^t_5=t^{7}e_4$ \\
\hline
$\mathfrak{N}_{02} \to \mathfrak{N}_{04}$ & ${\rm E}^t_1=t^{2}e_1$, \ ${\rm E}^t_2=t^{4}e_2$, \ ${\rm E}^t_3=t^{6}e_3$, \ ${\rm E}^t_4=t^{8}e_4$, \ ${\rm E}^t_5=t^{5}e_5$ \\
\hline
$\mathfrak{N}_{02} \to \mathfrak{N}_{05}$ & ${\rm E}^t_1=t^{3}e_1$, \ ${\rm E}^t_2=t^{6}e_2$, \ ${\rm E}^t_3=t^{9}e_3$, \ ${\rm E}^t_4=-t^{4}e_5$, \ ${\rm E}^t_5=t^{8}e_4$ \\
\hline
$\mathfrak{N}_{02} \to \mathfrak{N}_{06}$ & ${\rm E}^t_1=-{\bf i}te_2 + te_5$, \ ${\rm E}^t_2=\frac{1}{2}{\bf i}te_1 + {\bf i}te_3 + te_5$, \ ${\rm E}^t_3={\bf i}te_2 + te_5$,\newline ${\rm E}^t_4=2t^{2}e_4$, \ ${\rm E}^t_5=-\frac{1}{2}t^{2}e_3 + t^{2}e_4$ \\
\hline
$\mathfrak{N}_{03} \to \mathfrak{N}_{48}^{1}$ & ${\rm E}^t_1=-\frac{\sqrt[3]{4}}{2}t^{2}e_1 + \frac{\sqrt[3]{4}}{2}t^{2}e_4$, \ ${\rm E}^t_2=\frac{\sqrt[3]{4}}{2}t^{3}e_1 + \frac{\sqrt[3]{4}}{2}t^{3}e_4$, \ ${\rm E}^t_3=\frac{\sqrt[3]{2}}{2}t^{4}e_2 + \frac{\sqrt[3]{2}}{2}t^{4}e_5$,\newline ${\rm E}^t_4=-\frac{\sqrt[3]{2}}{2}t^{5}e_2 + \frac{\sqrt[3]{2}}{2}t^{5}e_5$, \ ${\rm E}^t_5=t^{7}e_3$ \\
\hline
$\mathfrak{N}_{03} \to \mathfrak{N}_{05}$ & ${\rm E}^t_1=t^{3}e_1$, \ ${\rm E}^t_2=t^{6}e_2$, \ ${\rm E}^t_3=t^{9}e_3$, \ ${\rm E}^t_4=t^{4}e_4$, \ ${\rm E}^t_5=t^{8}e_5$ \\
\hline
\end{longtable}
\endgroup

\medskip

\noindent{\bf Non-degenerations.} The primary non-degenerations are given in Table~6: the first row treats all pairs consisting of an associative and a non-associative algebra at once, and the other $30$ rows contain $47$ primary non-degenerations, and the proofs follow the scheme described in Section~\ref{prelim}. In all rows of Table~6 in which $\mathcal{R}$ is defined in a basis $f_1,\dots,f_5$ of $\mathbb{V}$, except the row $\mathfrak{N}_{43}\not\to\mathfrak{N}_{30}$, the condition of nilpotency
$$A_iA_j\subseteq A_{\max(i,j)+1}\ \mbox{ for all } i,j,\quad\mbox{i.e. } c_{ij}^k=0 \mbox{ for } k\le\max(i,j),$$
is also included in $\mathcal{R}$ and is not repeated in the table. The other conditions are either equations $c_{ij}^k=0$ (which follow from conditions $A_aA_b\subseteq A_{k+1}$ preserved by $\mathcal{B}$), or polynomial equations in the $c_{ij}^k$ which were checked to hold for the structure of $\mathfrak{A}$ in every basis $f_i\mapsto\sum_{j\ge i}b_{ij}f_j$, or the polynomial identities $(xy)z=x(yz)$ and $x(xx)=0$ (Lemma~\ref{lem_inv}). The listed equations $c_{ij}^k=0$ are irredundant: if any one of them is omitted, then one of the targets of the row admits a basis satisfying the remaining conditions. The conditions $\mathcal{R}\cap {\mathcal O}(\lambda)=\varnothing$ were checked by explicit Nullstellensatz certificates on the Bruhat cells (found by Gr\"obner basis computations and verified in Lean~4).

\begingroup
\footnotesize
\renewcommand{\arraystretch}{1.3}\setlength{\tabcolsep}{2pt}
\begin{longtable}{|l|l|}
\caption*{Table 6. {\it Primary non-degenerations of $5$-dimensional nilpotent Jordan algebras.}}\\
\hline
\mbox{Non-degeneration}  &  \mbox{Reason}\\
\hline
\endfirsthead
\hline
\mbox{Non-degeneration}  &  \mbox{Reason}\\
\hline
\endhead
$\begin{array}{@{}l@{}}\mbox{associative algebras}\not\to\\ \mbox{non-associative algebras}\end{array}$ & $\begin{array}{@{}l@{}}
\mathcal{R}=\big\{A\mid (xy)z=x(yz) \mbox{ for all $x,y,z\in A$}\big\}\\
\mathfrak{A}\in\mathcal{R} \mbox{ for every associative algebra $\mathfrak{A}$ (i.e. $\mathfrak{N}_{48}^{1}$ or $\mathfrak{N}_{k}$ with $k\le24$)},\\
\mbox{but } \mathfrak{B}\cap\mathcal{R}=\varnothing \mbox{ for every non-associative algebra $\mathfrak{B}$}
\end{array}$ \\
\hline
$\mathfrak{N}_{04}\not\to \left\{\begin{array}{l}\mathfrak{N}_{14}, \mathfrak{N}_{20}\end{array}\right\}$ & $\begin{array}{@{}l@{}}
\mathcal{R}=\left\{A\ \middle|\ \begin{array}{l}c_{13}^{4}=c_{22}^{3}=c_{22}^{4}=c_{23}^{4}=c_{23}^{5}=0,\\
c_{24}^{5}=c_{33}^{4}=c_{33}^{5}=c_{34}^{5}=c_{44}^{5}=0\end{array}\right\}\\
\mathfrak{N}_{04}\in\mathcal{R} \mbox{ (take $f_1=e_{1}$, $f_2=e_{2}$, $f_3=e_{3}$, $f_4=e_{5}$, $f_5=e_{4}$)},\\
\mbox{but } \mathfrak{B}\cap\mathcal{R}=\varnothing \mbox{ for all $\mathfrak{B}$ in the left column}
\end{array}$ \\
\hline
$\mathfrak{N}_{06}\not\to \mathfrak{N}_{17}$ & $\begin{array}{@{}l@{}}
\mathcal{R}=\big\{A\mid x(xx)=0 \mbox{ for all $x\in A$}\big\}\\
\mathfrak{N}_{06}\in\mathcal{R},\\
\mbox{but } \mathfrak{N}_{17}\cap\mathcal{R}=\varnothing \mbox{ (take $x=e_1$)}
\end{array}$ \\
\hline
$\mathfrak{N}_{15}\not\to \left\{\begin{array}{l}\mathfrak{N}_{11}, \mathfrak{N}_{21}\end{array}\right\}$ & $\begin{array}{@{}l@{}}
\mathcal{R}=\left\{A\ \middle|\ \begin{array}{l}c_{13}^{4}=c_{13}^{5}=c_{14}^{5}=c_{23}^{4}=c_{23}^{5}=0,\\
c_{24}^{5}=c_{33}^{4}=c_{33}^{5}=c_{34}^{5}=c_{44}^{5}=0\end{array}\right\}\\
\mathfrak{N}_{15}\in\mathcal{R},\\
\mbox{but } \mathfrak{B}\cap\mathcal{R}=\varnothing \mbox{ for all $\mathfrak{B}$ in the left column}
\end{array}$ \\
\hline
$\mathfrak{N}_{20}\not\to \mathfrak{N}_{19}$ & $\begin{array}{@{}l@{}}
\mathcal{R}=\left\{A\ \middle|\ \begin{array}{l}c_{11}^{2}=c_{11}^{3}=c_{11}^{4}=c_{12}^{3}=c_{12}^{4}=0,\\
c_{13}^{4}=c_{22}^{3}=c_{22}^{4}=c_{23}^{4}=c_{33}^{4}=0\end{array}\right\}\\
\mathfrak{N}_{20}\in\mathcal{R},\\
\mbox{but } \mathfrak{N}_{19}\cap\mathcal{R}=\varnothing
\end{array}$ \\
\hline
$\mathfrak{N}_{29}\not\to \mathfrak{N}_{11}$ & $\begin{array}{@{}l@{}}
\mathcal{R}=\big\{A\mid c_{12}^{3}=c_{12}^{4}=c_{13}^{4}=c_{23}^{4}=0\big\}\\
\mathfrak{N}_{29}\in\mathcal{R} \mbox{ (take $f_1=e_{1}$, $f_2=e_{3}$, $f_3=e_{4}$, $f_4=e_{2}$, $f_5=e_{5}$)},\\
\mbox{but } \mathfrak{N}_{11}\cap\mathcal{R}=\varnothing
\end{array}$ \\
\hline
$\mathfrak{N}_{31}\not\to \mathfrak{N}_{35}$ & $\begin{array}{@{}l@{}}
\mathcal{R}=\left\{A\ \middle|\ \begin{array}{l}c_{12}^{3}=c_{24}^{5}=c_{33}^{5}=c_{34}^{5}=0,\\
c_{11}^{4}c_{23}^{4}c_{23}^{5}-2c_{12}^{4}c_{13}^{4}c_{23}^{5}+(c_{13}^{4})^{2}c_{22}^{5}=0\end{array}\right\}\\
\mathfrak{N}_{31}\in\mathcal{R} \mbox{ (take $f_1=e_{4}$, $f_2=e_{1}$, $f_3=e_{2}$, $f_4=e_{3}$, $f_5=e_{5}$)},\\
\mbox{but } \mathfrak{N}_{35}\cap\mathcal{R}=\varnothing
\end{array}$ \\
\hline
$\mathfrak{N}_{33}\not\to \mathfrak{N}_{40}$ & $\begin{array}{@{}l@{}}
\mathcal{R}=\big\{A\mid c_{13}^{4}=c_{23}^{4}=c_{33}^{4}=0\big\}\\
\mathfrak{N}_{33}\in\mathcal{R} \mbox{ (take $f_1=e_{1}$, $f_2=e_{2}$, $f_3=e_{4}$, $f_4=e_{3}$, $f_5=e_{5}$)},\\
\mbox{but } \mathfrak{N}_{40}\cap\mathcal{R}=\varnothing
\end{array}$ \\
\hline
$\mathfrak{N}_{36}\not\to \mathfrak{N}_{25}$ & $\begin{array}{@{}l@{}}
\mathcal{R}=\big\{A\mid c_{11}^{2}=c_{11}^{3}=c_{22}^{3}=c_{22}^{4}=c_{24}^{5}=c_{33}^{4}=c_{34}^{5}=0\big\}\\
\mathfrak{N}_{36}\in\mathcal{R} \mbox{ (take $f_1=e_{1}$, $f_2=e_{2}$, $f_3=e_{4}$, $f_4=e_{3}$, $f_5=e_{5}$)},\\
\mbox{but } \mathfrak{N}_{25}\cap\mathcal{R}=\varnothing
\end{array}$ \\
\hline
$\mathfrak{N}_{39}^{\alpha}\not\to \left\{\begin{array}{l}\mathfrak{N}_{28}, \mathfrak{N}_{29},\\ \mathfrak{N}_{52}\end{array}\right\}$ & $\begin{array}{@{}l@{}}
\mathcal{R}=\big\{A\mid c_{12}^{3}=c_{22}^{3}=c_{22}^{4}=c_{23}^{4}=c_{24}^{5}=c_{33}^{4}=c_{34}^{5}=c_{44}^{5}=0\big\}\\
\mathfrak{N}_{39}^{\alpha}\in\mathcal{R} \mbox{ for all $\alpha$} \mbox{ (take $f_1=e_{1}$, $f_2=e_{2}$, $f_3=e_{4}$, $f_4=e_{3}$, $f_5=e_{5}$)},\\
\mbox{but } \mathfrak{B}\cap\mathcal{R}=\varnothing \mbox{ for all $\mathfrak{B}$ in the left column}
\end{array}$ \\
\hline
$\mathfrak{N}_{40}\not\to \mathfrak{N}_{06}$ & $\begin{array}{@{}l@{}}
\mathcal{R}=\big\{A\mid c_{23}^{4}=c_{23}^{5}=c_{33}^{4}=c_{33}^{5}=c_{44}^{5}=0\big\}\\
\mathfrak{N}_{40}\in\mathcal{R} \mbox{ (take $f_1=e_{1}$, $f_2=e_{3}$, $f_3=e_{2}$, $f_4=e_{4}$, $f_5=e_{5}$)},\\
\mbox{but } \mathfrak{N}_{06}\cap\mathcal{R}=\varnothing
\end{array}$ \\
\hline
$\mathfrak{N}_{40}\not\to \mathfrak{N}_{20}$ & $\begin{array}{@{}l@{}}
\mathcal{R}=\big\{A\mid c_{23}^{5}=c_{24}^{5}=c_{33}^{5}=c_{34}^{5}=0\big\}\\
\mathfrak{N}_{40}\in\mathcal{R} \mbox{ (take $f_1=e_{1}$, $f_2=e_{3}$, $f_3=e_{2}$, $f_4=e_{4}$, $f_5=e_{5}$)},\\
\mbox{but } \mathfrak{N}_{20}\cap\mathcal{R}=\varnothing
\end{array}$ \\
\hline
$\mathfrak{N}_{42}\not\to \mathfrak{N}_{31}$ & $\begin{array}{@{}l@{}}
\mathcal{R}=\big\{A\mid c_{23}^{4}=c_{24}^{5}=c_{34}^{5}=0\big\}\\
\mathfrak{N}_{42}\in\mathcal{R} \mbox{ (take $f_1=e_{1}$, $f_2=e_{3}$, $f_3=e_{2}$, $f_4=e_{4}$, $f_5=e_{5}$)},\\
\mbox{but } \mathfrak{N}_{31}\cap\mathcal{R}=\varnothing
\end{array}$ \\
\hline
$\mathfrak{N}_{43}\not\to \mathfrak{N}_{30}$ & $\begin{array}{@{}l@{}}
\mathcal{R}=\left\{A\ \middle|\ \begin{array}{l}c_{11}^{2}=c_{11}^{3}=c_{12}^{3}=c_{13}^{2}=c_{14}^{2}=c_{14}^{3}=0,\\
c_{15}^{2}=c_{22}^{1}=c_{22}^{3}=c_{22}^{4}=c_{23}^{1}=c_{23}^{4}=0,\\
c_{24}^{1}=c_{24}^{3}=c_{24}^{5}=c_{25}^{4}=c_{33}^{1}=c_{33}^{2}=0,\\
c_{33}^{4}=c_{34}^{1}=c_{34}^{5}=c_{35}^{4}=c_{44}^{1}=c_{44}^{2}=c_{44}^{3}=0\end{array}\right\}\\
\mbox{here $A_iA_j\subseteq A_{\max(i,j)+1}$ is not assumed}\\
\mathfrak{N}_{43}\in\mathcal{R} \mbox{ (take $f_1=e_{3}$, $f_2=e_{1}$, $f_3=e_{2}$, $f_4=e_{5}$, $f_5=e_{4}$)},\\
\mbox{but } \mathfrak{N}_{30}\cap\mathcal{R}=\varnothing
\end{array}$ \\
\hline
$\mathfrak{N}_{44}\not\to \mathfrak{N}_{29}$ & $\begin{array}{@{}l@{}}
\mathcal{R}=\big\{A\mid c_{33}^{4}=c_{33}^{5}=c_{34}^{5}=c_{44}^{5}=0\big\}\\
\mathfrak{N}_{44}\in\mathcal{R} \mbox{ (take $f_1=e_{1}$, $f_2=e_{3}$, $f_3=e_{2}$, $f_4=e_{4}$, $f_5=e_{5}$)},\\
\mbox{but } \mathfrak{N}_{29}\cap\mathcal{R}=\varnothing
\end{array}$ \\
\hline
$\mathfrak{N}_{45}\not\to \left\{\begin{array}{l}\mathfrak{N}_{09}, \mathfrak{N}_{15}\end{array}\right\}$ & $\begin{array}{@{}l@{}}
\mathcal{R}=\big\{A\mid c_{11}^{2}=c_{11}^{3}=c_{12}^{3}=c_{22}^{3}=0\big\}\\
\mathfrak{N}_{45}\in\mathcal{R} \mbox{ (take $f_1=e_{1}$, $f_2=e_{3}$, $f_3=e_{2}$, $f_4=e_{4}$, $f_5=e_{5}$)},\\
\mbox{but } \mathfrak{B}\cap\mathcal{R}=\varnothing \mbox{ for all $\mathfrak{B}$ in the left column}
\end{array}$ \\
\hline
$\mathfrak{N}_{46}\not\to \left\{\begin{array}{l}\mathfrak{N}_{03}, \mathfrak{N}_{31},\\ \mathfrak{N}_{58}\end{array}\right\}$ & $\begin{array}{@{}l@{}}
\mathcal{R}=\big\{A\mid c_{12}^{3}=c_{22}^{3}=c_{23}^{4}=c_{24}^{5}=c_{34}^{5}=0\big\}\\
\mathfrak{N}_{46}\in\mathcal{R} \mbox{ (take $f_1=e_{1}$, $f_2=e_{3}$, $f_3=e_{2}$, $f_4=e_{4}$, $f_5=e_{5}$)},\\
\mbox{but } \mathfrak{B}\cap\mathcal{R}=\varnothing \mbox{ for all $\mathfrak{B}$ in the left column}
\end{array}$ \\
\hline
$\mathfrak{N}_{46}\not\to \mathfrak{N}_{40}$ & $\begin{array}{@{}l@{}}
\mathcal{R}=\big\{A\mid c_{24}^{5}=c_{33}^{4}=c_{34}^{5}=0,\ c_{11}^{3}c_{33}^{5}=c_{13}^{4}c_{14}^{5}\big\}\\
\mathfrak{N}_{46}\in\mathcal{R} \mbox{ (take $f_1=e_{1}$, $f_2=e_{3}$, $f_3=e_{2}$, $f_4=e_{4}$, $f_5=e_{5}$)},\\
\mbox{but } \mathfrak{N}_{40}\cap\mathcal{R}=\varnothing
\end{array}$ \\
\hline
$\mathfrak{N}_{47}^{\alpha}\not\to \left\{\begin{array}{l}\mathfrak{N}_{05}, \mathfrak{N}_{52}\end{array}\right\}$ & $\begin{array}{@{}l@{}}
\mathcal{R}=\big\{A\mid c_{11}^{2}=c_{13}^{4}=c_{22}^{3}=c_{22}^{4}=c_{23}^{4}=0\big\}\\
\mathfrak{N}_{47}^{\alpha}\in\mathcal{R} \mbox{ for all $\alpha$},\\
\mbox{but } \mathfrak{B}\cap\mathcal{R}=\varnothing \mbox{ for all $\mathfrak{B}$ in the left column}
\end{array}$ \\
\hline
$\mathfrak{N}_{48}^{-\frac{1}{2}}\not\to \mathfrak{N}_{14}$ & $\begin{array}{@{}l@{}}
\mathcal{R}=\left\{A\ \middle|\ \begin{array}{l}c_{13}^{4}=c_{22}^{3}=c_{22}^{4}=c_{23}^{4}=c_{24}^{5}=c_{33}^{4}=0,\\
c_{33}^{5}=c_{44}^{5}=0,\ c_{11}^{4}c_{23}^{5}-2c_{12}^{4}c_{13}^{5}=0\end{array}\right\}\\
\mathfrak{N}_{48}^{-\frac{1}{2}}\in\mathcal{R},\\
\mbox{but } \mathfrak{N}_{14}\cap\mathcal{R}=\varnothing
\end{array}$ \\
\hline
$\mathfrak{N}_{48}^{-\frac{1}{2}}\not\to \mathfrak{N}_{17}$ & $\begin{array}{@{}l@{}}
\mathcal{R}=\big\{A\mid x(xx)=0 \mbox{ for all $x\in A$}\big\}\\
\mathfrak{N}_{48}^{-\frac{1}{2}}\in\mathcal{R},\\
\mbox{but } \mathfrak{N}_{17}\cap\mathcal{R}=\varnothing \mbox{ (take $x=e_1$)}
\end{array}$ \\
\hline
$\mathfrak{N}_{49}\not\to \left\{\begin{array}{l}\mathfrak{N}_{25}, \mathfrak{N}_{26}\end{array}\right\}$ & $\begin{array}{@{}l@{}}
\mathcal{R}=\left\{A\ \middle|\ \begin{array}{l}c_{11}^{2}=c_{12}^{3}=c_{13}^{4}=c_{22}^{3}=c_{22}^{4}=c_{24}^{5}=0,\\
c_{33}^{4}=c_{34}^{5}=0,\ c_{11}^{3}c_{23}^{5}+2c_{12}^{4}c_{14}^{5}=0\end{array}\right\}\\
\mathfrak{N}_{49}\in\mathcal{R},\\
\mbox{but } \mathfrak{B}\cap\mathcal{R}=\varnothing \mbox{ for all $\mathfrak{B}$ in the left column}
\end{array}$ \\
\hline
$\mathfrak{N}_{50}^{\alpha}\not\to \mathfrak{N}_{58}$ & $\begin{array}{@{}l@{}}
\mathcal{R}=\big\{A\mid c_{11}^{2}=c_{24}^{5}=0,\ c_{11}^{3}c_{22}^{3}=\big(c_{12}^{3}\big)^{2},\ c_{12}^{3}c_{22}^{4}=c_{12}^{4}c_{22}^{3}\big\}\\
\mathfrak{N}_{50}^{\alpha}\in\mathcal{R} \mbox{ for all $\alpha$} \mbox{ (take $f_1=e_{1}$, $f_2=e_{2}$, $f_3=e_{4}$, $f_4=e_{3}$, $f_5=e_{5}$)},\\
\mbox{but } \mathfrak{N}_{58}\cap\mathcal{R}=\varnothing
\end{array}$ \\
\hline
$\mathfrak{N}_{51}^{\alpha,\beta}\not\to \left\{\begin{array}{l}\mathfrak{N}_{04}, \mathfrak{N}_{06},\\ \mathfrak{N}_{29}, \mathfrak{N}_{35},\\ \mathfrak{N}_{54}\end{array}\right\}$ & $\begin{array}{@{}l@{}}
\mathcal{R}=\big\{A\mid c_{11}^{2}=c_{13}^{4}=c_{23}^{4}=c_{33}^{4}=c_{33}^{5}=c_{34}^{5}=c_{44}^{5}=0\big\}\\
\mathfrak{N}_{51}^{\alpha,\beta}\in\mathcal{R} \mbox{ for all $\alpha,\beta$},\\
\mbox{but } \mathfrak{B}\cap\mathcal{R}=\varnothing \mbox{ for all $\mathfrak{B}$ in the left column}
\end{array}$ \\
\hline
$\mathfrak{N}_{56}\not\to \mathfrak{N}_{26}$ & $\begin{array}{@{}l@{}}
\mathcal{R}=\big\{A\mid c_{12}^{3}=c_{14}^{5}=c_{22}^{3}=c_{24}^{5}=0\big\}\\
\mathfrak{N}_{56}\in\mathcal{R} \mbox{ (take $f_1=e_{1}$, $f_2=e_{3}$, $f_3=e_{2}$, $f_4=e_{5}$, $f_5=e_{4}$)},\\
\mbox{but } \mathfrak{N}_{26}\cap\mathcal{R}=\varnothing
\end{array}$ \\
\hline
$\mathfrak{N}_{57}\not\to \left\{\begin{array}{l}\mathfrak{N}_{18}, \mathfrak{N}_{21}\end{array}\right\}$ & $\begin{array}{@{}l@{}}
\mathcal{R}=\left\{A\ \middle|\ \begin{array}{l}c_{22}^{3}=c_{22}^{4}=c_{22}^{5}=c_{23}^{4}=c_{23}^{5}=0,\\
c_{24}^{5}=c_{33}^{4}=c_{33}^{5}=c_{34}^{5}=c_{44}^{5}=0\end{array}\right\}\\
\mathfrak{N}_{57}\in\mathcal{R} \mbox{ (take $f_1=e_{1}$, $f_2=e_{2}$, $f_3=e_{3}$, $f_4=e_{5}$, $f_5=e_{4}$)},\\
\mbox{but } \mathfrak{B}\cap\mathcal{R}=\varnothing \mbox{ for all $\mathfrak{B}$ in the left column}
\end{array}$ \\
\hline
$\mathfrak{N}_{58}\not\to \mathfrak{N}_{25}$ & $\begin{array}{@{}l@{}}
\mathcal{R}=\big\{A\mid c_{14}^{5}=c_{22}^{3}=c_{23}^{4}=c_{23}^{5}=c_{24}^{5}=c_{34}^{5}=0\big\}\\
\mathfrak{N}_{58}\in\mathcal{R},\\
\mbox{but } \mathfrak{N}_{25}\cap\mathcal{R}=\varnothing
\end{array}$ \\
\hline
$\mathfrak{N}_{59}\not\to \left\{\begin{array}{l}\mathfrak{N}_{05}, \mathfrak{N}_{57}\end{array}\right\}$ & $\begin{array}{@{}l@{}}
\mathcal{R}=\left\{A\ \middle|\ \begin{array}{l}c_{11}^{2}=c_{14}^{5}=c_{22}^{3}=c_{22}^{4}=c_{23}^{4}=0,\\
2(c_{11}^{3})^{2}c_{12}^{4}c_{23}^{5}-(c_{11}^{3})^{2}c_{13}^{4}c_{22}^{5}-2c_{11}^{3}c_{11}^{4}c_{12}^{3}c_{23}^{5}-4c_{11}^{3}c_{12}^{3}c_{12}^{4}c_{13}^{5}\\
\quad{}+4c_{11}^{3}c_{12}^{3}c_{12}^{5}c_{13}^{4}+4c_{11}^{4}(c_{12}^{3})^{2}c_{13}^{5}-4c_{11}^{5}(c_{12}^{3})^{2}c_{13}^{4}=0\end{array}\right\}\\
\mathfrak{N}_{59}\in\mathcal{R},\\
\mbox{but } \mathfrak{B}\cap\mathcal{R}=\varnothing \mbox{ for all $\mathfrak{B}$ in the left column}
\end{array}$ \\
\hline
$\mathfrak{N}_{60}\not\to \mathfrak{N}_{52}$ & $\begin{array}{@{}l@{}}
\mathcal{R}=\big\{A\mid c_{22}^{3}=c_{22}^{4}=c_{23}^{4}=c_{33}^{4}=0\big\}\\
\mathfrak{N}_{60}\in\mathcal{R},\\
\mbox{but } \mathfrak{N}_{52}\cap\mathcal{R}=\varnothing
\end{array}$ \\
\hline
$\mathfrak{N}_{61}\not\to \mathfrak{N}_{14}$ & $\begin{array}{@{}l@{}}
\mathcal{R}=\left\{A\ \middle|\ \begin{array}{l}c_{13}^{4}=c_{22}^{3}=c_{22}^{4}=c_{23}^{4}=0,\\
c_{23}^{5}=c_{24}^{5}=c_{33}^{4}=c_{33}^{5}=c_{44}^{5}=0\end{array}\right\}\\
\mathfrak{N}_{61}\in\mathcal{R} \mbox{ (take $f_1=e_{1}$, $f_2=e_{2}$, $f_3=e_{3}$, $f_4=e_{5}$, $f_5=e_{4}$)},\\
\mbox{but } \mathfrak{N}_{14}\cap\mathcal{R}=\varnothing
\end{array}$ \\
\hline
$\mathfrak{N}_{62}\not\to \left\{\begin{array}{l}\mathfrak{N}_{04}, \mathfrak{N}_{20},\\ \mathfrak{N}_{35}\end{array}\right\}$ & $\begin{array}{@{}l@{}}
\mathcal{R}=\big\{A\mid c_{11}^{2}=c_{14}^{5}=c_{24}^{5}=c_{34}^{5}=c_{44}^{5}=0\big\}\\
\mathfrak{N}_{62}\in\mathcal{R},\\
\mbox{but } \mathfrak{B}\cap\mathcal{R}=\varnothing \mbox{ for all $\mathfrak{B}$ in the left column}
\end{array}$ \\
\hline
\end{longtable}
\endgroup

\noindent This completes the proof of Theorem~B.
\end{proof}

\enlargethispage{\baselineskip}
{\footnotesize\noindent{\bf The graph of primary degenerations.} The graph of primary degenerations of $5$-dimensional nilpotent Jordan algebras is given on the next page (the numbers on the left are the dimensions of the orbit closures; the rigid algebras and the family $\mathfrak{N}_{51}^{\alpha,\beta}$ giving the irreducible components are shown in dark boxes; the dashed boxes are the members $\mathfrak{N}_{48}^{1}$ and $\mathfrak{N}_{48}^{-\frac12}$ of the family $\mathfrak{N}_{48}^{\alpha}$; the colour of a solid arrow is the colour of the level of its initial vertex; for a family source, a label $\gamma\mapsto\delta$ (in a red frame, joined to its arrow by a red line) means that the member with parameter $\gamma$ degenerates to the member with parameter $\delta$ of the target, and an unlabelled arrow to a family means a degeneration to every member of the family; magenta dashed arrows are degenerations which exist only for the closure of the union of the orbits of the family; thick blue dashed arrows point from the family $\mathfrak{N}_{48}^{\alpha}$ to its members $\mathfrak{N}_{48}^{1}$ and $\mathfrak{N}_{48}^{-\frac12}$).\par}

\clearpage
\newgeometry{left=0.8cm,right=0.8cm,top=0.8cm,bottom=0.8cm}
\thispagestyle{empty}
{\scriptsize
\begin{center}
\begin{adjustbox}{max width=\textwidth, max totalheight=0.98\textheight}
\begin{tikzpicture}[x=1cm,y=1.26cm,vx/.style={draw,rounded corners=3pt,inner sep=2.2pt,minimum height=13pt,font=\small,fill=white},sh/.style={vx,fill=gray!35},sp/.style={vx,dashed},lab/.style={font=\tiny,inner sep=0.6pt,fill=white,sloped},elab/.style={font=\scriptsize,text=black,inner sep=0.8pt,fill=white,sloped},plab/.style={font=\footnotesize,text=red!65!black,draw=red!65!black,rounded corners=2pt,inner sep=1.5pt,fill=white,line width=0.4pt},plabline/.style={red!65!black,line width=0.5pt}]
\node[font=\small,text=red!75!black] at (-1.50,0.00) {22};
\node[font=\small,text=blue!85!black] at (-1.50,-1.23) {21};
\node[font=\small,text=green!50!black] at (-1.50,-3.67) {20};
\node[font=\small,text=orange!85!black] at (-1.50,-7.87) {19};
\node[font=\small,text=violet] at (-1.50,-12.73) {18};
\node[font=\small,text=brown] at (-1.50,-17.37) {17};
\node[font=\small,text=teal] at (-1.50,-20.36) {16};
\node[font=\small,text=purple!70!black] at (-1.50,-22.69) {15};
\node[font=\small,text=olive] at (-1.50,-24.80) {14};
\node[font=\small,text=pink!60!black] at (-1.50,-26.25) {13};
\node[font=\small,text=cyan!60!black] at (-1.50,-27.37) {11};
\node[font=\small,text=gray!80!black] at (-1.50,-28.38) {8};
\node[font=\small,text=black] at (-1.50,-29.39) {0};
\node[sh] (J01) at (1.30,-3.67) {$\mathfrak{N}_{01}$};
\node[vx] (J02) at (1.08,-7.87) {$\mathfrak{N}_{02}$};
\node[vx] (J03) at (3.25,-7.87) {$\mathfrak{N}_{03}$};
\node[sp] (J04) at (7.00,-12.73) {$\mathfrak{N}_{48}^{1}$};
\node[vx] (J05) at (1.00,-12.73) {$\mathfrak{N}_{04}$};
\node[vx] (J06) at (3.00,-12.73) {$\mathfrak{N}_{05}$};
\node[vx] (J07) at (5.00,-12.73) {$\mathfrak{N}_{06}$};
\node[vx] (J08) at (6.60,-17.37) {$\mathfrak{N}_{07}$};
\node[vx] (J09) at (9.80,-17.37) {$\mathfrak{N}_{08}$};
\node[vx] (J10) at (6.60,-20.36) {$\mathfrak{N}_{09}$};
\node[vx] (J11) at (13.00,-20.36) {$\mathfrak{N}_{10}$};
\node[vx] (J12) at (8.20,-24.80) {$\mathfrak{N}_{11}$};
\node[vx] (J13) at (22.60,-17.37) {$\mathfrak{N}_{12}$};
\node[vx] (J14) at (16.20,-20.36) {$\mathfrak{N}_{13}$};
\node[vx] (J15) at (19.40,-20.36) {$\mathfrak{N}_{14}$};
\node[vx] (J16) at (11.40,-22.69) {$\mathfrak{N}_{15}$};
\node[vx] (J17) at (14.60,-22.69) {$\mathfrak{N}_{16}$};
\node[vx] (J18) at (11.40,-24.80) {$\mathfrak{N}_{17}$};
\node[vx] (J19) at (14.60,-24.80) {$\mathfrak{N}_{18}$};
\node[vx] (J20) at (11.40,-26.25) {$\mathfrak{N}_{19}$};
\node[vx] (J21) at (17.80,-24.80) {$\mathfrak{N}_{20}$};
\node[vx] (J22) at (14.60,-26.25) {$\mathfrak{N}_{21}$};
\node[vx] (J23) at (13.00,-27.37) {$\mathfrak{N}_{22}$};
\node[vx] (J24) at (13.00,-28.38) {$\mathfrak{N}_{23}$};
\node[vx] (J25) at (13.00,-29.39) {$\mathfrak{N}_{24}$};
\node[vx] (J26) at (19.40,-17.37) {$\mathfrak{N}_{25}$};
\node[vx] (J27) at (9.80,-20.36) {$\mathfrak{N}_{26}$};
\node[vx] (J28) at (16.20,-17.37) {$\mathfrak{N}_{27}$};
\node[vx] (J29) at (23.00,-12.73) {$\mathfrak{N}_{28}$};
\node[vx] (J30) at (22.75,-7.87) {$\mathfrak{N}_{29}$};
\node[vx] (J31) at (25.00,-12.73) {$\mathfrak{N}_{30}$};
\node[vx] (J32) at (9.75,-7.87) {$\mathfrak{N}_{31}$};
\node[vx] (J33) at (6.50,-3.67) {$\mathfrak{N}_{32}$};
\node[vx] (J34) at (8.20,-1.23) {$\mathfrak{N}_{33}$};
\node[vx] (J35) at (14.30,-3.67) {$\mathfrak{N}_{34}$};
\node[vx] (J36) at (9.00,-12.73) {$\mathfrak{N}_{35}$};
\node[vx] (J37) at (5.42,-7.87) {$\mathfrak{N}_{36}$};
\node[vx] (J38) at (20.58,-7.87) {$\mathfrak{N}_{37}$};
\node[vx] (J39) at (21.00,-12.73) {$\mathfrak{N}_{38}$};
\node[vx] (J41) at (11.92,-7.87) {$\mathfrak{N}_{40}$};
\node[vx] (J42) at (11.70,-3.67) {$\mathfrak{N}_{41}$};
\node[vx] (J43) at (11.40,-1.23) {$\mathfrak{N}_{42}$};
\node[vx] (J44) at (3.90,-3.67) {$\mathfrak{N}_{43}$};
\node[vx] (J45) at (5.00,-1.23) {$\mathfrak{N}_{44}$};
\node[sh] (J46) at (13.00,0.00) {$\mathfrak{N}_{45}$};
\node[sh] (J47) at (14.60,-1.23) {$\mathfrak{N}_{46}$};
\node[vx] (J50) at (17.00,-12.73) {$\mathfrak{N}_{49}$};
\node[vx] (J53) at (13.00,-12.73) {$\mathfrak{N}_{52}$};
\node[vx] (J54) at (14.08,-7.87) {$\mathfrak{N}_{53}$};
\node[vx] (J55) at (11.00,-12.73) {$\mathfrak{N}_{54}$};
\node[vx] (J56) at (7.58,-7.87) {$\mathfrak{N}_{55}$};
\node[vx] (J57) at (16.90,-3.67) {$\mathfrak{N}_{56}$};
\node[vx] (J58) at (3.40,-17.37) {$\mathfrak{N}_{57}$};
\node[vx] (J59) at (19.00,-12.73) {$\mathfrak{N}_{58}$};
\node[vx] (J60) at (16.25,-7.87) {$\mathfrak{N}_{59}$};
\node[vx] (J61) at (22.10,-3.67) {$\mathfrak{N}_{60}$};
\node[vx] (J62) at (15.00,-12.73) {$\mathfrak{N}_{61}$};
\node[sh] (J63) at (17.80,-1.23) {$\mathfrak{N}_{62}$};
\node[vx] (J64) at (24.92,-7.87) {$\mathfrak{N}_{63}$};
\node[sp] (J49s) at (13.00,-17.37) {$\mathfrak{N}_{48}^{-\frac12}$};
\node[vx] (J40) at (9.10,-3.67) {$\mathfrak{N}_{39}^{\alpha}$};
\node[vx] (J48) at (24.70,-3.67) {$\mathfrak{N}_{47}^{\alpha}$};
\node[vx] (J49) at (18.42,-7.87) {$\mathfrak{N}_{48}^{\alpha}$};
\node[vx] (J51) at (19.50,-3.67) {$\mathfrak{N}_{50}^{\alpha}$};
\node[sh] (J52) at (21.00,-1.23) {$\mathfrak{N}_{51}^{\alpha,\beta}$};
\begin{scope}[on background layer]
\draw[->,>={Stealth[length=4pt]},semithick,draw=green!50!black] (J01) -- (J02);
\draw[->,>={Stealth[length=4pt]},semithick,draw=green!50!black] (J01) -- (J03);
\draw[->,>={Stealth[length=4pt]},semithick,draw=orange!85!black] (J02) -- (J04);
\draw[->,>={Stealth[length=4pt]},semithick,draw=orange!85!black] (J02) -- (J05);
\draw[->,>={Stealth[length=4pt]},semithick,draw=orange!85!black] (J02) -- (J06);
\draw[->,>={Stealth[length=4pt]},semithick,draw=orange!85!black] (J02) -- (J07);
\draw[->,>={Stealth[length=4pt]},semithick,draw=orange!85!black] (J03) -- (J04);
\draw[->,>={Stealth[length=4pt]},semithick,draw=orange!85!black] (J03) -- (J06);
\draw[->,>={Stealth[length=4pt]},semithick,draw=violet] (J04) -- (J08);
\draw[->,>={Stealth[length=4pt]},semithick,draw=violet] (J04) -- (J09);
\draw[->,>={Stealth[length=4pt]},semithick,draw=violet] (J04) -- (J13);
\draw[->,>={Stealth[length=4pt]},semithick,draw=violet] (J05) -- (J08);
\draw[->,>={Stealth[length=4pt]},semithick,draw=violet] (J06) -- (J08);
\draw[->,>={Stealth[length=4pt]},semithick,draw=violet] (J06) -- (J09);
\draw[->,>={Stealth[length=4pt]},semithick,draw=violet] (J07) -- (J09);
\draw[->,>={Stealth[length=4pt]},semithick,draw=brown] (J08) -- (J10);
\draw[->,>={Stealth[length=4pt]},semithick,draw=brown] (J08) -- (J11);
\draw[->,>={Stealth[length=4pt]},semithick,draw=brown] (J08) -- (J14);
\draw[->,>={Stealth[length=4pt]},semithick,draw=brown] (J08) -- (J16);
\draw[->,>={Stealth[length=4pt]},semithick,draw=brown] (J09) -- (J11);
\draw[->,>={Stealth[length=4pt]},semithick,draw=brown] (J09) -- (J15);
\draw[->,>={Stealth[length=4pt]},semithick,draw=teal] (J10) -- (J12);
\draw[->,>={Stealth[length=4pt]},semithick,draw=teal] (J10) -- (J18);
\draw[->,>={Stealth[length=4pt]},semithick,draw=teal] (J11) -- (J12);
\draw[->,>={Stealth[length=4pt]},semithick,draw=teal] (J11) -- (J17);
\draw[->,>={Stealth[length=4pt]},semithick,draw=olive] (J12) -- (J20);
\draw[->,>={Stealth[length=4pt]},semithick,draw=brown] (J13) -- (J14);
\draw[->,>={Stealth[length=4pt]},semithick,draw=brown] (J13) -- (J15);
\draw[->,>={Stealth[length=4pt]},semithick,draw=brown] (J13) -- (J21);
\draw[->,>={Stealth[length=4pt]},semithick,draw=teal] (J14) -- (J17);
\draw[->,>={Stealth[length=4pt]},semithick,draw=teal] (J14) -- (J18);
\draw[->,>={Stealth[length=4pt]},semithick,draw=teal] (J15) -- (J17);
\draw[->,>={Stealth[length=4pt]},semithick,draw=purple!70!black] (J16) -- (J19);
\draw[->,>={Stealth[length=4pt]},semithick,draw=purple!70!black] (J17) -- (J19);
\draw[->,>={Stealth[length=4pt]},semithick,draw=purple!70!black] (J17) -- (J22);
\draw[->,>={Stealth[length=4pt]},semithick,draw=olive] (J18) -- (J20);
\draw[->,>={Stealth[length=4pt]},semithick,draw=olive] (J19) -- (J20);
\draw[->,>={Stealth[length=4pt]},semithick,draw=pink!60!black] (J20) -- (J23);
\draw[->,>={Stealth[length=4pt]},semithick,draw=olive] (J21) -- (J22);
\draw[->,>={Stealth[length=4pt]},semithick,draw=pink!60!black] (J22) -- (J23);
\draw[->,>={Stealth[length=4pt]},semithick,draw=cyan!60!black] (J23) -- (J24);
\draw[->,>={Stealth[length=4pt]},semithick,draw=gray!80!black] (J24) -- (J25);
\draw[->,>={Stealth[length=4pt]},semithick,draw=brown] (J26) -- (J14);
\draw[->,>={Stealth[length=4pt]},semithick,draw=brown] (J26) -- (J15);
\draw[->,>={Stealth[length=4pt]},semithick,draw=teal] (J27) -- (J12);
\draw[->,>={Stealth[length=4pt]},semithick,draw=teal] (J27) -- (J18);
\draw[->,>={Stealth[length=4pt]},semithick,draw=brown] (J28) -- (J11);
\draw[->,>={Stealth[length=4pt]},semithick,draw=brown] (J28) -- (J14);
\draw[->,>={Stealth[length=4pt]},semithick,draw=brown] (J28) -- (J27);
\draw[->,>={Stealth[length=4pt]},semithick,draw=violet] (J29) -- (J09);
\draw[->,>={Stealth[length=4pt]},semithick,draw=violet] (J29) -- (J26);
\draw[->,>={Stealth[length=4pt]},semithick,draw=violet] (J29) -- (J28);
\draw[->,>={Stealth[length=4pt]},semithick,draw=orange!85!black] (J30) -- (J31);
\draw[->,>={Stealth[length=4pt]},semithick,draw=violet] (J31) -- (J13);
\draw[->,>={Stealth[length=4pt]},semithick,draw=violet] (J31) -- (J26);
\draw[->,>={Stealth[length=4pt]},semithick,draw=orange!85!black] (J32) -- (J07);
\draw[->,>={Stealth[length=4pt]},semithick,draw=orange!85!black] (J32) -- (J29);
\draw[->,>={Stealth[length=4pt]},semithick,draw=orange!85!black] (J32) -- (J39);
\draw[->,>={Stealth[length=4pt]},semithick,draw=green!50!black] (J33) -- (J32);
\draw[->,>={Stealth[length=4pt]},semithick,draw=green!50!black] (J33) -- (J37);
\draw[->,>={Stealth[length=4pt]},semithick,draw=green!50!black] (J33) -- (J38);
\draw[->,>={Stealth[length=4pt]},semithick,draw=blue!85!black] (J34) -- (J33);
\draw[->,>={Stealth[length=4pt]},semithick,draw=blue!85!black] (J34) -- (J35);
\draw[->,>={Stealth[length=4pt]},semithick,draw=green!50!black] (J35) -- (J30);
\draw[->,>={Stealth[length=4pt]},semithick,draw=green!50!black] (J35) -- (J37);
\draw[->,>={Stealth[length=4pt]},semithick,draw=green!50!black] (J35) -- (J38);
\draw[->,>={Stealth[length=4pt]},semithick,draw=violet] (J36) -- (J09);
\draw[->,>={Stealth[length=4pt]},semithick,draw=violet] (J36) -- (J28);
\draw[->,>={Stealth[length=4pt]},semithick,draw=orange!85!black] (J37) -- (J07);
\draw[->,>={Stealth[length=4pt]},semithick,draw=orange!85!black] (J37) -- (J36);
\draw[->,>={Stealth[length=4pt]},semithick,draw=orange!85!black] (J37) -- (J39);
\draw[->,>={Stealth[length=4pt]},semithick,draw=orange!85!black] (J38) -- (J29);
\draw[->,>={Stealth[length=4pt]},semithick,draw=orange!85!black] (J38) -- (J31);
\draw[->,>={Stealth[length=4pt]},semithick,draw=orange!85!black] (J38) -- (J39);
\draw[->,>={Stealth[length=4pt]},semithick,draw=violet] (J39) -- (J09);
\draw[->,>={Stealth[length=4pt]},semithick,draw=violet] (J39) -- (J13);
\draw[->,>={Stealth[length=4pt]},semithick,draw=violet] (J39) -- (J28);
\draw[->,>={Stealth[length=4pt]},semithick,draw=magenta,dashed] (J40) -- (J02);
\draw[->,>={Stealth[length=4pt]},semithick,draw=magenta,dashed] (J40) -- (J37);
\draw[->,>={Stealth[length=4pt]},semithick,draw=green!50!black] (J40) -- node[pos=0.55,coordinate] (m40) {} (J49);
\node[plab] (L40) at (13.63,-6.69) {$1-p^{-2}\mapsto 1+p$};
\draw[plabline] (L40) -- (m40);
\draw[->,>={Stealth[length=4pt]},semithick,draw=magenta,dashed] (J40) -- (J56);
\draw[->,>={Stealth[length=4pt]},semithick,draw=orange!85!black] (J41) -- (J29);
\draw[->,>={Stealth[length=4pt]},semithick,draw=orange!85!black] (J41) -- (J36);
\draw[->,>={Stealth[length=4pt]},semithick,draw=green!50!black] (J42) -- (J37);
\draw[->,>={Stealth[length=4pt]},semithick,draw=green!50!black] (J42) -- (J38);
\draw[->,>={Stealth[length=4pt]},semithick,draw=green!50!black] (J42) -- (J41);
\draw[->,>={Stealth[length=4pt]},semithick,draw=blue!85!black] (J43) -- (J35);
\draw[->,>={Stealth[length=4pt]},semithick,draw=blue!85!black] (J43) -- (J42);
\draw[->,>={Stealth[length=4pt]},semithick,draw=green!50!black] (J44) -- (J32);
\draw[->,>={Stealth[length=4pt]},semithick,draw=green!50!black] (J44) -- (J37);
\draw[->,>={Stealth[length=4pt]},semithick,draw=green!50!black] (J44) -- (J41);
\draw[->,>={Stealth[length=4pt]},semithick,draw=blue!85!black] (J45) -- (J33);
\draw[->,>={Stealth[length=4pt]},semithick,draw=blue!85!black] (J45) -- (J42);
\draw[->,>={Stealth[length=4pt]},semithick,draw=blue!85!black] (J45) -- (J44);
\draw[->,>={Stealth[length=4pt]},semithick,draw=red!75!black] (J46) -- (J34);
\draw[->,>={Stealth[length=4pt]},semithick,draw=red!75!black] (J46) -- (J43);
\draw[->,>={Stealth[length=4pt]},semithick,draw=red!75!black] (J46) -- (J45);
\draw[->,>={Stealth[length=4pt]},semithick,draw=blue!85!black] (J47) -- (J35);
\draw[->,>={Stealth[length=4pt]},semithick,draw=blue!85!black] (J47) -- (J40);
\draw[->,>={Stealth[length=4pt]},semithick,draw=blue!85!black] (J47) -- (J57);
\draw[->,>={Stealth[length=4pt]},semithick,draw=magenta,dashed] (J48) -- (J38);
\draw[->,>={Stealth[length=4pt]},semithick,draw=green!50!black] (J48) -- node[pos=0.55,coordinate] (m48) {} (J49);
\node[plab] (L48) at (22.95,-7.09) {$-2p-p^{-1}\mapsto -p^2$};
\draw[plabline] (L48) -- (m48);
\draw[->,>={Stealth[length=4pt]},semithick,draw=magenta,dashed] (J48) -- (J64);
\draw[->,>={Stealth[length=5pt]},line width=1.3pt,draw=blue!65!cyan,dash pattern=on 5pt off 2pt] (J49) -- (J04);
\draw[->,>={Stealth[length=4pt]},semithick,draw=magenta,dashed] (J49) -- (J31);
\draw[->,>={Stealth[length=4pt]},semithick,draw=magenta,dashed] (J49) -- (J39);
\draw[->,>={Stealth[length=4pt]},semithick,draw=magenta,dashed] (J49) -- (J50);
\draw[->,>={Stealth[length=4pt]},semithick,draw=magenta,dashed] (J49) -- (J62);
\draw[->,>={Stealth[length=4pt]},semithick,draw=brown] (J49s) -- (J11);
\draw[->,>={Stealth[length=4pt]},semithick,draw=brown] (J49s) -- (J16);
\draw[->,>={Stealth[length=4pt]},semithick,draw=brown] (J49s) -- (J21);
\draw[->,>={Stealth[length=4pt]},semithick,draw=violet] (J50) -- (J08);
\draw[->,>={Stealth[length=4pt]},semithick,draw=violet] (J50) -- (J09);
\draw[->,>={Stealth[length=4pt]},semithick,draw=violet] (J50) -- (J13);
\draw[->,>={Stealth[length=4pt]},semithick,draw=violet] (J50) -- (J49s);
\draw[->,>={Stealth[length=4pt]},semithick,draw=magenta,dashed] (J51) -- (J03);
\draw[->,>={Stealth[length=4pt]},semithick,draw=magenta,dashed] (J51) -- (J38);
\draw[->,>={Stealth[length=4pt]},semithick,draw=green!50!black] (J51) -- node[pos=0.15,coordinate] (m51) {} (J49);
\node[plab] (L51) at (18.54,-3.03) {$-p^3-p\mapsto \frac{p^2}{1+p^2}$};
\draw[plabline] (L51) -- (m51);
\draw[->,>={Stealth[length=4pt]},semithick,draw=magenta,dashed] (J51) -- (J54);
\draw[->,>={Stealth[length=4pt]},semithick,draw=magenta,dashed] (J52) -- (J48);
\draw[->,>={Stealth[length=4pt]},semithick,draw=magenta,dashed] (J52) -- (J51);
\draw[->,>={Stealth[length=4pt]},semithick,draw=violet] (J53) -- (J08);
\draw[->,>={Stealth[length=4pt]},semithick,draw=violet] (J53) -- (J09);
\draw[->,>={Stealth[length=4pt]},semithick,draw=violet] (J53) -- (J26);
\draw[->,>={Stealth[length=4pt]},semithick,draw=orange!85!black] (J54) -- (J06);
\draw[->,>={Stealth[length=4pt]},semithick,draw=orange!85!black] (J54) -- (J53);
\draw[->,>={Stealth[length=4pt]},semithick,draw=violet] (J55) -- (J08);
\draw[->,>={Stealth[length=4pt]},semithick,draw=violet] (J55) -- (J09);
\draw[->,>={Stealth[length=4pt]},semithick,draw=violet] (J55) -- (J26);
\draw[->,>={Stealth[length=4pt]},semithick,draw=orange!85!black] (J56) -- (J06);
\draw[->,>={Stealth[length=4pt]},semithick,draw=orange!85!black] (J56) -- (J07);
\draw[->,>={Stealth[length=4pt]},semithick,draw=orange!85!black] (J56) -- (J55);
\draw[->,>={Stealth[length=4pt]},semithick,draw=green!50!black] (J57) -- (J54);
\draw[->,>={Stealth[length=4pt]},semithick,draw=green!50!black] (J57) -- (J56);
\draw[->,>={Stealth[length=4pt]},semithick,draw=brown] (J58) -- (J10);
\draw[->,>={Stealth[length=4pt]},semithick,draw=brown] (J58) -- (J27);
\draw[->,>={Stealth[length=4pt]},semithick,draw=violet] (J59) -- (J08);
\draw[->,>={Stealth[length=4pt]},semithick,draw=violet] (J59) -- (J09);
\draw[->,>={Stealth[length=4pt]},semithick,draw=violet] (J59) -- (J28);
\draw[->,>={Stealth[length=4pt]},semithick,draw=orange!85!black] (J60) -- (J07);
\draw[->,>={Stealth[length=4pt]},semithick,draw=orange!85!black] (J60) -- (J29);
\draw[->,>={Stealth[length=4pt]},semithick,draw=orange!85!black] (J60) -- (J55);
\draw[->,>={Stealth[length=4pt]},semithick,draw=orange!85!black] (J60) -- (J59);
\draw[->,>={Stealth[length=4pt]},semithick,draw=green!50!black] (J61) -- (J56);
\draw[->,>={Stealth[length=4pt]},semithick,draw=green!50!black] (J61) -- (J60);
\draw[->,>={Stealth[length=4pt]},semithick,draw=green!50!black] (J61) -- (J64);
\draw[->,>={Stealth[length=4pt]},semithick,draw=violet] (J62) -- (J08);
\draw[->,>={Stealth[length=4pt]},semithick,draw=violet] (J62) -- (J28);
\draw[->,>={Stealth[length=4pt]},semithick,draw=violet] (J62) -- (J58);
\draw[->,>={Stealth[length=4pt]},semithick,draw=blue!85!black] (J63) -- (J57);
\draw[->,>={Stealth[length=4pt]},semithick,draw=blue!85!black] (J63) -- (J61);
\draw[->,>={Stealth[length=4pt]},semithick,draw=orange!85!black] (J64) -- (J29);
\draw[->,>={Stealth[length=4pt]},semithick,draw=orange!85!black] (J64) -- (J59);
\draw[->,>={Stealth[length=4pt]},semithick,draw=orange!85!black] (J64) -- (J62);
\draw[->,>={Stealth[length=5pt]},line width=1.3pt,draw=blue!65!cyan,dash pattern=on 5pt off 2pt] (J49) -- (J49s);
\end{scope}
\end{tikzpicture}
\end{adjustbox}
\end{center}
}
\clearpage
\restoregeometry
\pdfpagewidth=210mm \pdfpageheight=297mm \setlength{\headwidth}{\textwidth}

\begin{proposition}\label{prop:fam}
\begin{enumerate}
\item The following degenerations hold only for the closure of the union of the orbits of the
family; they are realized by members depending on $t$ (Table~5), and no single member degenerates to the target:
\[
\begin{gathered}
\mathfrak{N}_{39}^{t^{-2}}\to\mathfrak{N}_{02},\quad \mathfrak{N}_{39}^{t^{2}+1}\to\mathfrak{N}_{36},\quad \mathfrak{N}_{39}^{-2t}\to\mathfrak{N}_{55};\qquad
\mathfrak{N}_{47}^{-2t^{-1}}\to\mathfrak{N}_{37},\quad \mathfrak{N}_{47}^{2t^{-1}}\to\mathfrak{N}_{63};\\
\mathfrak{N}_{48}^{2t}\to\mathfrak{N}_{30},\quad \mathfrak{N}_{48}^{t^{-1}}\to\mathfrak{N}_{38},\quad \mathfrak{N}_{48}^{t-\frac12}\to\mathfrak{N}_{49},\quad \mathfrak{N}_{48}^{2t^{-1}}\to\mathfrak{N}_{61};\\
\mathfrak{N}_{50}^{t^{-3}}\to\mathfrak{N}_{03},\quad \mathfrak{N}_{50}^{-\frac14{\bf i}t}\to\mathfrak{N}_{37},\quad \mathfrak{N}_{50}^{-t^{3}-t^{2}-t}\to\mathfrak{N}_{53};\\
\mathfrak{N}_{51}^{\alpha t-1,\,b(t)}\to\mathfrak{N}_{47}^{\alpha},\quad \mathfrak{N}_{51}^{\alpha t,\,t^{-2}}\to\mathfrak{N}_{50}^{\alpha},
\end{gathered}
\]
where $b(t)=-\alpha^{2}t^{2}(\alpha t+1)^{2}-\alpha t-t^{2}-1$.
\item The degenerations between families are realized by single members:
$\mathfrak{N}_{39}^{1-p^{-2}}\to\mathfrak{N}_{48}^{1+p}$, $\mathfrak{N}_{47}^{-2p-p^{-1}}\to\mathfrak{N}_{48}^{-p^2}$,
$\mathfrak{N}_{50}^{-p^3-p}\to\mathfrak{N}_{48}^{\frac{p^2}{1+p^2}}$ (for all admissible $p$), and
$\mathfrak{N}_{46}\to\mathfrak{N}_{39}^{\alpha}$ for every $\alpha$. Since the table of $\mathfrak{N}_{48}^{\alpha}$ is polynomial in
$\alpha$, for $\mu^{\beta}=\mathfrak{N}_{39}^{\beta},\mathfrak{N}_{47}^{\beta},\mathfrak{N}_{50}^{\beta}$ the set of $\alpha$ with $\mathfrak{N}_{48}^{\alpha}\in\overline{\{{\mathcal O}(\mu^{\beta})\}}$ is closed and
infinite, hence it is everything; that is, every member $\mathfrak{N}_{48}^{\alpha}$ lies in the closures of the families $\mathfrak{N}_{39}^{\beta}$, $\mathfrak{N}_{47}^{\beta}$, $\mathfrak{N}_{50}^{\beta}$.
\end{enumerate}
\end{proposition}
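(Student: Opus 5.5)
The proof splits into three checks for part~1, followed by the closure argument of part~2.

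\emph{Realizability in part~1.} For each listed degeneration I take the parametrized basis from Table~5. I expand the products of the member $\mu^{\alpha(t)}$ in that basis and verify that every structure constant is a Laurent polynomial in $t$ whose limit at $t=0$ is the corresponding constant of the target. This is routine and is the same computation certified in Lean. It shows that the target lies in $\overline{\{{\mathcal O}(\mu^{\alpha})\}}$.

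\emph{No single member works in part~1.} I plan to use Lemma~\ref{lem_der}, comparing the derivation dimensions from Table~4.
\begin{itemize}
\item Targets $\mathfrak{N}_{02},\mathfrak{N}_{36},\mathfrak{N}_{55},\mathfrak{N}_{37},\mathfrak{N}_{63},\mathfrak{N}_{03},\mathfrak{N}_{53}$ and the families $\mathfrak{N}_{47}^{\alpha},\mathfrak{N}_{50}^{\alpha}$ all have $\mathfrak{Der}=6$. The sources $\mathfrak{N}_{39},\mathfrak{N}_{47},\mathfrak{N}_{50},\mathfrak{N}_{51}$ have generic $\mathfrak{Der}=6$.
\item Targets $\mathfrak{N}_{30},\mathfrak{N}_{38},\mathfrak{N}_{49},\mathfrak{N}_{61}$ have $\mathfrak{Der}=7$, equal to the generic value for $\mathfrak{N}_{48}^{\alpha}$.
\end{itemize}
A proper degeneration from a single member would therefore force $\mathfrak{Der}(\mu^{\alpha})<\mathfrak{Der}(\lambda)$. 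So it suffices to know that no member has a smaller algebra of derivations than the generic one. Derivation dimension is upper semicontinuous, so the generic value is the minimum over the family; hence ${\rm Der}(\mu^\alpha)\ge{\rm Der}(\lambda)$ for all $\alpha$. The target must also not be isomorphic to any member of the family. Here I would use the invariants of Table~4 together with Remark~\ref{rem:ha}, which lists exactly which algebras coincide with members. For $\mathfrak{N}_{48}^{\alpha}$ the special members $\mathfrak{N}_{48}^{1}$ and $\mathfrak{N}_{48}^{-\frac12}$ have the same or larger $\mathfrak{Der}$, so they cause no trouble.

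The main obstacle is this semicontinuity step: I must check that no exceptional value of a parameter drops $\mathfrak{Der}$ below the generic one. For each family I would compute ${\rm Der}(\mu^\alpha)$ symbolically as the kernel of a linear system depending polynomially on $\alpha$, and confirm that its rank is never larger than the generic rank. For the family targets $\mathfrak{N}_{47}^{\alpha}$ and $\mathfrak{N}_{50}^{\alpha}$ with source $\mathfrak{N}_{51}$, equal generic orbit dimension $19$ again forbids a proper single-member degeneration. The remaining danger is that a target member is itself isomorphic to a member of $\mathfrak{N}_{51}$; this would be excluded by the non-isomorphism statements in Remark~\ref{rem:ha}.

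\emph{Part~2.} The first three degenerations are again verified by substituting the Table~5 bases for the indicated members with parameter $p$ and taking $t\to0$; the same check covers $\mathfrak{N}_{46}\to\mathfrak{N}_{39}^{\alpha}$. For the closure statement, fix one of the source families $\mu^\beta$ and let
\[
S=\{\alpha \mid \mathfrak{N}_{48}^{\alpha}\in\overline{\{{\mathcal O}(\mu^{\beta})\}}\}.
\]
Then $S$ is the preimage of a closed set under the polynomial map $\alpha\mapsto\mathfrak{N}_{48}^{\alpha}$, so it is closed in $\mathbb{C}$. The maps $p\mapsto 1+p$, $p\mapsto -p^2$ and $p\mapsto p^2/(1+p^2)$ each take infinitely many values on the admissible $p$. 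So $S$ is infinite, and being closed it equals $\mathbb{C}$.
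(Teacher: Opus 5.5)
Your plan is the paper's proof: Table~5 gives realizability, Lemma~\ref{lem_der} plus non-isomorphism excludes single members, and the closed-and-infinite argument handles part~2. Your semicontinuity argument correctly gives ${\rm Der}(\mu^\alpha)\ge 6$ (resp.\ $\ge 7$ for $\mathfrak{N}_{48}^{\alpha}$) for every member of a source family. In fact it makes your ``main obstacle'' automatic: $\{\alpha\mid \dim{\rm Der}(\mu^\alpha)\ge k\}$ is closed and contains the dense set of generic members, so no symbolic rank check is needed. The real gap is on the target side.

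\textbf{Family targets.} For $\mathfrak{N}_{51}^{\alpha t-1,b(t)}\to\mathfrak{N}_{47}^{\alpha}$ and $\mathfrak{N}_{51}^{\alpha t,t^{-2}}\to\mathfrak{N}_{50}^{\alpha}$ the claim, as the paper reads it, is per target member: for each $\alpha$, no single member of $\mathfrak{N}_{51}$ degenerates to $\mathfrak{N}_{47}^{\alpha}$ (resp.\ $\mathfrak{N}_{50}^{\alpha}$). Via Lemma~\ref{lem_der} this requires the upper bound ${\rm Der}(\mathfrak{N}_{47}^{\alpha})\le 6$ for \emph{every} $\alpha$. Semicontinuity gives only lower bounds, so a special target member with a larger derivation algebra could be a proper degeneration of a single member with ${\rm Der}=6$. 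Your ``equal generic orbit dimension $19$'' therefore covers only generic targets.

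The paper closes this by asserting that all members of $\mathfrak{N}_{47}^{\alpha}$ and $\mathfrak{N}_{50}^{\alpha}$ have ${\rm Der}=6$, and you need that fact too. It is a short computation. Both algebras are generated by $e_1,e_2$, so a derivation is determined by $D(e_1)=\sum a_je_j$ and $D(e_2)=\sum b_je_j$. For every $\alpha$ the derivation equations force $a_2=b_1=0$, $b_2=a_1$, and one relation fixing $b_4$: $b_4=-\alpha b_3$ for $\mathfrak{N}_{47}^{\alpha}$, and $b_4=-\alpha a_4-b_3$ for $\mathfrak{N}_{50}^{\alpha}$. This leaves exactly $a_1,a_3,a_4,a_5,b_3,b_5$ free.

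\textbf{Non-isomorphism.} This step cannot rest on the invariants of Table~4, because for families those are the invariants of generic members. Several relevant pairs agree on ${\rm Der}$, ${\rm Ann}$ and $\mathbb{A}^2$:
\begin{itemize}
\item $\mathfrak{N}_{49}$ and $\mathfrak{N}_{48}^{\alpha}$;
\item the special members $\mathfrak{N}_{48}^{0}$ and $\mathfrak{N}_{50}^{0}$, which have $2$-dimensional annihilators like $\mathfrak{N}_{61}$ and $\mathfrak{N}_{53}$;
\item $\mathfrak{N}_{47}^{\alpha}$, $\mathfrak{N}_{50}^{\alpha}$ and the members of $\mathfrak{N}_{51}$.
\end{itemize}
Falling back on the pairwise non-isomorphism claimed in Remark~\ref{rem:ha} puts you at the same level as the paper, which simply asserts this step. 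To make it self-contained you need finer invariants of the multiplication on $A/A^2$. For example, the cubic form $x\mapsto x(xx)$ is $c_1^3e_5$ for $\mathfrak{N}_{49}$ and $(2\alpha+1)c_1^2c_2e_5$ for $\mathfrak{N}_{48}^{\alpha}$. For the pairs with $\mathfrak{N}_{51}$, use the pencil of quadrics given by squaring $A/A^2\to A^2/A^3$ together with the product $A/A^2\times A^2/A^3\to A^3$.
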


\begin{proof}
(1) The degenerations are given in Table~5 by a member depending on $t$. In each case
the target has an orbit of dimension at least the dimension of the generic orbit of the family
($19$ for $\mathfrak{N}_{02},\mathfrak{N}_{36},\mathfrak{N}_{55},\mathfrak{N}_{37},\mathfrak{N}_{63},\mathfrak{N}_{03},\mathfrak{N}_{53}$ and for the generic members of
$\mathfrak{N}_{47}^{\alpha}$, $\mathfrak{N}_{50}^{\alpha}$; $18$ for $\mathfrak{N}_{30},\mathfrak{N}_{38},\mathfrak{N}_{49},\mathfrak{N}_{61}$), and the target is not
isomorphic to a member of the family, so Lemma~\ref{lem_der} excludes a degeneration of a single
member (all members of $\mathfrak{N}_{39}^{\alpha}$, $\mathfrak{N}_{47}^{\alpha}$, $\mathfrak{N}_{50}^{\alpha}$, $\mathfrak{N}_{51}^{\alpha,\beta}$ have ${\rm dim}\,{\rm Der}=6$, and
those of $\mathfrak{N}_{48}^{\alpha}$ have ${\rm dim}\,{\rm Der}\ge7$). For $\mathfrak{N}_{51}^{\alpha t-1,\,b(t)}\to\mathfrak{N}_{47}^{\alpha}$ and $\mathfrak{N}_{51}^{\alpha t,\,t^{-2}}\to\mathfrak{N}_{50}^{\alpha}$ the same
argument applies to every member of the target family. (2) See Table~5.
\end{proof}

\begin{corollaryB}\label{ir_N5} The irreducible components of $\mathfrak{NJord}_5$ are

\begin{longtable}{lcl}
$\mathcal{C}_{01}   =\overline{ {\mathcal O}(\mathfrak{N}_{01})  }$&$=$&$ 
\left\{\begin{array}{l}
\mathfrak{N}_{01}, \mathfrak{N}_{02}, \mathfrak{N}_{03}, \mathfrak{N}_{04}, \mathfrak{N}_{05}, \mathfrak{N}_{06}, \mathfrak{N}_{07}, \mathfrak{N}_{08}, \mathfrak{N}_{09}, \mathfrak{N}_{10}, \mathfrak{N}_{11}, \mathfrak{N}_{12}, \\\mathfrak{N}_{13},   \mathfrak{N}_{14}, \mathfrak{N}_{15}, \mathfrak{N}_{16}, \mathfrak{N}_{17}, \mathfrak{N}_{18}, \mathfrak{N}_{19}, \mathfrak{N}_{20}, \mathfrak{N}_{21}, \mathfrak{N}_{22}, \mathfrak{N}_{23}, \mathfrak{N}_{48}^{1}, \mathbb{C}^5
\end{array} \right\};$\\
$\mathcal{C}_{02}  =\overline{ {\mathcal O}(\mathfrak{N}_{45})  }$&$=$&$ 
\left\{\begin{array}{l}
\mathfrak{N}_{06}, \mathfrak{N}_{08}, \mathfrak{N}_{10}, \mathfrak{N}_{11}, \mathfrak{N}_{12}, \mathfrak{N}_{13}, \mathfrak{N}_{14}, \mathfrak{N}_{16}, \mathfrak{N}_{17}, \mathfrak{N}_{18}, \mathfrak{N}_{19}, \mathfrak{N}_{20}, \\\mathfrak{N}_{21},  \mathfrak{N}_{22}, \mathfrak{N}_{23}, \mathfrak{N}_{25}, \mathfrak{N}_{26}, \mathfrak{N}_{27}, \mathfrak{N}_{28}, \mathfrak{N}_{29}, \mathfrak{N}_{30}, \mathfrak{N}_{31}, \mathfrak{N}_{32}, \mathfrak{N}_{33}, \\\mathfrak{N}_{34},  \mathfrak{N}_{35}, \mathfrak{N}_{36}, \mathfrak{N}_{37}, \mathfrak{N}_{38}, \mathfrak{N}_{40}, \mathfrak{N}_{41}, \mathfrak{N}_{42}, \mathfrak{N}_{43}, \mathfrak{N}_{44}, \mathfrak{N}_{45}, \mathbb{C}^5
\end{array} \right\};$\\
$\mathcal{C}_{03}   =\overline{ {\mathcal O}(\mathfrak{N}_{46})  }$&$=$&$ 
\left\{\begin{array}{l}
\mathfrak{N}_{02}, \mathfrak{N}_{04}, \mathfrak{N}_{05}, \mathfrak{N}_{06}, \mathfrak{N}_{07}, \mathfrak{N}_{08}, \mathfrak{N}_{09}, \mathfrak{N}_{10}, \mathfrak{N}_{11}, \mathfrak{N}_{12}, \mathfrak{N}_{13}, \mathfrak{N}_{14}, \\\mathfrak{N}_{15},   \mathfrak{N}_{16}, \mathfrak{N}_{17}, \mathfrak{N}_{18}, \mathfrak{N}_{19}, \mathfrak{N}_{20}, \mathfrak{N}_{21}, \mathfrak{N}_{22}, \mathfrak{N}_{23}, \mathfrak{N}_{25}, \mathfrak{N}_{26}, \mathfrak{N}_{27}, \\\mathfrak{N}_{28},   \mathfrak{N}_{29}, \mathfrak{N}_{30}, \mathfrak{N}_{34}, \mathfrak{N}_{35}, \mathfrak{N}_{36}, \mathfrak{N}_{37}, \mathfrak{N}_{38}, \mathfrak{N}_{39}^{\alpha}, \mathfrak{N}_{46}, \mathfrak{N}_{48}^{\alpha},  \mathfrak{N}_{49}, \\ \mathfrak{N}_{52}, \mathfrak{N}_{53}, \mathfrak{N}_{54}, \mathfrak{N}_{55}, \mathfrak{N}_{56}, \mathfrak{N}_{57}, \mathfrak{N}_{61}, \mathbb{C}^5
\end{array} \right\};$\\
$\mathcal{C}_{04}  =\overline{ \{{\mathcal O}(\mathfrak{N}_{51}^{\alpha,\beta})\}  }$&$=$&$ 
\left\{\begin{array}{l}
\mathfrak{N}_{03}, \mathfrak{N}_{05}, \mathfrak{N}_{07}, \mathfrak{N}_{08}, \mathfrak{N}_{09}, \mathfrak{N}_{10}, \mathfrak{N}_{11}, \mathfrak{N}_{12}, \mathfrak{N}_{13}, \mathfrak{N}_{14}, \mathfrak{N}_{15},   \mathfrak{N}_{16}, \\ \mathfrak{N}_{17},   \mathfrak{N}_{18}, \mathfrak{N}_{19}, \mathfrak{N}_{20}, \mathfrak{N}_{21}, \mathfrak{N}_{22}, \mathfrak{N}_{23}, \mathfrak{N}_{25}, \mathfrak{N}_{26}, \mathfrak{N}_{27},  \mathfrak{N}_{28},  
\mathfrak{N}_{30}, \\ \mathfrak{N}_{37},  \mathfrak{N}_{38}, \mathfrak{N}_{47}^{\alpha}, \mathfrak{N}_{48}^{\alpha},  \mathfrak{N}_{49}, \mathfrak{N}_{50}^{\alpha}, \mathfrak{N}_{51}^{\alpha,\beta}, \mathfrak{N}_{52}, \mathfrak{N}_{53},   \mathfrak{N}_{57}, \mathfrak{N}_{58},\\ \mathfrak{N}_{61},   \mathfrak{N}_{63}, \mathbb{C}^5
\end{array} \right\};$\\
$\mathcal{C}_{05}  =\overline{ {\mathcal O}(\mathfrak{N}_{62})  }$&$=$&$ 
\left\{\begin{array}{l}
\mathfrak{N}_{05}, \mathfrak{N}_{06}, \mathfrak{N}_{07}, \mathfrak{N}_{08}, \mathfrak{N}_{09}, \mathfrak{N}_{10}, \mathfrak{N}_{11}, \mathfrak{N}_{13}, \mathfrak{N}_{14}, \mathfrak{N}_{15}, \mathfrak{N}_{16}, \mathfrak{N}_{17}, \\\mathfrak{N}_{18},  \mathfrak{N}_{19}, \mathfrak{N}_{21}, \mathfrak{N}_{22}, \mathfrak{N}_{23}, \mathfrak{N}_{25}, \mathfrak{N}_{26}, \mathfrak{N}_{27}, \mathfrak{N}_{28}, \mathfrak{N}_{52}, \mathfrak{N}_{53}, \mathfrak{N}_{54}, \\\mathfrak{N}_{55},  \mathfrak{N}_{56}, \mathfrak{N}_{57}, \mathfrak{N}_{58}, \mathfrak{N}_{59}, \mathfrak{N}_{60}, \mathfrak{N}_{61}, \mathfrak{N}_{62}, \mathfrak{N}_{63}, \mathbb{C}^5
\end{array} \right\}.$\\
\end{longtable}

 \noindent In particular, ${\rm dim}\,\mathfrak{NJord}_5=22$, the components $\mathcal{C}_{01}$, $\mathcal{C}_{02}$, $\mathcal{C}_{03}$, $\mathcal{C}_{04}$, $\mathcal{C}_{05}$ have dimensions $20$, $22$, $21$, $21$, $21$, and
${\rm Rig}(\mathfrak{NJord}_5)= \big\{ \mathfrak{N}_{01}, \ \mathfrak{N}_{45}, \ \mathfrak{N}_{46}, \ \mathfrak{N}_{62} \big\}.$
\end{corollaryB}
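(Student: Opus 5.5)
The plan is to read Corollary~B off the graph of Theorem~B together with Proposition~\ref{prop:fam}. Since $\mathfrak{NJord}_5$ is a finite union of orbits and of the five families of orbits, it is the finite union of the closures $\overline{{\mathcal O}(\mu)}$, $\mu$ running over the $65$ objects; for a family the closure $\overline{\{{\mathcal O}(\mu^{\alpha})\}}$ is irreducible, being the closure of the image of the irreducible variety ${\rm GL}(\mathbb{V})\times\mathbb{C}^k$. Hence the irreducible components are exactly the maximal elements among these closures. By Theorem~B, $\mathfrak{A}$ lies in the closure of $\mathfrak{B}$ iff there is a path from $\mathfrak{B}$ to $\mathfrak{A}$, so the maximal elements are the objects with no incoming arrow (solid, magenta dashed or blue dashed). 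Inspecting the graph, these sources are $\mathfrak{N}_{01}$, $\mathfrak{N}_{45}$, $\mathfrak{N}_{46}$, $\mathfrak{N}_{62}$ and the family $\mathfrak{N}_{51}^{\alpha,\beta}$. Every other vertex, for instance $\mathfrak{N}_{39}^{\alpha}$ via $\mathfrak{N}_{46}$, $\mathfrak{N}_{47}^{\alpha}$ and $\mathfrak{N}_{50}^{\alpha}$ via $\mathfrak{N}_{51}$, $\mathfrak{N}_{48}^{\alpha}$ via $\mathfrak{N}_{39}$, has a predecessor. I use Proposition~\ref{prop:fam}(2) here to justify that arrows into $\mathfrak{N}_{48}^{\alpha}$ give containment of the whole family.

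Next I would compute each component as the set of vertices reachable from its source. This is a breadth-first search in the graph: from $\mathfrak{N}_{01}$ one reaches all associative algebras together with $\mathfrak{N}_{48}^{1}$; from $\mathfrak{N}_{45}$, $\mathfrak{N}_{46}$, $\mathfrak{N}_{51}^{\alpha,\beta}$ and $\mathfrak{N}_{62}$ one obtains the listed sets. Each listed inclusion is a chain of arrows from Table~5. The omissions are justified by the completeness statement of Theorem~B, namely the non-degenerations of Table~6, Lemma~\ref{lem_der} and transitivity. For example, $\mathfrak{N}_{01}\notin\mathcal{C}_{02}$ because $\mathfrak{N}_{01}$ has no incoming arrow.

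For the dimensions I take $\dim\overline{{\mathcal O}(\mu)}=25-\dim{\rm Der}(\mu)$ from Table~4. This gives $\mathfrak{N}_{01}\colon 20$, $\mathfrak{N}_{45}\colon 22$, $\mathfrak{N}_{46}\colon 21$ and $\mathfrak{N}_{62}\colon 21$. For the family, the generic orbit has dimension $19$ and there are $2$ parameters. Consecutive members are generically non-isomorphic, up to a finite identification, so the closure has dimension $21$, as stated before Remark~\ref{rem:ha}. Then $\dim\mathfrak{NJord}_5=\max=22$.

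Rigidity means the orbit is open, equivalently its closure is a component. So the rigid algebras are exactly the single-orbit sources $\mathfrak{N}_{01}$, $\mathfrak{N}_{45}$, $\mathfrak{N}_{46}$, $\mathfrak{N}_{62}$. The family component contains no open orbit, since every member has orbit dimension $19<21$.

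The main obstacle is the family dimension $21$. This needs that the map $(\alpha,\beta)\mapsto$ isomorphism class has finite fibres generically. I would prove it by exhibiting two independent rational invariants of $\mathfrak{N}_{51}^{\alpha,\beta}$ under ${\rm GL}_5$, for instance ratios of coefficients of the cubic form $x\mapsto(xx)x$ restricted to $A/A^2$. Failing that, I would compute that the tangent map of ${\rm GL}\times\mathbb{C}^2\to\mathfrak{NJord}_5$ at a generic point has rank $21$.
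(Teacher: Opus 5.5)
Your proposal is correct and is essentially the paper's own (very brief) proof: the components are the closures of the sources $\mathfrak{N}_{01},\mathfrak{N}_{45},\mathfrak{N}_{46},\mathfrak{N}_{62},\mathfrak{N}_{51}^{\alpha,\beta}$ of the graph of Theorem~B, their contents are the reachable vertices, and the dimensions are $25-{\rm dim}\,{\rm Der}$ plus the number of parameters; the paper simply takes for granted the generic finiteness of the fibres of $(\alpha,\beta)\mapsto[\mathfrak{N}_{51}^{\alpha,\beta}]$, which you rightly single out as needing an argument. One caution about your suggested invariant: the ratios of coefficients of the binary cubic $x\mapsto(xx)x$ on $A/A^2$ (here $\beta a^3+a^2b+ab^2+\alpha b^3$) are not invariants by themselves, since generic binary cubics form a single ${\rm GL}_2$-orbit, so you should either use your tangent-rank fallback or first pass to a basis of $A/A^2$ adapted to the canonical pair of lines $\{x\notin A^2\mid xy\in A^3 \mbox{ for some } y\notin A^2\}$, in which these ratios do yield the invariants $\alpha+\beta$ and $\alpha\beta$ (note that $\mathfrak{N}_{51}^{\alpha,\beta}\cong\mathfrak{N}_{51}^{\beta,\alpha}$).
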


\begin{proof}
By Theorem~B, the objects which are not degenerations of any other object are $\mathfrak{N}_{01}$, $\mathfrak{N}_{45}$,
$\mathfrak{N}_{46}$, $\mathfrak{N}_{62}$ and $\mathfrak{N}_{51}^{\alpha,\beta}$; every algebra of $\mathfrak{NJord}_5$ lies in the closure of one of
them, and none of these closures contains another one. The dimensions are $25-{\rm dim}\,{\rm Der}$
(Table~4), plus the number of parameters for the family.
\end{proof}

Comparing the list of $\mathcal{C}_{04}$ with the list of all $65$ objects, we see that the family $\mathfrak{N}_{51}^{\alpha,\beta}$ degenerates to every object of $\mathfrak{NJord}_5$ except the following $25$ objects:
$$
\begin{array}{c}
\mathfrak{N}_{01}, \ \mathfrak{N}_{02}, \ \mathfrak{N}_{04}, \ \mathfrak{N}_{06}, \ \mathfrak{N}_{29}, \ \mathfrak{N}_{31}, \ \mathfrak{N}_{32}, \ \mathfrak{N}_{33}, \ \mathfrak{N}_{34}, \ \mathfrak{N}_{35}, \ \mathfrak{N}_{36}, \ \mathfrak{N}_{39}^{\alpha}, \ \mathfrak{N}_{40},\\
\mathfrak{N}_{41}, \ \mathfrak{N}_{42}, \ \mathfrak{N}_{43}, \ \mathfrak{N}_{44}, \ \mathfrak{N}_{45}, \ \mathfrak{N}_{46}, \ \mathfrak{N}_{54}, \ \mathfrak{N}_{55}, \ \mathfrak{N}_{56}, \ \mathfrak{N}_{59}, \ \mathfrak{N}_{60}, \ \mathfrak{N}_{62}.
\end{array}
$$
Moreover, the commutative associative algebras (the algebras $\mathfrak{N}_{k}$ with $k\le24$ and $\mathfrak{N}_{48}^{1}$) form a closed subvariety of $\mathfrak{NJord}_5$, since associativity is a polynomial condition on the structure constants. This subvariety coincides with the component $\mathcal{C}_{01}=\overline{{\mathcal O}(\mathfrak{N}_{01})}$; in particular, it is irreducible.

\begin{remark}[comparison with the results of Kashuba and Martin]\label{rem:km}
Kashuba and Martin {\rm \cite{km18}} studied the variety $\mathfrak{NJord}_5$ and proved that it has five irreducible
components, the closures of the orbits of the rigid algebras $\varepsilon_1$, $J_{21}$, $J_{22}$,
$J_{40}$ and the closure of the family $J_{27}^{\varepsilon,\varphi}$. In our notation these are
$\mathfrak{N}_{01}$, $\mathfrak{N}_{45}$, $\mathfrak{N}_{46}$, $\mathfrak{N}_{62}$ and $\mathfrak{N}_{51}^{\alpha,\beta}$, so Corollary~B agrees with their result,
including the dimensions $20,22,21,21,21$. Moreover:
\begin{itemize}
\item the invariants of {\rm \cite[Tables~2--3]{km18}} (${\rm dim}\,{\rm Aut}$, ${\rm dim}\,{\rm Ann}$,
${\rm dim}\,\mathbb{A}^2$, and the nilindex, resp. the dimension of the associative center) coincide with ours for
all $63$ normal forms and for $\varepsilon_4=\mathfrak{N}_{48}^{1}$ (for the families: for the generic member; the special member
$\mathfrak{N}_{48}^{-\frac12}$ has ${\rm dim}\,{\rm Aut}=8$);
\item all degenerations used in {\rm \cite[Sections~4--5]{km18}} (in particular $\varepsilon_1\to\varepsilon_i$,
$J_{21}\to J_i$, $J_{22}\to J_{15}^\alpha$, $J_{23}^\beta\to J_{41}$, $J_{27}^{\varepsilon,\varphi}\to
J_{26}^\delta$) belong to the order of Theorem~B; the degeneration $J_{23}^\beta\to J_{41}$ (our
$\mathfrak{N}_{47}^{2t^{-1}}\to\mathfrak{N}_{63}$) and $J_{27}^{\varepsilon,\varphi}\to J_{26}^\delta$ (our
$\mathfrak{N}_{51}^{\alpha t,\,t^{-2}}\to\mathfrak{N}_{50}^{\alpha}$) are, as noted in {\rm \cite{km18}}, degenerations of the closure of the family
only (Proposition~\ref{prop:fam});
\item {\rm \cite[Section~4.2]{km18}} states that the single member $J_{15}^1$ dominates $J_{39}$, i.e.
$\mathfrak{N}_{39}^{1}\to\mathfrak{N}_{61}$. This is also true: in the basis $e_1,e_4,e_2,e_3,e_5$ of $\mathfrak{N}_{39}^{1}$ we
computed the ideal of $\overline{\mathcal{B}\cdot\mathfrak{N}_{39}^{1}}$ by elimination, and $\mathfrak{N}_{61}$ has a basis in
which its structure constants lie in this closure; by Lemma~\ref{main} applied to the closed set $\mathcal{R}=\overline{\mathcal{B}\cdot\mathfrak{N}_{39}^{1}}$ and since $\overline{{\mathcal O}(\mu)}={\rm GL}(\mathbb{V})\cdot\overline{\mathcal{B}\cdot\mu}$, this
gives $\mathfrak{N}_{39}^{1}\to\mathfrak{N}_{61}$. In the graph this degeneration is not primary: it factors
through the family $\mathfrak{N}_{48}^{\alpha}$ (the member $\mathfrak{N}_{39}^{1}$ degenerates to $\mathfrak{N}_{61}$, while no single
member of $\mathfrak{N}_{48}^{\alpha}$ does);
\item the coincidences in the list of {\rm \cite{ha}} found in {\rm \cite[Theorem~4]{km18}} are confirmed
(Remark~\ref{rem:ha});
\item {\rm \cite{km18}} describes only the irreducible components; Theorem~B gives the complete graph of
degenerations, with all primary degenerations and all primary non-degenerations.
\end{itemize}
\end{remark}

\end{document}